\tikzstyle arrowstyle=[scale=1]
\tikzstyle directed=[postaction={decorate,decoration={markings,
    mark=at position .65 with {\arrow[arrowstyle]{stealth}}}}]
\tikzstyle reverse directed=[postaction={decorate,decoration={markings,
    mark=at position .65 with {\arrowreversed[arrowstyle]{stealth};}}}]
\newtheorem{theorem}{Theorem}
\newtheorem{definition}[theorem]{Definition}
\newtheorem{lemma}[theorem]{Lemma}
\newtheorem{proposition}[theorem]{Proposition}
\newtheorem{corollary}[theorem]{Corollary}
\newcommand{\defeq}{\mathrel{\mathop:}=}
\newcommand{\R}{\mathbb{R}}
\newcommand{\N}{\mathbb{N}}
\newcommand{\e}{\varepsilon}
\newcommand{\U}{\bar U}
\newcommand{\I}{\bar I}
\newcommand{\C}{\mathcal C(M)}
\newcommand{\HM}{\mathcal H(M)}
\renewcommand{\O}{\mathcal O}
\newcommand{\T}{\bar T}
\newcommand{\cT}{\mathcal T}
\newcommand{\la}{\lambda}
\newcommand{\subalign}[1]{%
  \vcenter{%
    \Let@ \restore@math@cr \default@tag
    \baselineskip\fontdimen10 \scriptfont\tw@
    \advance\baselineskip\fontdimen12 \scriptfont\tw@
    \lineskip\thr@@\fontdimen8 \scriptfont\thr@@
    \lineskiplimit\lineskip
    \ialign{\hfil$\m@th\scriptscriptstyle##$&$\m@th\scriptscriptstyle{}##$\crcr
      #1\crcr
    }%
  }
}
\begin{document}
\title[Invariant measures for typical continuous maps]{Invariant measures  for typical continuous maps on manifolds}

\author{Eleonora Catsigeras}

\author{Serge Troubetzkoy}

\address{Instituto de Matem\'atica y Estad\'istica ``Prof.\ Ing.\  Rafael Laguardia'' (IMERL), Universidad de la Rep\'ublica, Av.\ Julio Herrera y Reissig 565, C.P. 11300, Montevideo, Uruguay}
\email{eleonora@fing.edu.uy}
\urladdr{http:/fing.edu.uy/{\lower.7ex\hbox{\~{}}}eleonora}

\address{Aix Marseille Univ, CNRS, Centrale Marseille, I2M, Marseille, France}
\address{postal address: I2M, Luminy, Case 907, F-13288 Marseille Cedex 9, France}
\email{serge.troubetzkoy@univ-amu.fr}
\urladdr{http://www.i2m.univ-amu.fr/perso/serge.troubetzkoy/} \date{}

\thanks{We gratefully acknowledge support of the projects Physeco, MATH-AMSud,   "Sistemas Din\'{a}micos" funded by CSIC of  Universidad de la Rep\'{u}blica (Uruguay).
The project leading to this publication has also received funding from Agencia Nacional de Investigaci\'{o}n e Innovaci\'{o}n (ANII) of Uruguay, Excellence Initiative of Aix-Marseille University - A*MIDEX and Excellence Laboratory Archimedes LabEx (ANR-11-LABX-0033), French ``Investissements d'Avenir'' programmes.}

\begin{abstract} We study the invariant measures of typical $C^0$ maps on compact connected manifolds with or without boundary, and also of typical homeomorphisms. We prove that the weak$^*$ closure of the set of ergodic measures
coincides with the weak$^*$ closure of the set of measures supported on periodic orbits and also coincides with
the set of pseudo-physical measures.  Furthermore, we show that this set has empty interior in the set of invariant measures.
\end{abstract}

\maketitle

\section{Introduction}
In this article we study the structure of the invariant measures for typical continuous maps of a $C^1$ compact, connected manifold $M$ of finite dimension $m \geq 1$, with or without boundary.    This generalizes the work in our previous
article \cite{CT} where we studied the case when $M$ is an interval.   The study of the invariant measures for typical maps was
initiated recently by Abdenur and Andersson \cite{AA}. Previously studies of the dynamics of typical maps have concentrated on the  topological properties; see \cite{AHK,AP,H1,H2,Oprocha&als,O,OU,PP,Y} and the references therein.

Let $\mathcal E_f$ denote the set of ergodic, $f$-invariant Borel probability measures,
$\mbox{Per}_f$ the set of invariant measures supported on a single  periodic orbit, and $\mathcal O_f$ denote the set of pseudo-physical measures for $f$, (see subsection \ref{sec-pseudophysical}
for the definition).
We always have $\mbox{Per}_{f} \subset {\mathcal E}_f$.
Let $\C$ denote the set of continuous maps of $M$ to itself and $\HM$ denote the set of homeomorphisms of $M$ to itself.
 We endow these spaces with the $C^0$ topology and
 say that a family of maps in $\C$ (resp.\ in $\HM$) is typical if it contains a countable intersection of open and dense family.
Our main result  is the following theorem:

\begin{theorem}\label{Theorem-main}
If $f$ is   typical   in $\C$
(resp.\ $\HM$), then
$$ \overline{ \mathcal E_f }  =\overline{ \mbox{Per}_f} =  \mathcal O_f. $$

\end{theorem}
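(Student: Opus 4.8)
The proof splits into the two inclusions $\overline{\mathrm{Per}_f} \subseteq \overline{\mathcal E_f} \subseteq \mathcal O_f$ (valid for every $f$, or at least generically) together with the reverse inclusion $\mathcal O_f \subseteq \overline{\mathrm{Per}_f}$, which is where genericity really bites. The first inclusion $\mathrm{Per}_f \subseteq \mathcal E_f$ is immediate, so $\overline{\mathrm{Per}_f}\subseteq\overline{\mathcal E_f}$; this costs nothing. For $\overline{\mathcal E_f}\subseteq\mathcal O_f$ I would first check that $\mathcal O_f$ is always weak$^*$ closed (this should follow directly from the definition of pseudo-physical measure in subsection~\ref{sec-pseudophysical}, since the defining condition — positive ``basin'' in the appropriate $\e$-sense for every $\e>0$ — passes to weak$^*$ limits), and then show every ergodic measure is pseudo-physical. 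The latter is a soft ergodic-theory fact: by the Birkhoff ergodic theorem, for an ergodic $\mu$ the set of points whose empirical measures converge to $\mu$ has full $\mu$-measure, hence in particular its $\e$-neighborhood has positive Lebesgue measure for every $\e$, which is exactly pseudo-physicality. These two steps are not generic and not hard.

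The real content is $\mathcal O_f \subseteq \overline{\mathrm{Per}_f}$ for typical $f$. The strategy is to build a countable intersection of open dense sets on which every pseudo-physical measure can be weak$^*$-approximated by measures on periodic orbits. Fix a countable weak$^*$-dense set $\{\mu_k\}$ in the space of invariant measures, or better, a countable basis $\{V_n\}$ of the (metrizable, compact) space $\mathcal M(M)$ of all Borel probability measures, together with a sequence $\e_j \downarrow 0$. For each triple $(n,j)$ one wants the open dense family of maps $f$ such that: \emph{if} some pseudo-physical measure of $f$ lies in $V_n$, \emph{then} $V_n$ also contains a measure supported on a periodic orbit of $f$. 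Density of such a family is the crux: given an arbitrary $f$ and a pseudo-physical measure $\mu \in V_n$, one uses the fact that $\mu$ has an $\e$-pseudo-orbit-like behavior on a positive Lebesgue measure set (the defining property of $\mathcal O_f$) to find a long, genuine orbit segment of $f$ whose empirical measure is close to $\mu$; then a $C^0$-small perturbation of $f$ — supported in tiny balls around the endpoints of this segment, closing it up into a genuine periodic orbit (an Anosov-style closing argument, but trivial in the $C^0$ category since one only needs continuity, not hyperbolicity) — produces a nearby map with a periodic orbit whose invariant measure is still in $V_n$. Openness is then a routine consequence of the fact that a periodic orbit of given period persists under $C^0$-small perturbations up to small motion, keeping its empirical measure inside the open set $V_n$; for homeomorphisms one must carry out the perturbation within $\HM$, which is why the closing perturbation should be realized as a small homeomorphism isotopic to the identity, supported near the relevant balls.

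I would then intersect over all $(n,j)$ to get the generic set $\mathcal G \subseteq \C$ (resp.\ $\HM$); for $f \in \mathcal G$, any $\mu \in \mathcal O_f$ and any neighborhood $V_n \ni \mu$ contains a periodic-orbit measure of $f$, so $\mu \in \overline{\mathrm{Per}_f}$. Combined with the first half this gives the chain of inclusions, and since $\overline{\mathrm{Per}_f} \subseteq \overline{\mathcal E_f} \subseteq \mathcal O_f \subseteq \overline{\mathrm{Per}_f}$ all three sets coincide. The main obstacle is unquestionably the density step: one must be careful that the positive-Lebesgue-measure condition in the definition of pseudo-physical measure genuinely yields \emph{long honest orbit segments} (not merely pseudo-orbits) realizing the measure $\mu$ up to weak$^*$ error, so that a single localized $C^0$ perturbation suffices to periodize them; this is presumably where the manifold structure (so that small balls and bump-perturbations are available, including the dimension-$1$-boundary cases) and the results from \cite{CT,AA} are invoked. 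A secondary technical point is to make all perturbations respect the homeomorphism constraint and the boundary of $M$ simultaneously.
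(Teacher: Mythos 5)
Your decomposition into the chain $\overline{\mathrm{Per}_f}\subseteq\overline{\mathcal E_f}\subseteq\mathcal O_f\subseteq\overline{\mathrm{Per}_f}$ is reasonable, and the first inclusion is indeed trivial, but the proposal has a fatal gap in the middle step. You claim that $\mathcal E_f\subseteq\mathcal O_f$ is ``a soft ergodic-theory fact'' holding for \emph{every} $f$, arguing via Birkhoff that the set of $\mu$-generic points has full $\mu$-measure and ``hence in particular its $\e$-neighborhood has positive Lebesgue measure for every $\e$.'' This does not follow: $A_\e(\mu)=\{x:d(p\omega(x),\mu)<\e\}$ is \emph{not} the $\e$-neighborhood in $M$ of the $\mu$-generic set --- points that are spatially close to $\mu$-generic points can have wildly different empirical measures. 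A set of full $\mu$-measure can perfectly well have Lebesgue measure zero, and there is simply no general implication from $\mu$-genericity to pseudo-physicality. Indeed $\mathcal E_f\subseteq\mathcal O_f$ is \emph{false} for arbitrary $f$ (think of an isolated repelling periodic orbit, whose periodic measure is ergodic but whose basin of attraction in the pseudo-physical sense is a finite set). This inclusion is in fact the hardest part of the paper's proof, occupying all of Section~\ref{sec6} (Theorems~\ref{theoremBigcapClosShr_q&SRBl-l} and~\ref{TheoremSRB-l=ClosureErgodic}, the ``good $q,r$-covering'' machinery, and Lemma~\ref{lemmaP_qrIsDense}) and requiring a genuine perturbation argument to establish $\mathrm{Per}_f\subseteq\mathcal O_f$ for typical $f$.

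Your approach to $\mathcal O_f\subseteq\overline{\mathrm{Per}_f}$ also has problems. First, the paper's remark after Corollary~\ref{corollaryOprocha&als} explicitly states that it is \emph{not known} whether typical $C^0$ maps satisfy the closing lemma, so an ``Anosov-style closing argument, but trivial in the $C^0$ category'' cannot be taken for granted; the paper instead uses the generic periodic shadowing property of \cite{Oprocha&als,PP} (for the inclusion $\mathcal E_f\subseteq\overline{\mathrm{Per}_f}$) and the shrinking-set machinery of Theorem~\ref{theoremLebesgue-a-e-isInShrinkingInterval} (for $\mathcal O_f\subseteq\overline{\mathrm{Per}_f}$, Theorem~\ref{TheoremPeriodicSRB-l}). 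Second, your proposed open-dense condition ``if some pseudo-physical measure lies in $V_n$ then $V_n$ contains a periodic measure'' is problematic for openness because the assignment $f\mapsto\mathcal O_f$ is not upper semicontinuous; a nearby $g$ may acquire a pseudo-physical measure in $V_n$ that $f$ did not have, breaking the implication. The paper sidesteps this by never quantifying over $\mathcal O_f$ in the definition of the generic sets, instead phrasing the generic conditions purely in terms of shrinking sets and coverings of $\{f^r=x\}$, which do persist under $C^0$ perturbation.
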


The following questions arise from  Theorem \ref{Theorem-main}.   Are all the invariant measures  in the closure of the ergodic measures? Are  all the invariant measures   pseudo-physical? We prove that the answer is negative for $C^0$-typical maps:

\begin{theorem}\label{Theorem-main2}
If $f$ is  typical  in $\C$  or if $f$ is   typical    in $\HM$, then the set
$  \mathcal O_f $  has empty interior in the set of all $f$-invariant measures.\end{theorem}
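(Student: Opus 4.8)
The plan is to combine Theorem~\ref{Theorem-main} with a localized $C^0$-perturbation. Since $\mathcal O_f=\overline{\mbox{Per}_f}$ for typical $f$, it suffices to show that $\overline{\mbox{Per}_f}$ has empty interior in the compact metrizable convex set $\mathcal M_f$ of all $f$-invariant probability measures. As $\mbox{Per}_f\subset\overline{\mbox{Per}_f}$, a routine diagonal argument reduces this further to the claim that, for typical $f$, \emph{every periodic measure $\mu_O$ is a weak$^*$-limit of invariant measures that do not lie in $\overline{\mbox{Per}_f}$}: indeed, each $\mu\in\overline{\mbox{Per}_f}$ is then a limit of periodic measures, each of which is a limit of measures outside $\overline{\mbox{Per}_f}$, so $\overline{\mbox{Per}_f}$ lies in the closure of its complement and is nowhere dense.

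The key ingredient I would establish is a genericity statement: for $f$ in a suitable residual subset of $\C$ (resp.\ of $\HM$), every periodic orbit $O$ of $f$, of period $k$, admits for every $\rho>0$ an open forward-invariant neighbourhood $V$ (that is, $f(\overline V)\subset V$) contained in the $\rho$-neighbourhood of $O$ whose maximal invariant set $\Lambda_V:=\bigcap_{n\ge0}f^n(\overline V)$ contains exactly two periodic orbits, namely $O$ and one further orbit $O'$, necessarily of period $k$ and within Hausdorff distance $\rho$ of $O$. To produce $V$ I would make a $C^0$-small surgery supported in a thin tube $B_0\sqcup\dots\sqcup B_{k-1}$ around the points of $O$: reroute $f$ so that $f(\overline{B_i})\subset B_{i+1}$ cyclically and the first-return map $f^k|_{B_0}$ becomes a model map carrying $\overline{B_0}$ into $B_0$ with exactly two fixed points, an attractor $p\in O$ and a repeller $p'\in O'$, and no other periodic points; any periodic orbit of $f$ inside $V$ then cycles through the $B_i$ with period a multiple of $k$ and meets $B_0$ in a single point fixed by $f^k|_{B_0}$, hence equals $O$ or $O'$. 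In the homeomorphism case one takes the model to be a homeomorphism of $\overline{B_0}$ onto its image, e.g.\ a contraction with a single interior repelling fixed point, so that grafting it in preserves injectivity. Since the tube and the images of its pieces have small diameter, the perturbation is $C^0$-small, which gives density; openness of the relevant families, together with a countable exhaustion over $k$ and over $\rho\to0$, would yield the residual set.

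Granting this, I would conclude as follows. Given $\mu_O$ and $\e>0$, pick $\rho$ so small that $d(\mu_{O'},\mu_O)<\e$ for every period-$k$ orbit $O'$ within Hausdorff distance $\rho$ of $O$ (possible since the $f^i$ are continuous and $k$ is fixed), take $V$, $\Lambda_V$, $O'$ as above, and set $\nu:=\tfrac12\mu_O+\tfrac12\mu_{O'}$. Then $\nu$ is $f$-invariant, supported in $\Lambda_V\subset V$, and $d(\nu,\mu_O)<\e$ since $\nu$ lies on the segment from $\mu_O$ to $\mu_{O'}$. To see $\nu\notin\overline{\mbox{Per}_f}$, suppose $\mu_{q_n}\to\nu$ with $\mu_{q_n}$ carried by a periodic orbit $O_{q_n}$. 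Since $V$ is open and $\nu(V)=1$, the Portmanteau theorem gives $\mu_{q_n}(V)\to1$, so $O_{q_n}$ meets $V$ for large $n$; as $V$ is forward-invariant and $O_{q_n}$ is a periodic orbit, all of $O_{q_n}$ then lies in $V$, hence in $\Lambda_V$, so $\mu_{q_n}\in\{\mu_O,\mu_{O'}\}$. A convergent sequence in this two-point set is eventually constant, so $\nu\in\{\mu_O,\mu_{O'}\}$, which is absurd because $\nu$ gives mass $\tfrac12$ to each of the disjoint orbits $O$ and $O'$. Hence $\nu\notin\overline{\mbox{Per}_f}$, and letting $\e\to0$ exhibits $\mu_O$ as a weak$^*$-limit of such measures.

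The hard part will be the genericity statement of the second paragraph, and within it the openness of the families: in the $C^0$ category periodic orbits are not robust, so one cannot simply follow the orbit $O$ through a perturbation and must instead argue scale by scale, tracking the finitely many orbits of low period that are visible at resolution $\rho$ and controlling both their fate and the recurrence inside the candidate trapping regions under perturbation. This is the technical heart, for which I would adapt the $C^0$-genericity machinery of \cite{CT} (the interval case) and \cite{AA}. The reduction in the first paragraph and the measure-theoretic computation in the third are routine in comparison.
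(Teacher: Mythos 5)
Your reduction in the first paragraph and the Portmanteau/convexity computation in the third paragraph are both sound. The problem is the genericity claim in the second paragraph, and it is not a small technical matter that can be deferred: it is a gap that your approach, as formulated, cannot cross.

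The property ``$\Lambda_V=\bigcap_{n\ge 0}f^n(\overline V)$ contains exactly two periodic orbits'' (or even ``finitely many periodic orbits'') is simply not $C^0$-open. You can certainly make the perturbation $g=f\circ h$ so that $g^k|_{\overline B_0}$ is a contraction with a single interior fixed point; but a $C^0$-small change of $g$ can replace that attracting fixed point by a ball of fixed points, or by a Cantor set of periodic points, or by a sequence of periodic orbits of unbounded period cycling between neighbourhoods of $p$ and $p'$. Any of these destroys your conclusion that $\mu_{q_n}\in\{\mu_O,\mu_{O'}\}$. You note this yourself (``periodic orbits are not robust'') and propose a ``scale by scale'' tracking of the low-period orbits, but this is not worked out, and there is a structural reason to doubt it: the quantity you need to control is a \emph{count} of periodic orbits inside a trapping region, and counts are notoriously fragile under $C^0$-perturbation. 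The machinery of the paper (and of \cite{CT}, \cite{AA}) is built around properties that \emph{are} $C^0$-robust, precisely to avoid this: the relation $f^p(\overline I)\subset I$ with $I$ open and $\overline I$ compact persists under small perturbation, whereas ``$\Lambda_V$ has two periodic orbits'' does not.

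The paper's actual route sidesteps the orbit count entirely. It does not try to forbid periodic orbits in a trapping region; instead it combines three robust ingredients. First, Theorem \ref{theoremLebesgue-a-e-isInShrinkingInterval} gives, for typical $f$, a dense set $AA_1$ whose elements $x$ have $\mu_x$ supported on the orbit $K$ of an arbitrarily small periodic shrinking set. Second, Theorem \ref{TheoremAddedNonUniqErgodic} gives a second ergodic measure $\nu$, and because $\nu$ is ergodic and $K$ is forward-invariant, Lemma \ref{lemmaInvMeasOnShrinkingIntervals} forces $\nu(K)\in\{0,1\}$; choosing the shrinking set small enough (via Lemma \ref{lemma-dist}) rules out $\nu(K)=1$, so $\nu(K)=0$. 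Third, Lemma \ref{Lemma-ConvexCombination} shows that any strict convex combination $\lambda\mu_{x_0}+(1-\lambda)\nu$ gives the bump function $\psi$ an integral strictly between $0$ and $1/p$, whereas for every $x\in AA$ the value $\int\psi\,d\mu_x$ is either $0$ or exactly $1/p$ (because the orbit of $x$ either never enters $I$ or eventually falls into $K$ and stays). This dichotomy, not an orbit count, is what makes the convex combination non--pseudo-physical; and letting $\lambda\to 1^-$ yields measures converging to $\mu_{x_0}$, which can be taken arbitrarily close to an arbitrary $\mu\in\mathcal O_f$ by Proposition \ref{propositionAA_1}. Your idea of perturbing the convex combination off $\overline{\mathrm{Per}_f}$ is in the right spirit, but you should exclude it from $\mathcal O_f$ directly via the $\{0,1\}$-valuedness of $\mu_x(K)$ on $AA$, rather than via a periodic-orbit census inside $\Lambda_V$, which is not a $C^0$-generic property.
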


In particular an open and dense set of $f$-invariant measures is not in the closure of the ergodic measures.
Theorem \ref{Theorem-main2} implies that the typical behaviour of homeomorphisms on manifolds widely differ from the typical behaviour of $C^1$ diffeomorphisms. In fact,
Gelfert and Kwietniak proved that for $C^1$-typical diffeomorphisms the set of ergodic measures is dense in the space of invariant measures \cite[Theorem 8.1]{Gelfert-K}.

\vspace{.3cm}

The proof of Theorem \ref{Theorem-main} is split  into several pieces; its main components are  Theorems \ref{TheoremPeriodicSRB-l},
 \ref{theoremBigcapClosShr_q&SRBl-l}, \ref{TheoremSRB-l=ClosureErgodic} and
Corollary \ref{corollaryOprocha&als}.
The main components of the proof of Theorem \ref{Theorem-main2} are Theorems \ref{theoremLebesgue-a-e-isInShrinkingInterval} and \ref{TheoremAddedNonUniqErgodic}, Proposition \ref{propositionAA_1}, and Lemma \ref{Lemma-ConvexCombination}.

{The article is organized as follows.  In Section \ref{sec2} we give some background material and prove a technical lemma on the approximation of measures. In Section \ref{sec3} we define the notion of shrinking sets, and
we prove an extension of  a result of Abdenur and Andersson, as well as some consequences of this result.
We prove Theorem \ref{Theorem-main2} in Section \ref{sec4}. Finally,
Sections \ref{sec5} and \ref{sec6} are dedicated to the proof of
 Theorem \ref{Theorem-main}.}

\section{The set-up}\label{sec2}
Let $M$ be a compact, connected, $C^1$ manifold of finite dimension $m \ge 1$, with or without boundary.
  Once  a Riemannian structure is chosen on $M$, it defines a volume measure  which we will refer to as the Lebesgue measure. It also defines a distance,  $dist(\cdot,\cdot)$, between points. We denote  the $C^0$ distance on $C(M)$ by
  $$\rho(f,g) :=\max_{x\in M} dist (f(x), g(x)),$$
 {This metric makes the set $C(M)$ a complete metric space and generates the $C^0$ topology on $C(M)$.
 Similarly, $\HM$ is a complete  metric space with the $C^0$ topology generated by  the distance  $$\rho_{H} :=\max\{\rho(f,g),\rho(f^{-1},g^{-1}) \}.$$
}

The results of Theorems  \ref{Theorem-main} and \ref{Theorem-main2} hold independently of the choice of the Remannian structure on $M$. In fact,
the topological and Borel-measurable properties of the system   depend  on the  metrizable topology of the manifold $M$, but not on the particular choice
of its metric $dist$.  The properties related to  the pseudo-physical measures   depend only on the set of   pseudo-physical measures inside
the space of all the Borel invariant measures. But  this set  is preserved if  we substitute  the previously chosen Lebesgue measure by any finite Borel measure
equivalent to it (see Definition \ref{definitionpseudo-physical}). Hence, the results remain unchanged if we change the choice of  the volume form.

\subsection{Pseudo-physical measures}\label{sec-pseudophysical}

For any point $x \in M$ and $f \in C(M)$,  let  $p\omega(x)$  be the set of the Borel probability measures on $M$
that are the limits in the weak$^*$ topology of the convergent
subsequences of the sequence
$$\left \{  \frac1n \sum_{j=0}^{n-1} \delta_{f^jx} \right\}_{n \in \N}$$
where $\delta_y$ is the Dirac probability measure supported in $y \in M$.

 A measure $\mu$  is called \emph{physical} if the set of those $x \in M$ for which
$p\omega(x) = \mu$ has positive Lebesgue measure. Note that we do not require physical measures to be ergodic.

In this article we consider a generalization of the above definition, introduced in \cite{CE1} and studied in the $C^1$ case in \cite{CE2}, \cite{CCE1}, \cite{CCE2}.
We fix a distance, $d(\cdot,\cdot)$,  in the space of probability measures that endows  the weak$^*$ topology. It is easy to check that the following definition does not depend on the choice of this distance (see \cite{CE1}).

\begin{definition} \em \label{definitionpseudo-physical}
A probability measure $\mu$ is called \em pseudo-physical  \em if for all $\e > 0$ the set $A_\e(\mu) \defeq \{x \in M: d(p\omega(x),\mu) <  \nolinebreak
\e\}$ has positive Lebesgue
measure.  We denote by $\O_f$   the set of  pseudo-physical measures for $f$.

Note that $\O_f$ is always closed and non-empty, and that any pseudo-physical measure is automatically $f$-invariant, and   we do not require a pseudo-physical measure to be ergodic. (In \cite{CE1,CE2,CCE1} pseudo-physical measures were called SRB-like.)
\end{definition}

\subsection{The simplexes }\label{sec-setup}

We consider an   atlas $\{(U_\alpha,\phi_\alpha)\}$ of the manifold $M$, where $U_\alpha$ are open sets whose union covers $M$, and
$\phi_\alpha : U_\alpha \mapsto \R^m$ are $C^1$ diffeomorphisms.

An Euclidean \emph{$k$-simplex} ($0 \le k \le m$) is the convex hull of a finite set $\{x_0, x_1,\dots,x_k\} \subset \R^m$ such that $\{x_j - x_0: 1 \le j \le k\}$ are
linearly independent.

A \em $k$-simplex \em of $M$ is a nonempty compact set $\T$ contained in a chart $(U_{\alpha},\phi_{\alpha})$ such that $\phi_\alpha(\T)$ is  an Eucliean $k$-simplex of $\R^m$.   Throughout the article $T$ will denote the nonempty interior of an $m$-simplex $\overline{T}$.

$M$ is \em triangulable \em if there exits a finite family $\{\T_1, \dots , \T_\ell\}$  of $m$-simplexes (called a {\em triangulation}) such that $T_i \cap T_j = \emptyset$ if $i \ne j$ and $\cup_i \T_i = M$.

It is well known that any compact $C^1$ manifold is triangulable \cite{W},\cite{M}.
For any $\e > 0$ there exists a triangulation of $M$ such that the diameters of all the simplexes $\T_i$ are smaller than $\e$. To prove this it suffices to notice that
an Euclidean $m$-simplex can be decomposed into a finite number of $m$-simplexes of arbitrarily small diameter.

Consider a triangulation $\cT := \cT_m := \{ \T^m_1,\dots, \T^m_{h_m}\}$ of $M$.
Define
$\partial \cT$ to be  the union of the topological boundaries  $\partial \bar T$  of the simplexes $\bar T$ of  $\cT$.

Consider an  $m$-simplex $\T$ of $M$ and its associated chart $(U_\alpha,\phi_\alpha)$.  Let $ c $ be a point in the interior of $ \T$, which we will call \lq\lq centroid\rq\rq. Let $\{x_0, x_1,\dots,x_m\} \subset \R^m$ be the vertices of  $\phi_{\alpha}(\T)$. Up to translation of the coordinates, it is not restrictive to assume that
   $\phi_{\alpha}(c)$ of $\phi_{\alpha}(\T)$ is located at the origin. Suppose $\lambda > 0$ is not too large. We denote $\lambda \T  \subset U_\alpha$ the new simplex such that $\phi_\alpha (\lambda \T)$ is the Euclidean $m$-simplex with
vertices $\{\lambda x_0, \lambda x_1,\dots,  \lambda x_m\} \subset \phi_\alpha(\U_\alpha)$. Clearly, there exists a $\lambda_{0} > 1$ (depending on $\bar T$  and the chosen centroid $c$) such that $\lambda \bar T$
is well defined for all $\lambda \in (0,\lambda_{0})$.

\begin{figure}[h]
\begin{tikzpicture}
\centering
\draw[thin] (-1,-1) -- (1,-1) -- (0,0.73) -- (-1,-1);
\draw[densely dotted] (-0.45,-0.5) -- (0.55,-0.5) -- (0.05,0.36) -- (-0.45,-0.5) ;
\draw[black,fill=black] (0.1,0) circle (.2ex);
\end{tikzpicture}
\caption{The simplex $\bar T$ is drawn with a solid line, while the simplex $\lambda \bar T$ for a $\lambda =1/2$ is drawn with a dotted line. The solid dot   is
the chosen centroid of the triangle located at the point $c \in T = \mbox{int}(\bar T) \subset U_\alpha$.}\label{fig-centroid}
\end{figure}
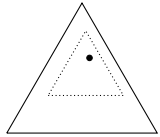

\subsection{A technical lemma}\label{s2}

In the metrizable space of probability measures endowed with the weak$^*$ topology, consider the following distance between probabilities $\mu, \nu$:
$$d(\mu, \nu) := \sum_{i=1}^{\infty} \frac{1}{2^i}\left |    \int \Psi_i \, d\mu - \int \Psi_i \, d\nu \right |, $$
where $\{\Psi_i\}_{i=1}^{\infty}$ is a countable dense family in the space $\mathcal{C}^0(M,[0,1])$.

We will use the following lemma  on the approximation of measures.

\begin{lemma}\label{lemma-dist}
For any   $\e >0 $, there exists  $q \geq 1$ such that,
if $\mu$ and $\nu$ are  probability measures of $M$ satisfying
$supp (\nu) \cup supp (\mu) \subset \cup_{j=1}^m I_j$ for some pairwise disjoint connected closed sets,  if $\mbox{diam}(I_j) \leq 1/q$, and if
$|\nu(I_j) - \mu(I_j) | \le 1/qm$ for each $j$,
then $d(\mu, \nu) < \e$.
\end{lemma}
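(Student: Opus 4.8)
The plan is the standard three-step approximation argument for the weak$^*$ metric $d$: first truncate the defining series, then exploit uniform continuity of the finitely many test functions that survive the truncation, and finally estimate each surviving integral by replacing $\Psi_i$ by its value at a single point of each piece $I_j$. The role of the hypotheses is precisely that $q$ is produced first, depending only on $\e$; the per-piece bound $|\mu(I_j)-\nu(I_j)|\le 1/(qm)$, summed over the $m$ pieces, yields a total mass discrepancy of at most $1/q$, uniformly in $\mu$ and $\nu$.

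\emph{Choice of $q$.} Given $\e>0$: since $0\le\Psi_i\le 1$ and $\mu,\nu$ are probability measures, $\bigl|\int\Psi_i\,d\mu-\int\Psi_i\,d\nu\bigr|\le 1$ for all $i$, so the tail satisfies $\sum_{i>N}2^{-i}\bigl|\int\Psi_i\,d\mu-\int\Psi_i\,d\nu\bigr|\le 2^{-N}$; fix $N$ with $2^{-N}<\e/2$. The functions $\Psi_1,\dots,\Psi_N$ are continuous on the compact manifold $M$, hence uniformly continuous, so there is $\de>0$ such that $dist(x,y)<\de$ implies $|\Psi_i(x)-\Psi_i(y)|<\e/8$ for every $i\le N$. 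Now pick any $q\ge 1$ with $1/q<\min\{\de,\e/4\}$; this $q$ depends only on $\e$.

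\emph{The estimate.} Let $\mu,\nu$ and the sets $I_1,\dots,I_m$ be as in the statement for this $q$; only pairwise disjointness (and Borel measurability, which follows from closedness) is used, so connectedness plays no role, and any empty $I_j$ may be discarded. Since $supp(\mu)\subset\bigcup_j I_j$ and the $I_j$ are pairwise disjoint, $\mu$ is concentrated on $\bigcup_j I_j$, so $\int\Psi_i\,d\mu=\sum_{j=1}^m\int_{I_j}\Psi_i\,d\mu$ and $\sum_j\mu(I_j)=1$, and likewise for $\nu$. Choose $y_j\in I_j$. Fix $i\le N$. As $\mbox{diam}(I_j)\le 1/q<\de$, every $x\in I_j$ obeys $|\Psi_i(x)-\Psi_i(y_j)|<\e/8$, hence $\bigl|\int\Psi_i\,d\mu-\sum_j\Psi_i(y_j)\mu(I_j)\bigr|\le\frac{\e}{8}\sum_j\mu(I_j)=\frac{\e}{8}$, and the same estimate holds with $\nu$ in place of $\mu$; moreover, using $0\le\Psi_i\le 1$, $\bigl|\sum_j\Psi_i(y_j)\mu(I_j)-\sum_j\Psi_i(y_j)\nu(I_j)\bigr|\le\sum_{j=1}^m|\mu(I_j)-\nu(I_j)|\le m\cdot\frac1{qm}=\frac1q<\frac{\e}{4}$. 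Adding the three estimates gives $\bigl|\int\Psi_i\,d\mu-\int\Psi_i\,d\nu\bigr|<\e/2$ for all $i\le N$, so $\sum_{i\le N}2^{-i}\bigl|\int\Psi_i\,d\mu-\int\Psi_i\,d\nu\bigr|<\e/2$, and combined with the tail bound $2^{-N}<\e/2$ we get $d(\mu,\nu)<\e$.

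There is no genuine obstacle; the two points to be careful about are the order of the quantifiers — $q$ must be manufactured from $\e$ alone, before $\mu,\nu$ and the $I_j$ are seen — and the arithmetic of the error terms ($\e/8+\e/8+\e/4$ together with the tail $\e/2$) so that they sum to strictly less than $\e$.
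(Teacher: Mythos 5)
Your proof is correct and follows the same overall strategy as the paper's: truncate the defining series at a finite index $N$ chosen so that the tail is below $\e/2$, use uniform continuity of the surviving test functions $\Psi_1,\dots,\Psi_N$ to fix $q$ in advance of $\mu,\nu$, and then estimate each $\bigl|\int\Psi_i\,d\mu-\int\Psi_i\,d\nu\bigr|$ by comparing the two integrals via the discretized sums $\sum_j\Psi_i(\cdot)\,\mu(I_j)$. The one place you diverge is in how the discretization is justified: the paper invokes the mean value theorem for integrals, producing exact equalities $\int_{I_j}\Psi_i\,d\mu=\Psi_i(x_j)\mu(I_j)$, which requires each $I_j$ to be connected so that $\Psi_i(I_j)$ is an interval; you instead pick an arbitrary $y_j\in I_j$ and bound the error $\bigl|\int_{I_j}\Psi_i\,d\mu-\Psi_i(y_j)\mu(I_j)\bigr|\le\frac{\e}{8}\mu(I_j)$ directly from uniform continuity, which, as you correctly observe, makes the connectedness hypothesis superfluous. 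This is a modest but genuine simplification; since every application of the lemma in the paper involves connected sets (iterated images of closed simplexes), it does not change any downstream argument, but your version is marginally more general and a touch more elementary. The quantifier order and the arithmetic of the error terms are handled correctly.
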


\begin{proof}
 Fix $n \geq 1$ such that $\sum_{i= n+1} ^{+ \infty} 2^{-i} < {\e}/{2}.$ Then, for any pair of probability measures $\mu, \nu$ we obtain
 \begin{equation}
 \label{eqn01}
 d (\mu, \nu) < \frac{\e}{2} +  \sum_{i= 1}^n   \frac{1}{2^i} \left |    \int_M \Psi_i \, d\mu - \int_M \Psi_i \, d\nu \right |.
 \end{equation}
The uniform continuity of the finite family of functions $\{\Psi_i\}_{1 \leq i \leq n}$, implies that there exists $\delta > 0$ such that, if  $dist(x_1,x_2) < \delta$, then $dist(\Psi_i(x_1) , \Psi_i(x_2))< \e/4$ for all $1 \leq i \leq n$.
Fix $q  \in \mathbb{N}^+$  such that
  $ {1}/{q} <  \min (\delta, \e/4) $.

The mean value theorem for integrals, yields for all $1 \leq i \leq n$
\begin{equation*}
  \begin{split}
   \int_{\bigcup_{j=1}^m I_j}\Psi_i \, d\mu &= \sum _{j= 1} ^m \int_{I_j} \Psi_i  \, d \mu = \sum _{j= 1} ^m\Psi_i(x_j) \mu (I_j) \mbox{ for some } x_j \in I_j,\\
    \int_{\bigcup_{j=1}^m I_j}\Psi_i \, d\nu & = \sum _{j= 1} ^m \int_{I_j} \Psi_i  \, d \nu = \sum _{j= 1} ^m\Psi_i(x'_j) \nu (I_j) \mbox{ for some } x'_j \in I_j.
  \end{split}
  \end{equation*}
\nopagebreak[4]
From this we deduce
$$
 \int_{\bigcup_{j=1}^m I_j}\Psi_i \, d\mu - \int_{\bigcup_{j=1}^m I_j} \Psi_i \, d\nu     = \sum _{j= 1} ^m \left (\Psi_i(x_j) \mu (I_j) -\Psi_i(x'_j) \nu (I_j) \right ) $$ $$
     =\sum _{j= 1} ^m  \left (\Psi_i(x_j) (\mu (I_j) -  \nu (I_j) ) +  (\Psi_i(x_j)   -\Psi_i(x'_j) ) \nu (I_j) \right ).
$$
Since  $|\nu(I_j) - \mu(I_j) | \le 1/qm$ for each $j$, we conclude
$$ \left |\int_M\Psi_i \, d\mu - \int_M \Psi_i \, d\nu \right | \leq \sum _{j= 1} ^m  \frac{1}{qm}    +  \frac{\e}{4} \sum _{j= 1} ^m   \nu (I_j) < \e/2 \ \ \ \forall \ 1 \leq i \leq n.$$
Substituting this inequality  in (\ref{eqn01}), finishes the proof of Lemma \ref{lemma-dist}.
\end{proof}

\section{Shrinking sets}\label{sec3}
 A  \em periodic  shrinking set of period $p \geq 1$,  \em is a nonempty open set $I$ whose closure $\I$ is an $m$-simplex, such that  $\{f^j( \I)\}_{0 \leq j \leq p-1}$ is a family of pairwise disjoint sets,  $f^p(\I) \subset I$, and   $\mbox{diam} (f^j(I) ) <  \mbox{diam}(I)$ for all $1 \leq j \leq p-1$.

An nonempty open set $J$, whose closure $\bar J$ is an $m$-simplex,  is \em eventually periodic  shrinking,  \em if there exists a periodic  shrinking set  $I$ and a {\em transience time}
$n \in \N^+ $ such that  $ f^n(J) \subset I$, and $\mbox{diam}(f^j(J)) <  \mbox{diam}(J)$ for all $1 \leq j \leq n-1$.

Note that the same periodic or eventually periodic  shrinking set   for some $f \in \C$ (resp.\ $\HM$)  is also a periodic or eventually periodic  shrinking set  for all $g$ in a small neighborhood of $f$ in $\C$ (resp.\ $\HM$).

If $I$ is a periodic or eventually periodic shrinking set  with period $p$, then,
by the Brouwer fixed point theorem, the map $f^p$ has a fixed point $x_0 \in I$; hence, the point  $x_0$  is  periodic for $f$  with period $p$.

\begin{theorem} \label{theoremLebesgue-a-e-isInShrinkingInterval}
For a typical map  $f \in \C$ (resp.\ $f \in \HM)$,  Lebesgue a.e.\ point $x \in M$ belongs to a sequence  $\{I_q\}_{q \in {\mathbb N}^+}$ of  eventually periodic  shrinking sets $I_q$ such that   $\mbox{diam}(I_q) < 1/q$.

\end{theorem}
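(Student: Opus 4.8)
The plan is to realize the required typical family as an explicit countable intersection of open dense families, so that the real content becomes a perturbation (density) statement. For $q\in\N^+$ and $\de>0$ let $W_q(g)\subset M$ be the union of all eventually periodic shrinking sets of $g$ of diameter $<1/q$ (an open set), and set $\mathcal A_{q,\de}:=\{g:\mathrm{Leb}(M\setminus W_q(g))<\de\}$ in $\C$ (resp.\ in $\HM$). If $g\in\mathcal A_{q,\de}$, then since $M$ is second countable $W_q(g)$ is a countable union of eventually periodic shrinking sets, so finitely many of them, $I_1,\dots,I_N$, already satisfy $\mathrm{Leb}(\bigcup_s I_s)>\mathrm{Leb}(M)-\de$; by the persistence remark of Section \ref{sec3} each $I_s$ remains eventually periodic shrinking (of the same diameter) for all maps in a neighbourhood of $g$, so $\mathcal A_{q,\de}$ is open. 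Granting density of each $\mathcal A_{q,\de}$, the set $\bigcap_{q,n}\mathcal A_{q,1/n}$ is a dense $G_\de$ and any $f$ in it has $\mathrm{Leb}(M\setminus W_q(f))=0$ for every $q$, hence $\mathrm{Leb}(M\setminus\bigcap_qW_q(f))=0$, which is precisely the conclusion. So it suffices to show: given $f$, $q$, $\de$ and $\e>0$, there is $g$ with $\rho(f,g)<\e$ (resp.\ $\rho_{H}(f,g)<\e$) lying in $\mathcal A_{q,\de}$.

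\textbf{The combinatorial skeleton of the perturbation.} I would fix a triangulation $\cT=\{\T_1,\dots,\T_h\}$ so fine that $\mathrm{diam}(\T_i)<1/q$, that the star of each simplex lies in one chart, and that $\mathrm{diam}(f(\T_i))$ (and, in the homeomorphism case, $\mathrm{diam}(f^{-1}(\T_i))$) is far below $\e$, which uniform continuity allows. Fix $\la<1$ close to $1$ and let $T_i':=\la\T_i$ be the scaled "core" with centroid $c_i$; the $\overline{T_i'}$ are $m$-simplexes of diameter $<1/q$ with pairwise disjoint closures, and once $\la$ is close enough to $1$ one has $\mathrm{Leb}(M\setminus\bigcup_iT_i')<\de$. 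Since $\bigcup_iT_i'$ is dense in $M$, after nudging each $c_i$ inside $\mathrm{int}(T_i')$ I can pick, for each $i$, an index $\sigma(i)$ with $f(c_i)\in\mathrm{int}(T_{\sigma(i)}')$. The map $\sigma:\{1,\dots,h\}\to\{1,\dots,h\}$ is eventually periodic on every orbit, and the goal is to build $g$ so that each core $\overline{T_i'}$ is an eventually periodic shrinking set whose target periodic shrinking set is the core of the $\sigma$-cycle reached from $i$; then $\bigcup_iT_i'\subset W_q(g)$, so $g\in\mathcal A_{q,\de}$.

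\textbf{Building $g$.} In $\C$: let $g$ be the constant $c_i$ on each $\overline{T_i'}$ and, on the "mortar" neighbourhood of the $(m-1)$-skeleton, a geodesic convex combination (via a partition of unity) of the $c_i$ attached to the adjacent cores; by the choice of mesh all those $c_i$ lie within $\e$ of $f(x)$, so $\rho(f,g)<\e$. Then $g^{\,j}(\overline{T_i'})=\{c_{\sigma^{j-1}(i)}\}\subset\mathrm{int}(T_{\sigma^{j}(i)}')$ for all $j\ge1$, so diameters drop to $0$; and if $j_1\to\cdots\to j_p\to j_1$ is the cycle reached from $i$, then $I:=\mathrm{int}(T_{j_1}')$ is a periodic shrinking set of period $p$ (its successive images are the distinct points $c_{j_1},\dots,c_{j_{p-1}}$, lying in the pairwise disjoint cores $\mathrm{int}(T_{j_2}'),\dots,\mathrm{int}(T_{j_p}')$, all disjoint from $\overline{T_{j_1}'}$, with $g^p(\overline I)=\{c_{j_p}\}\subset I$ and zero intermediate diameters), and $\overline{T_i'}$ is eventually periodic shrinking with target $I$. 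In $\HM$ collapsing is impossible, so instead I fix a tiny $\eta>0$ and, for each $i$, a Euclidean simplex $\widetilde T_i$ of diameter $<\eta$ with $f(c_i)\in\mathrm{int}(\widetilde T_i)$, $\widetilde T_i\subset\mathrm{int}(T_{\sigma(i)}')$, and $V_i:=f^{-1}(\widetilde T_i)\subset\mathrm{int}(T_i')$; then I take $g:=f\circ\phi$, where $\phi$ is a self-homeomorphism of $M$, $C^0$-close to the identity, mapping each $\overline{T_i'}$ onto $V_i$ and absorbing the mortar by stretching collars of $\partial T_i'$ over the thin shells $T_i'\setminus V_i$. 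One has $g(\overline{T_i'})=\widetilde T_i$ and $g$ carries $\mathrm{int}(T_j')$ homeomorphically onto $\mathrm{int}(\widetilde T_j)$, hence $g^{\,j}(\overline{T_i'})\subset\widetilde T_{\sigma^{j-1}(i)}\subset\mathrm{int}(T_{\sigma^{j}(i)}')$ has diameter $<\eta<\mathrm{diam}(\overline{T_i'})$ for every $j\ge1$, and the same combinatorial argument produces the periodic shrinking set at the $\sigma$-cycle and shows $\overline{T_i'}$ is eventually periodic shrinking. Controlling $\rho_H(f,g)$ reduces to $\rho(\mathrm{id},\phi)$ and to the modulus of continuity of $f^{-1}$, so this is $<\e$ if mesh and $\eta$ are small.

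\textbf{Where the difficulty lies.} The routine parts are the measure estimate $\mathrm{Leb}(M\setminus\bigcup_iT_i')\to0$ as $\la\to1$ and the partition-of-unity interpolation in the continuous case (one must keep the convex combinations inside $M$, harmless near $\partial M$ for $\e$ small). The real work is the homeomorphism case: one must produce the core-shrinking map $\phi$ as a genuine self-homeomorphism of $M$ that is $C^0$-small \emph{together with its inverse}, performing the collar-stretching coherently around all spheres $\partial T_i'$ at once and respecting $\partial M$, while the chart bookkeeping is delicate since the target core $T_{\sigma(i)}'$ sits near $f(c_i)$, far from $T_i'$ in general. I expect this construction of $\phi$, rather than the dynamical verification, to be the main obstacle.
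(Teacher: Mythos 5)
Your open-dense reduction is the same as the paper's, and your construction in the $\C$ case (collapsing each core $\overline{T_i'}$ to a point and interpolating across the mortar with a partition of unity) is a genuinely different route from the paper's, which never collapses anything; modulo the index slip (you want $g \equiv c_{\sigma(i)}$ on $\overline{T_i'}$, not $g \equiv c_i$, else $c_i$ is a $g$-fixed point and $g^j(\overline{T_i'}) = \{c_i\}$ for all $j$) it is plausible. You do, however, gloss over the point that for a mere $f \in \C$ one cannot always nudge $c_i$ so that $f(c_i)$ avoids $\partial\mathcal T$ (e.g.\ $f$ constant onto a skeleton point); the paper handles exactly this by first replacing $f$ with a nearby $f_1$ that sends the chosen points into simplex interiors.

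The real gap, which you yourself flag, is the $\HM$ case. Demanding a self-homeomorphism $\phi$ of $M$, close to the identity \emph{together with its inverse}, that carries each $\overline{T_i'}$ \emph{onto} the prescribed preimage $V_i = f^{-1}(\widetilde T_i)$ and stretches the collar of $\partial T_i'$ over $T_i'\setminus V_i$, is far more than the problem requires and runs into genuine topology (in higher dimensions one is essentially invoking the annulus theorem to re-parametrize $T_i\setminus V_i$, and one must also do this coherently at every simplex and at $\partial M$). You do not construct $\phi$, and the proof does not close. The paper sidesteps all of this by making the perturbation $h$ a simple \emph{radial} contraction $(s,\theta)\mapsto(s^n,\theta)$ towards the point $x_i\in T_i$ chosen so that $f(x_i)\in T_{j(i)}$; $h$ is explicitly a homeomorphism fixing $\partial\mathcal T$, both $h$ and $h^{-1}$ are $\delta$-close to the identity since each simplex has diameter $<\delta$, and one never needs $h$ to hit a prescribed target set — increasing $n$ squeezes $\lambda_1 T_i$ into an arbitrarily small ball about $x_i$, which $f$ (by uniform continuity) then sends inside $\lambda_1 T_{j(i)}$. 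That single construction also handles $\C$, so the paper's argument is both more uniform and free of the topological obstruction you correctly identify as the bottleneck of your approach.
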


\begin{proof}
For  given natural numbers $q,k \in \mathbb{N}^+$, denote by ${\mathcal S}_{q,k}$  the set of maps in $\C$ (resp.\ $\HM$), for which there exists a  finite family of nonempty open sets,
which we denote by $\{  I_ {1}, \ldots,   I_l\}$,     such that:

\noindent (i)  $diam(I_i) < 1/q$ for   $i= 1, 2, \ldots, l$,

\noindent (ii) $I_i$ is a periodic or eventually periodic shrinking set,

\noindent (iii)  $Leb (M \setminus \bigcup_{i= 1}^l I_i) < 1/k$.

The set ${\mathcal S}_{q,k}$ is open   in $\C$ (resp.\ $\HM$) since, for each $f \in {\mathcal S}_{q,k}$, the same family of such shrinking sets of $f$, is also a family of shrinking sets satisfying conditions (i), (ii), (iii) for any other map $g$ in a sufficiently small neighborhood of $f$.

Now, let us prove that  ${\mathcal S}_{q,k}$ is dense in $\C$ (resp.\ $\HM$). This will complete the proof
of Theorem \ref{theoremLebesgue-a-e-isInShrinkingInterval}, since the set
$${\mathcal S} \defeq \bigcap_{q \ge 1}\bigcap_{k  \ge 1} {\mathcal S}_{q,k}$$
is a dense $G_\delta$-set, and by  construction of ${\mathcal S}_{q,k}$, any map in ${\mathcal S}$ satisfies the conclusion of the  Theorem.

To prove the denseness of ${\mathcal S}_{q,k}$
fix any map $f \in \C$ (resp.\ $\HM)$. For any   $\e >0$, we will construct  a perturbation $g \in {\mathcal S}_{q,k}$ such that $\rho(f,g) <\e$  (resp.\  $\rho_{H} < \e$).

Choose  $0<\delta  < \min\{\e, 1/q\}$  such that $$dist(x,y) < \delta \ \ \Rightarrow \ \ dist(f(x) , f(y))< \frac{\e}{3},$$ and if besides $f \in \HM$, then also $ dist(f^{-1}(x), f^{-1}(y)) < \frac{\e}{3}$.
Consider a triangulation $\mathcal T := \{\overline{T}_1,\dots,\overline{T}_l \}$ of $M$ such that the diameters of all the simplexes $\overline T_i$ are smaller than $\delta$.
For each $i$ we would like to choose a point  $x_i \in T_i = \mbox{int}(\bar T_i)$ such that  $f(x_i) \in T_{j(i)} = \mbox{int}( \bar T_{j(i)})$ for some $j$ which depends on $x_i$.
 If $f$ is an homeomorphism the set of points in $T_i$ whose image is not in $\partial \mathcal T$ is
a non-empty open set, and so we can choose such points $x_i$.  If $f$ is merely continuous it is possible that such points do not exist. Then, we
perturb $f$ to a continuous map $f_1 \in \C$ satisfying this property such that $\rho(f,f_1) < \e/{12}$ and such that
$dist(x ,y) < \delta \ \Rightarrow dist(f_1(x) , f_1(y)) < \e/2$.  For the sake of consistency of notation,
if $f \in \HM$  or if $f \in \C$ already satisfied this requirement we set $f_1 := f$.

Now we define ``spherical coordinates'' on each $\T_i$. To do this consider a chart $(U_\alpha,\phi_\alpha)$ such that $\T_i \subset U_\alpha$.
Consider $S_i := \phi_\alpha(\T_i)$. Up to  a translation of the coordinates, it is not restrictive to assume that $0= \phi_\alpha(x_i)$.
Recall that $x \in T_i = \mbox{int}(\overline T_i)$; hence
      $0 \in \mbox{int}(S_i)$. Therefore, there exists
a small solid $m$-dimensional ball $B(0,r_0)$  lying  inside $S_i$, and the spherical coordinate
system $(r,\theta): r\in [0,r_0], \theta \in \mathbb S^m$ in it.  Using the convexity of $S_i$ we can extend this coordinate system to all of $S_i$ by defining
$R(\theta)$ to be the supremum of all $r$ such that the ray  in the direction $\theta$ lies inside $S_i$ up to the distance $r$.
For simplicity of notation, we normalize the $r$ coordinate, by setting $s := r/R(\theta)$, thus $s \in [0,1]$ for all $\theta$.
Finally we  pull back these
coordinates to $\T_i$ via the map $\phi^{-1}_\alpha$.
For any $i$ and any $s_0 \in (0,1]$ we will denote
$$B^i_S :=  \{(s,\theta) \in \T_i: s \in [0,s_0]  \}.$$

We consider an integer number $n \geq 1$ to be specified later.  We define an homeomorphism $h$ of $M$ as follows.
$h: \T_i \mapsto \T_i$ is given by
  by setting
  $$h(s,\theta) \defeq  ( s^{n}  ,\theta  ).$$
Note that the map $h$ is uniquely defined on $\partial \mathcal T$, because for all $i$ the definition of $h$ on $\partial \T _i$ yields the identity map.
Furthermore, note that since $h$ and $h^{-1}$ map each simplex $\T_i$ to itself, and the diameters of the $\T_i$ are at most $\delta$ we have
$$dist(x,h(x)) \le  \delta \quad \mbox{ and } \quad dist(x,h^{-1}(x)) \le \delta.$$
The   map $g$ is then defined by
$$g = f_1 \circ h.$$
We claim that, for an adequate choice of $n$, the map $g$ has the desired properties, i.e., $g \in {\mathcal S}_{q,k}$ and $\rho(g,f) < \e$ (and $\rho(g^{-1},f^{-1}) < \e$ in the homeomorphism case).

Consider $y \in M$. Since $dist(h(y),y) < \delta$ we have
$$dist(g(y),f_1(y)) = dist(f_1(h(y)),f_1(y)) < \frac{\e}{2}$$
and thus
$$\rho(f,g) < \rho(f,f_1) + \rho(f_1,g) < \e.$$

In the case that $f = f_1 \in \HM$, we have  $g^{-1} = h^{-1} \circ f_1^{-1}$.
Consider $y \in M$, let  $z = f^{-1}(y) = f_1^{-1}(y)$ , then
$$dist(g^{-1}(y),f ^{-1}(y))  =dist(g^{-1}(y),f _1^{-1}(y)) =  dist(h^{-1}(z),z) \le \delta < \e$$
and thus
$$\rho(g^{-1} , f^{-1}) < \e.$$

Now, to end the proof of Theorem \ref{theoremLebesgue-a-e-isInShrinkingInterval}, it is enough to choose $n$ such that the map $g$ above constructed belongs to ${\mathcal S}_{q,k}$.

Choose $x_i$ as the centroid of $T_i$,   choose $\lambda_1 \in (0,1)$ close to 1,  and $\lambda_2 \in (0, \lambda_1)$ close to 0, and construct the simplexes $\lambda_2 \bar T_i \subset \lambda_1 \bar T_i \subset T_i = \mbox{int}(\bar T_i)$  whose interiors contain $x_i$.  If $\lambda_1$ is close enough  1,  we have:
\begin{equation}\label{iii}
Leb\big(M \setminus (\cup_i \lambda_1 \bar T_i)\big) < 1/k.
\end{equation}
Besides, if $\lambda_1 \in (0,1)$ is close enough 1, then $f_1(x_i) \in \mbox{int}(\lambda_1 \bar T_{j(i)})$. So, if $\lambda_2 \in (0,\lambda_1)$ is small enough, we have  $f(x_i) \in f_1(\lambda_2 \T_i) \subset \mbox{int}(\lambda_1 \T_{j(i)}) $ with $\mbox{diam}  (f_1(\lambda_2 \T_i)) <\min_j{\mbox{diam}(\lambda_1 \bar T_j)} $ for all $i$.

We claim that there exists $n$ such that   the sets $\lambda_1 T_i$ are eventually periodic  shrinking sets for $g$. In fact, choose a real number $s_1 > 0$ such that $\lambda_1 \T_i \subset B^i_{s_1}$ and
 choose other real number $s_2 \in (0,s_1)$ such that
 $B^i_{s_2}  \subset \lambda_2 \T_i .$
Finally choose $n$ such that $s_1^{n} < s_2$. With these choices
we have $h(B^i_{s_1}) \subset B^i_{s_2}$
and thus
$$g(\lambda_1 \T_i) = f \circ h (\lambda_1 \T_i) \subset
f \circ h(B^i_{s_1}) \subset f(B^i_{s_2}) \subset f(\lambda_2 \T_i) \subset  {\lambda_1}  T_{j(i)}.$$
Since there exists only a finite number $l$ of simplexes in the triangulation, this assertion implies that $\lambda_1T_i$ is an  eventually periodic shrinking set with the sum of the transience time plus the period bounded by $l$. By construction, Properties (i), (ii) and (iii) are satisfied.   \end{proof}

When  $M$ is a manifold without boundary, the following result was proven by Abdenur and Andersson in \cite{AA} for typical $f \in \C$ in any finite dimension and for typical $f \in \HM$ in dimension at least two.
Our proof works also when $M$ is a manifold with boundary, and also for $f \in \mathcal{H} (M)$ in dimension one.

\begin{corollary}
   \label{theoremAAa)}

\noindent Let $f $ be a typical map in $C(M)$ (resp.\ $\HM$).  Then,
for Lebesgue almost every point $x \in M$, the  sequence $$\Big \{\frac{1} {n}  \sum_{j=0}^{n-1} \delta_{f^j(x)}\Big\} _{n \in \mathbb{N}^+}$$ of empirical probabilities is convergent,
\end{corollary}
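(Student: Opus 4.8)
The plan is to deduce this statement directly from Theorem \ref{theoremLebesgue-a-e-isInShrinkingInterval}, by showing that whenever a point $x$ belongs to an eventually periodic shrinking set, its sequence of empirical probabilities converges. So first I would fix a typical $f$ (namely one lying in the residual set $\mathcal S$ produced in the proof of Theorem \ref{theoremLebesgue-a-e-isInShrinkingInterval}, so that Lebesgue a.e.\ $x$ lies in some eventually periodic shrinking set $I$), and fix such a point $x$ together with an eventually periodic shrinking set $I$ containing it, with associated periodic shrinking set $J$ of period $p$ and transience time $n$, so that $f^n(x) \in J$ and $f^p(\bar J) \subset J$.

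The core observation is that the nested sequence $\bar J \supset f^p(\bar J) \supset f^{2p}(\bar J) \supset \cdots$ of compact sets has an intersection $K := \bigcap_{m \ge 0} f^{mp}(\bar J)$ which is nonempty, compact, and $f^p$-invariant, with $\mathrm{diam}(f^{mp}(\bar J)) \to 0$. (The diameter shrinking needs an argument: a priori only $\mathrm{diam}(f^j(J)) < \mathrm{diam}(J)$ for $1 \le j \le p-1$, not geometric decay. The way to get it is to note that on the compact invariant nested family, $f^p$ maps $\bar J$ into its own interior region up to iterates, so $\max_{0\le j \le p-1}\mathrm{diam}(f^{mp+j}(\bar J))$ is non-increasing in $m$ and, if it did not tend to $0$, one would extract a nontrivial compact $f^p$-invariant subset inside $J$ on which $f^p$ is a homeomorphism with $f^p(\bar J) \subset J$, contradicting strict contraction of diameters at the intermediate steps — alternatively, and more cleanly, observe $K$ is a single point because any two points of $K$ stay within all the $f^j(\bar J)$ forever, and strict diameter decrease at step $p$ rules out a nondegenerate continuum; I would spell this out carefully as it is the one genuinely delicate point.) Granting $K = \{z_0\}$, the point $z_0$ is a periodic orbit of period (dividing) $p$, and for every $y \in J$ one has $\mathrm{dist}(f^{mp}(y), z_0) \to 0$, hence $\mathrm{dist}(f^{mp+j}(y), f^j(z_0)) \to 0$ uniformly in $j$ by continuity.

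Now I would conclude: for our point $x$, since $f^n(x) \in J$, the forward orbit of $x$ is asymptotic to the periodic orbit $\{z_0, f(z_0), \dots, f^{p-1}(z_0)\}$, i.e.\ $\mathrm{dist}(f^i(x), f^{i-n}(z_0)) \to 0$ as $i \to \infty$. A standard estimate then shows that $\frac1N \sum_{i=0}^{N-1}\delta_{f^i(x)}$ converges in the weak$^*$ topology to $\mu_{z_0} := \frac1p \sum_{j=0}^{p-1}\delta_{f^j(z_0)}$: for any fixed continuous $\varphi$, $\big|\frac1N\sum \varphi(f^i(x)) - \frac1N\sum \varphi(f^{i-n}(z_0))\big| \to 0$ by uniform continuity of $\varphi$ and the asymptotics, while $\frac1N \sum_{i} \varphi(f^{i-n}(z_0))$ converges to $\int \varphi\, d\mu_{z_0}$ by periodicity. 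Hence $p\omega(x) = \{\mu_{z_0}\}$, so the empirical sequence converges. Since this holds for Lebesgue a.e.\ $x$, the corollary follows.

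The main obstacle is the diameter-shrinking/convergence-to-a-point step: the definition of periodic shrinking set only gives $f^p(\bar J) \subset J$ plus strict inequality of diameters at the \emph{intermediate} iterates $1,\dots,p-1$, not an obvious contraction factor, so proving $\bigcap_m f^{mp}(\bar J)$ is a single point (equivalently, that the diameters go to zero) requires a compactness argument rather than a one-line estimate. Everything after that — periodicity of $z_0$, asymptoticity of the orbit of $x$, and the passage from orbit-asymptoticity to convergence of Birkhoff averages of Dirac masses — is routine.
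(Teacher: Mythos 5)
Your plan rests on the claim that $K := \bigcap_{m\ge 0} f^{mp}(\bar J)$ is a single point, and this claim is false in general for a periodic shrinking set. The definition only requires $f^p(\bar J)\subset J$ together with $\mathrm{diam}(f^j(J))<\mathrm{diam}(J)$ for $1\le j\le p-1$; there is no strict decrease asserted at step $p$, so the phrase ``strict diameter decrease at step $p$'' in your argument invokes a hypothesis that is not there. Even if one did have $\mathrm{diam}(f^p(J))<\mathrm{diam}(J)$, this still gives no geometric contraction: $f^p$ restricted to $\bar J$ can perfectly well fix a sub-simplex, or carry an attracting invariant circle on which $f^p$ acts as a rotation, in which case $K$ is a nondegenerate continuum and the orbit of $x$ does not converge to a periodic orbit. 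Your fallback compactness sketch (``extract a nontrivial compact $f^p$-invariant subset\dots contradicting strict contraction of diameters at the intermediate steps'') does not produce a contradiction either, since the intermediate-step inequalities say nothing about invariant subsets of $J$ under $f^p$. So the key step you flagged as delicate is in fact unprovable from the definitions, and the rest of the argument (asymptoticity of the orbit of $x$ to a periodic orbit, passage to convergence of empirical measures) is built on this unsupported claim.

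The paper's proof works at a strictly weaker level and is worth contrasting: it never attempts to show the orbit converges to anything. Instead it fixes a continuous test function $\varphi$, an $\e>0$, and the corresponding modulus of continuity $\delta$, and then selects from the sequence $\{I_q\}$ (whose existence is the content of Theorem \ref{theoremLebesgue-a-e-isInShrinkingInterval}) a periodic shrinking set of diameter $<1/q<\delta$ containing an iterate of $x$. The only dynamical fact used is that $f^{\ell p+r}(x)$ and $f^r(x)$ both lie in $f^r(\bar I_q)$, hence are $\delta$-close, which makes the Birkhoff averages of $\varphi$ along the orbit $\e$-shadow those of a genuinely $p$-periodic sequence. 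Letting $\e\to 0$ (which forces $q\to\infty$ and a fresh, smaller shrinking set each time) gives that the limsup and liminf of the Birkhoff averages coincide. Crucially, the refinement in the approximation comes from moving to smaller shrinking sets in $q$, not from iterating a fixed one as in your sketch. If you want to repair your proof, replacing the single-shrinking-set contraction argument by this sequence-of-shrinking-sets shadowing argument is essentially forced.
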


\begin{proof}
Consider the families ${\mathcal S}_{q,k}$ for $q,k \in \mathbb{N}^+$, defined at the beginning of the proof of Theorem \ref{theoremLebesgue-a-e-isInShrinkingInterval}. We proved that each ${\mathcal S}_{q,k}$ is open and dense, so typical $f$ belong to $\mathcal S:= \bigcap_{q,k \geq1} {\mathcal S}_{q,k}$. Thus, it is enough that the assertion of this corollary holds for all $f \in {\mathcal S}$.
 Consider a continuous function $\phi: M \to \R$. Since $M$ is compact, for every $\e > 0$ we can find a $\delta > 0$ such that if $dist(x,y) < \delta$ then
 $|\phi(x) - \phi(y)| < \e$.
Suppose $f  \in {\mathcal S}_{q,k}$ with  $q$ such that $1/q < \delta$.  Due to Theorem \ref{theoremLebesgue-a-e-isInShrinkingInterval}, Lebesgue a.e.\ belongs to an eventually periodic shrinking set of diameter smaller than $1/q$.
It is enough to prove the convergence of the corollary for any iterate of $x$ instead of $x$; thus we can assume that $x \in I_i$ where $I_i$ is a periodic   shrinking set.
Denote the period of $I_i$ by $p$. Then for $n > p$ we write $n = \ell p + r$ with $0 \le r < p$. Since $I_i$ is a  periodic shrinking set with diameter smaller than $1/q$ we have
$dist(f^{\ell p + r}x, f^rx) \leq \mbox{diam}f^r(I_i) \leq \mbox{diam}(I_i) < 1/q <\delta$. Thus

\begin{equation}\label{eqcont}\left  |\sum_{j=0}^{\ell p - 1} \phi(f^j x) - \ell \sum_{j=0}^{ p - 1} \phi(f^j x) \right  | < \ell \e.\end{equation}
Then
\begin{equation}\label{eqconv}
\begin{split}
\Big | \frac1n \sum_{j=0}^{n-1} \phi(f^j x) & - \frac{1}{p} \sum_{j=0}^{p-1} \phi(f^j x) \Big | \le \\
& \Big | \frac1n \sum_{j=0}^{n-1} \phi(f^j x)  - \frac{1}{\ell p} \sum_{j=0}^{\ell p-1} \phi(f^j x) \Big | +\\
&\Big | \frac{1}{ \ell p} \sum_{j=0}^{\ell p-1} \phi(f^j x) - \frac1p \sum_{j=0}^{\ell p - 1} \phi(f^j x) \Big |.
\end{split}
\end{equation}

Using  \eqref{eqcont} we see that second term is bounded by $\e/p < \e$.
Using the triangle inequality once more we can bound the first term by
$$ \Big | \frac1n \sum_{j=\ell p}^{n-1} \phi(f^j x)  \Big |+ \Big |  \frac{r}{n \ell p} \sum_{j=0}^{\ell p-1} \phi(f^j x) \Big |. $$
Since $r$ is bounded, for $n$ sufficiently large this is bounded by $\e$, and  the difference in \eqref{eqconv} is bounded by $2\e$.

We deduce that
$$\limsup \frac1n \sum_{j=0}^{n-1} \phi(f^j x)-  \liminf \frac1n \sum_{j=0}^{n-1} \phi(f^j x) < 4\e.$$
Since $\e > 0$ is arbitrary, the result follows.
\end{proof}

A map $f \in \C$ is \em Lebesgue-a.e.~strongly non positively expansive, \em  if for any real number $\alpha >0$,   and for Lebesgue a.e.~$x \in M$,
$$Leb\Big (\{ y \in M : dist(f^n(x), f^n(y))< \alpha \ \  \forall \ n \in  \mathbb{N}  \}   \Big) >0 .$$
So, according to \cite[Definition 2.1]{AM}, the Lebesgue measure is non positively expansive. Moreover, using the notation of  \cite[Section 2]{AM}: $$Leb(\Phi_{\alpha}(x)) \neq 0 \   Leb\mbox{-a.e.  } x \in M, \mbox{ where}$$
$$\Phi_{\alpha}(x):=\{y \in M: \mbox{dist}(f^n(y), f^n(x)) < \alpha.$$ Namely, the positive expansivity of the Lebesgue measure fails not only in some points, but   Lebesgue a.e..

\begin{corollary} \label{corollaryStronglyNonExpansive}
Typical  maps in $\C$ (resp.\ $f \in \HM$)  are Lebesgue-a.e.\ strongly non positively expansive.
\end{corollary}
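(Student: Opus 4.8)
The plan is to derive Corollary~\ref{corollaryStronglyNonExpansive} directly from Theorem~\ref{theoremLebesgue-a-e-isInShrinkingInterval}, in much the same spirit as the proof of Corollary~\ref{theoremAAa)}. Fix a typical $f$, so that $f \in \mathcal S = \bigcap_{q,k\ge 1}\mathcal S_{q,k}$, and fix $\alpha > 0$. We must show that for Lebesgue a.e.\ $x \in M$ the set $\{y \in M: dist(f^n(x),f^n(y)) < \alpha \ \forall n \in \N\}$ has positive Lebesgue measure. The idea is that an eventually periodic shrinking set of small enough diameter is exactly such a ``dynamical ball'': points in it stay uniformly close under all iterates.

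First I would make the choice of scale precise. By uniform continuity of each of the finitely many relevant iterates — more carefully, since the shrinking set becomes invariant after finitely many steps, one only needs uniform continuity of $f^0,\dots,f^{N}$ where $N$ bounds the transience time plus the period — pick $q$ large enough that $1/q < \alpha$ and, in fact, so that $\mbox{diam}(f^j(I)) < \alpha$ for all $j$ whenever $I$ is an eventually periodic shrinking set of diameter $< 1/q$. This last point is where one must be a little careful: for $j$ between $1$ and the transience time $n-1$ we already have $\mbox{diam}(f^j(I)) < \mbox{diam}(I) < 1/q$ by definition, once $f^n(I) \subset I'$ with $I'$ periodic of period $p$ we have $\mbox{diam}(f^j(I)) \le \mbox{diam}(I') < 1/q$ for all larger $j$ by the shrinking property composed with periodicity, so in fact $\mbox{diam}(f^j(I)) < 1/q < \alpha$ for \emph{all} $j \ge 1$, and trivially for $j = 0$.

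Next, invoke Theorem~\ref{theoremLebesgue-a-e-isInShrinkingInterval} with this $q$: Lebesgue a.e.\ $x \in M$ belongs to an eventually periodic shrinking set $I_q = I_q(x)$ with $\mbox{diam}(I_q) < 1/q$. For such an $x$, and for any $y \in I_q$, both $f^n(x)$ and $f^n(y)$ lie in $f^n(I_q)$, whose diameter is $< \alpha$ by the previous paragraph; hence $dist(f^n(x),f^n(y)) < \alpha$ for all $n \in \N$. Therefore $I_q \subset \{y : dist(f^n(x),f^n(y)) < \alpha \ \forall n\}$, and since $I_q$ is a nonempty open set it has positive Lebesgue measure. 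This holds for Lebesgue a.e.\ $x$, which is exactly the definition of $f$ being Lebesgue-a.e.\ strongly non positively expansive.

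I do not expect a serious obstacle here; the only point needing attention is the uniform bound $\mbox{diam}(f^n(I)) < \alpha$ for all $n$, which as noted above follows cleanly from the definition of an eventually periodic shrinking set together with uniform continuity of the finitely many iterates up to the first return, so no genuine difficulty arises. One should also remark that the homeomorphism case $f \in \HM$ is handled by the identical argument, since Theorem~\ref{theoremLebesgue-a-e-isInShrinkingInterval} is stated for both $\C$ and $\HM$.

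\begin{proof}
Let $f \in \mathcal S = \bigcap_{q,k\ge 1}\mathcal S_{q,k}$, where the open dense families $\mathcal S_{q,k}$ are those introduced in the proof of Theorem~\ref{theoremLebesgue-a-e-isInShrinkingInterval}; it suffices to prove the statement for such $f$, both in $\C$ and in $\HM$. Fix $\alpha > 0$. Since $M$ is compact, each iterate of $f$ is uniformly continuous, so we may choose $q \in \N^+$ with $1/q < \alpha$ such that, moreover, for every eventually periodic shrinking set $I$ of $f$ with $\mbox{diam}(I) < 1/q$ one has $\mbox{diam}(f^j(I)) < \alpha$ for all $j \in \N$. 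Indeed, let $n$ be the transience time of $I$ and let $p$ be the period of the associated periodic shrinking set $I'$, so $f^n(I) \subset I'$. For $0 \le j \le n-1$ we have $\mbox{diam}(f^j(I)) \le \mbox{diam}(I) < 1/q$ by definition; and for $j \ge n$, writing $f^j(I) = f^{\,j-n}(f^n(I)) \subset f^{\,j-n}(I')$ and using $f^p(I') \subset I'$ together with the shrinking property of $I'$, we get $\mbox{diam}(f^j(I)) \le \mbox{diam}(I') < 1/q$. Hence $\mbox{diam}(f^j(I)) < 1/q < \alpha$ for all $j \ge 0$.

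By Theorem~\ref{theoremLebesgue-a-e-isInShrinkingInterval}, for Lebesgue almost every $x \in M$ there is an eventually periodic shrinking set $I_q = I_q(x)$ with $x \in I_q$ and $\mbox{diam}(I_q) < 1/q$. For such an $x$ and every $y \in I_q$, the points $f^n(x)$ and $f^n(y)$ both lie in $f^n(I_q)$, so
$$dist(f^n(x), f^n(y)) \le \mbox{diam}(f^n(I_q)) < \alpha \qquad \forall\, n \in \N.$$
Therefore $I_q \subset \{ y \in M : dist(f^n(x), f^n(y)) < \alpha \ \ \forall\, n \in \N \}$. Since $I_q$ is a nonempty open set, it has positive Lebesgue measure, and hence so does this dynamical ball. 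As this holds for Lebesgue a.e.\ $x \in M$ and $\alpha > 0$ was arbitrary, $f$ is Lebesgue-a.e.\ strongly non positively expansive.
\end{proof}
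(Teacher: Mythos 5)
Your proof is correct and takes essentially the same route as the paper's: both invoke Theorem~\ref{theoremLebesgue-a-e-isInShrinkingInterval} to place Lebesgue-a.e.~$x$ inside an eventually periodic shrinking set $I$ of diameter $< \alpha$, observe that the whole forward orbit of $I$ then has diameter $< \alpha$ so that $I$ sits inside the dynamical $\alpha$-ball of $x$, and note that $I$, being nonempty open, has positive Lebesgue measure. One small caveat applies equally to the paper's version: the assertion $\mbox{diam}(I') < 1/q$ for the periodic shrinking set $I'$ with $f^n(I) \subset I'$ is not literally forced by the stated definition of \emph{eventually periodic shrinking}, which bounds $\mbox{diam}(f^j(I))$ only for $1 \le j \le n-1$; it is, however, true for the sets actually produced in the proof of Theorem~\ref{theoremLebesgue-a-e-isInShrinkingInterval} (the whole orbit of each $\lambda_1 T_i$ stays inside the simplexes of diameter $< 1/q$), so the argument goes through exactly as the paper intends.
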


\begin{proof}

Take any periodic or eventually periodic set $I$  with diameter smaller than $\alpha$. Then,  $\mbox{diam}(f^j(\overline I)) <\alpha$ for all $j \geq 0$. Any two points $x, y \in I$  satisfy $dist(f^j(x), f^j(y)) < \alpha $ for all $j \geq 0$. Thus for any point $x \in I$ we have
$$ Leb\Big (\{ y \in M:  dist(f^n(x), f^n(y))< \alpha \ \  \forall \ n \in  \mathbb{N}  \}   \Big) \geq Leb(I) > 0.$$
Thus the result follows directly from  Theorem \ref{theoremLebesgue-a-e-isInShrinkingInterval}.
\end{proof}

An $f \in \C$ is called \em positively expansive  \em  if there exists a   constant $\alpha >0$,  called the  \em expansivity constant, \em such that, for any two points $x, y \in M$, if  $dist(f^n(x), f^n(y) ) \leq \alpha $ for all $  n \in \mathbb{N},$ then $  x=y.$ Clearly a map which is  Lebesgue a.e.\ strongly non positively expansive is not positively expansive.  For compact connected manifolds with boundary it is known that every map is not positively expansive (Theorem 2.2.19, \cite{AH}). On the other hand, if $M$ has no boundary, then we have examples of positively expansive maps for example hyperbolic expanding toral diffeomorphisms.

\subsection{The sets $AA$ and $AA_1$}

For any  $x \in M$ we define
\begin{equation}
\label{eqn-mu_x}
\mu_x \defeq \lim_{n \to \infty} \frac{1} {n}  \sum_{j=0}^{n-1} \delta_{f^j(x)}\end{equation} when this limit exists; namely, when $p\omega(x) = \{\mu_x\}$.
Let
\begin{equation}
\label{eqnEquationAA}
AA \defeq \{  x \in M: p\omega(x) = \{\mu_x\}\} \mbox{ and }
AA_1 \defeq \{  x \in AA: \mu_x \in \O_f\}.\end{equation}
For typical continuous maps or homeomorphisms,  Corollary \ref{theoremAAa)}    states
\begin{equation}\label{eqAA}
Leb(AA) = Leb(M),
\end{equation}
while   \cite[Proposition 3.1 and Definition 3.2]{CE2} implies that
\begin{equation}\label{eqAA1}
Leb(AA_1) = Leb(M).
\end{equation}
\begin{proposition} \label{propositionAA_1}
If $f $ is typical  in $\C$ (resp.\  $\HM$) then
$$\O_f = \overline{\{ \mu_x: x \in AA_1 \}}.$$
The set $\{x \in AA_1: d(\mu_x, \mu_{x_0}) < \e\}$ has positive Lebesgue measure for every $\e >0$ and for every $x_0 \in AA_1$.
\end{proposition}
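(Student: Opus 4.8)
The plan is to read off both assertions directly from the definition of pseudo-physical measure, combined with the two full-measure facts $Leb(AA)=Leb(M)$ and $Leb(AA_1)=Leb(M)$, i.e.\ \eqref{eqAA} and \eqref{eqAA1}, which are available because $f$ is typical (the former by Corollary \ref{theoremAAa)}, the latter by \cite{CE2}). The inclusion $\overline{\{\mu_x : x \in AA_1\}} \subseteq \O_f$ is immediate from the definition of $AA_1$: every $\mu_x$ with $x \in AA_1$ lies in $\O_f$, and $\O_f$ is closed, so it contains the closure.

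For the reverse inclusion I would fix $\mu \in \O_f$ and an arbitrary $\e>0$. Since $\mu$ is pseudo-physical, $A_\e(\mu) = \{x \in M : d(p\omega(x),\mu) < \e\}$ has positive Lebesgue measure; intersecting it with the full-measure set $AA_1$ yields a point $x \in A_\e(\mu) \cap AA_1$. For such $x$ one has $p\omega(x) = \{\mu_x\}$, so the quantity $d(p\omega(x),\mu)$ is simply $d(\mu_x,\mu)$, whence $d(\mu_x,\mu) < \e$ with $x \in AA_1$. As $\e>0$ was arbitrary, $\mu \in \overline{\{\mu_x : x \in AA_1\}}$, which completes the first part.

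The last statement is the same argument applied to the point $x_0$ itself. Since $x_0 \in AA_1$ we have $\mu_{x_0} \in \O_f$, hence $\mu_{x_0}$ is pseudo-physical and $A_\e(\mu_{x_0})$ has positive Lebesgue measure for every $\e>0$. Using again that $p\omega(x)=\{\mu_x\}$ on the full-measure set $AA_1$, one obtains the inclusion $A_\e(\mu_{x_0}) \cap AA_1 \subseteq \{x \in AA_1 : d(\mu_x,\mu_{x_0}) < \e\}$, and the left-hand side has positive Lebesgue measure by the previous sentence together with \eqref{eqAA1}.

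I do not expect a substantial obstacle here: the proof is pure bookkeeping once \eqref{eqAA}, \eqref{eqAA1}, and the definition of $\O_f$ are in hand. The only point that deserves a word of care is that the quantity $d(p\omega(x),\mu)$ appearing in the definition of $A_\e(\mu)$ collapses to $d(\mu_x,\mu)$ precisely on the set $AA$, where $p\omega(x)$ is the singleton $\{\mu_x\}$; this is why the typicality hypothesis, through $Leb(AA_1)=Leb(M)$, is exactly what is needed to translate ``positive measure of $A_\e(\mu)$'' into ``$\mu$ is approximated by some $\mu_x$ with $x\in AA_1$''.
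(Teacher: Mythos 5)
Your proof is correct and follows essentially the same route as the paper's: both directions of the first equality use the definition of $AA_1$, the closedness of $\O_f$, and the full Lebesgue measure of $AA_1$ to replace $d(p\omega(x),\mu)$ by $d(\mu_x,\mu)$, and the second assertion is obtained by applying the same observation to $\mu_{x_0}$.
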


\begin{proof}  By definition of the set $AA_1$ we have
$\{\mu_x: x \in AA_1\}  \subset \O_f$. Besides, since $\O_f$ is closed in the weak$^*$ topology (see \cite[Theorem 1.3]{CE1}), we have
 $\overline{\{\mu_x: x \in AA_1\}}  \subset \O_f$.

 We turn to the other inclusion.  Suppose $\mu \in \O_f$, i.e.,
 $$Leb\{ x \in M: d(p\omega(x),\mu) < \e)\} > 0 \text{ for all } \e > 0.$$
Since $Leb(AA_1) =1$ and $p\omega(x) = \mu_x$ for $x \in AA_1$,
this is equivalent to
  $$Leb\{ x \in AA_1: d(\mu_x,\mu) < \e)\} > 0  \text{ for all } \e > 0.$$
Thus $\mu \in \overline{\{ \mu_x: x \in AA_1 \}}$. Since $\mu \in \O_f$ is arbitrary we conclude
 $\O_f \subset \overline{\{ \mu_x: x \in AA_1 \}}.$ The first assertion of Proposition \ref{propositionAA_1} is proven.

 To prove the second assertion recall  the definition of pseudo-physical measure,  $\mu \in {\mathcal O}_f$ if the set $\{x \in M \colon d(p\omega(x), \mu) < \e\}$ has positive Lebesgue measure. Combining this with Equality (\ref{eqAA1}), we deduce that $\{x \in AA_1 \colon d(\mu_x, \mu) < \e\}$ has positive Lebesgue measure, for any $\mu \in {\mathcal O}_f$, in particular for $\mu_{x_0}$ for any $x_0 \in AA_1$.
\end{proof}

\subsection{Approximation of pseudo-physical measures by periodic measures.}

In this subsection we will prove Theorem \ref{TheoremPeriodicSRB-l}:  each pseudo-physical measure can be arbitrarily approximated by atomic measures supported on periodic point. We will use the following lemma.
\begin{lemma}
\label{lemmaInvMeasOnShrinkingIntervals} Let $f \in \C$ (resp.\ $f \in \HM$) and $\mu$ be an $f$-invariant measure. Assume that $I \subset M$ is a periodic shrinking set of period $p$, and denote  $K \defeq  \bigcup_{j= 0}^{p-1} f^j(\overline I).$ Then \begin{equation}
\label{eqnno.}
\mu (f^j(\overline I)) = \mu(\overline I) = \frac{\mu(K)}{p}   \ \ \ \forall \ j  \geq 0. \end{equation}
If $\mu$ is additionally ergodic, then $\mu(K)$ is either zero or one.
\end{lemma}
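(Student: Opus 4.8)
The plan is to first observe that since $I$ is a periodic shrinking set of period $p$, the sets $\overline I, f(\overline I), \dots, f^{p-1}(\overline I)$ have pairwise disjoint interiors (indeed the sets $f^j(I)$ are pairwise disjoint by definition), and $f^p(\overline I) \subset \overline{f^p(I)} \subset \overline I$. The key point is that the topological boundaries $\partial f^j(\overline I)$ are $\mu$-null: this follows because $\partial \overline I$ is contained in the $(m-1)$-skeleton of a triangulation, and more importantly because the nested relation $f^p(\overline I)\subset I$ forces the boundary pieces to be swept into the interior. Actually the cleanest route is: for each $j$, $\partial f^j(\overline I) = f^j(\partial \overline I)$ (as $f^j$ is continuous and, restricted appropriately, behaves well on the simplex), so it suffices to control $\mu(\partial\overline I)$. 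I would argue that $\partial\overline I$ together with all its forward iterates forms a set that the dynamics pushes strictly inside $I$ after time $p$; combined with $f$-invariance of $\mu$ and the disjointness of the first $p$ iterates, one gets $\mu(\partial \overline I) = 0$, hence all the relevant boundaries are null and I may freely pass between $f^j(\overline I)$ and $f^j(I)$ in what follows.

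Next I would exploit invariance directly. By $f$-invariance, $\mu(f^{-1}(A)) = \mu(A)$ for every Borel $A$. Since the first $p$ iterates of $I$ are disjoint and $f^p(\overline I)\subset I$, for $0 \le j \le p-1$ we have $f^{-1}(f^{j+1}(I)) \supset f^j(I)$, and the only part of $M$ mapping into $f^{j+1}(I)$ within $\overline I\cup\dots\cup f^{p-1}(\overline I)$ (up to the null boundary) is $f^j(\overline I)$ for $j \le p-2$; for the wrap-around, $f^{-1}(I)$ captures $f^{p-1}(\overline I)$. Chasing these containments with invariance gives $\mu(f^{j+1}(\overline I)) \ge \mu(f^j(\overline I))$ for $0 \le j \le p-2$ and $\mu(\overline I) \ge \mu(f^{p-1}(\overline I))$ (or the analogous inequalities in the right direction), and the cycle of inequalities forces all $\mu(f^j(\overline I))$, $0\le j\le p-1$, to be equal. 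Calling this common value $a$, we get $\mu(K) = \sum_{j=0}^{p-1}\mu(f^j(\overline I)) = pa$ since the $f^j(I)$ are disjoint and boundaries are null, so $a = \mu(K)/p$. For $j \ge p$ the equality $\mu(f^j(\overline I)) = \mu(\overline I)$ follows because $f^j(\overline I)$ decreases (in the sense of measure) as $j$ grows past multiples of $p$ — more precisely $f^{p}(\overline I)\subset I$ implies $\mu(f^{p}(\overline I))\le \mu(\overline I)=a$, and invariance-type arguments pin it back to $a$; iterating handles all $j$.

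For the last sentence, assume $\mu$ is ergodic. The set $K = \bigcup_{j=0}^{p-1} f^j(\overline I)$ satisfies $f(K) \subset K$ up to a $\mu$-null set: indeed $f(f^j(\overline I)) = f^{j+1}(\overline I)\subset K$ for $j \le p-2$, and $f(f^{p-1}(\overline I)) = f^p(\overline I)\subset I \subset K$. Thus $K$ is (mod $\mu$) a forward-invariant set; by a standard argument (for instance, $\tilde K := \bigcap_{n\ge 0} f^{-n}(K)$ is genuinely invariant and $\mu(\tilde K) = \mu(K)$ because $\mu(f^{-1}(K))\ge\mu(K)$ forces $\mu(f^{-1}(K)\setminus K)=0$ when total mass is finite, iterated), ergodicity gives $\mu(K)\in\{0,1\}$.

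The main obstacle I anticipate is the boundary/closure bookkeeping: making rigorous that $\mu(\partial\overline I) = 0$ and that the containments $f^{j+1}(\overline I)$ versus $f^{-1}$-preimages hold exactly rather than merely up to null sets. Everything else is a routine chase of invariance through a finite cycle of inclusions, but one must be careful that "shrinking" (strict decrease of diameter) and the containment $f^p(\overline I)\subset I$ (note: into the open set $I$, not just $\overline I$) are exactly what rules out boundary mass and makes the cyclic inequalities collapse to equalities.
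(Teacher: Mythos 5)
Your core argument is essentially the paper's: use $\mu(f^{-1}(A)) = \mu(A)$ together with $\overline I \subset f^{-j}(f^j(\overline I))$ and $f^{kp}(\overline I) \subset I$ to close a cycle of inequalities among the $\mu(f^j(\overline I))$, then for the ergodic part observe $f(K) \subset K$ so $K \subset f^{-1}(K)$ and use invariance plus ergodicity. (The paper organizes the chain slightly more cleanly by writing any $j \geq 0$ as $j = kp - i$ with $1 \leq i \leq p$ and running the whole chain at once, rather than your two-stage "first $j < p$, then iterate past $p$"; both work.)

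Where you go astray is at the start: you misread the definition of periodic shrinking set. The paper requires the \emph{closed} sets $\{f^j(\overline I)\}_{0 \leq j \leq p-1}$ to be pairwise disjoint, not merely the open sets $f^j(I)$. So the entire discussion of boundaries being $\mu$-null, which you flag as "the main obstacle" and for which you only sketch a heuristic ("boundary pieces swept into the interior"), is simply not needed: $\mu(K) = \sum_{j=0}^{p-1}\mu(f^j(\overline I))$ is immediate from additivity on disjoint closed sets, with no null-set bookkeeping. Worse, the heuristic you offer for $\mu(\partial\overline I) = 0$ is not a proof and would be nontrivial to make rigorous if it were actually needed — so it is fortunate that it is not. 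A smaller, harmless over-caution of the same flavor: $f(K) \subset K$ holds exactly, not merely "up to a $\mu$-null set." With the definition read correctly, the rest of your plan — the containment chase and the forward-invariance argument — goes through and matches the paper's proof.
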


\begin{proof}
 For any $j \geq 0$ let $j= kp - i$  with $k \in \N^+$ and $1 \leq i \leq p$. Using that $\mu$  is $f$-invariant, that $f^p(\overline I) \subset I$ and that $f^{-j} (f^j (\overline I)) \supset \overline I$ for all $j \geq 0$, we obtain:
 $$\mu (f^j(\overline I)) =  \mu (f^{-j}(f^j(\overline I)) \geq \mu (\overline I) \geq \mu(f^{kp}(\overline I)) = \mu (f^{-i} (f^{kp}(\overline I)) \geq \mu(f^j(\overline I)).$$
Hence, all the inequalities above are equalities; and thus $\mu(f^j(\overline I)) = \mu(\overline I)$ for all $j \geq 0$.
Assertion (\ref{eqnno.}) follows, since $\mu(K) = \sum_{j= 0}^{p-1} \mu ( f^j(\overline I) )$.

Finally, since $f(K) \subset K$, we have $K \subset f^{-1}(f(K)) \subset f^{-1}(K)$.  Since $\mu$ is ergodic, the set $K$  must have $\mu$-measure equal to zero or one.
\end{proof}

 \begin{theorem}\label{TheoremPeriodicSRB-l}
 Typical maps  $f \in \C$ (resp.\ $f \in \HM)$,  satisfy
 ${\mathcal O}_ f \subset \overline{\mbox{Per}_{f}}.$
 \end{theorem}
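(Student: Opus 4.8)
The plan is to combine Proposition~\ref{propositionAA_1}, Theorem~\ref{theoremLebesgue-a-e-isInShrinkingInterval}, and Lemmas~\ref{lemma-dist} and \ref{lemmaInvMeasOnShrinkingIntervals}. Fix $f$ in the (again typical) set of maps for which all of these results hold. By the first assertion of Proposition~\ref{propositionAA_1} we have $\O_f=\overline{\{\mu_x:x\in AA_1\}}$, and $\overline{\mbox{Per}_f}$ is closed; so it suffices to show that $\mu_{x_0}\in\overline{\mbox{Per}_f}$ for every $x_0\in AA_1$. Fix such an $x_0$ and $\e>0$; we shall produce $\nu\in\mbox{Per}_f$ with $d(\mu_{x_0},\nu)<\e$.

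Let $q$ be the integer provided by Lemma~\ref{lemma-dist} for the value $\e/2$ (the proof of that lemma goes through verbatim with the fixed integer $m$ there replaced by any positive integer $N$, the bound $\sum_{j=1}^{N}\frac1{qN}=\frac1q$ being independent of $N$, so we may use it below with $N=p$ pieces). By the second assertion of Proposition~\ref{propositionAA_1}, the set $B\defeq\{x\in AA_1:d(\mu_x,\mu_{x_0})<\e/2\}$ has positive Lebesgue measure, while by Theorem~\ref{theoremLebesgue-a-e-isInShrinkingInterval} Lebesgue a.e.\ point of $M$ lies in an eventually periodic shrinking set of diameter $<1/q$. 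Hence we may pick a point $x\in B$ lying in such a set $J$; let $n$ be its transience time, let $I$ be its associated periodic shrinking set, and let $p$ be the period of $I$. We will also arrange (see the last paragraph) that $\mbox{diam}\big(f^j(\overline I)\big)<1/q$ for all $j\ge0$.

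Since $x\in AA$, the measure $\mu_x$ exists and is $f$-invariant, and since $f^m(x)\in K\defeq\bigcup_{i=0}^{p-1}f^i(\overline I)$ for all $m\ge n$ while $K$ is closed, $\mu_x$ is supported on $K$. Lemma~\ref{lemmaInvMeasOnShrinkingIntervals} then gives $\mu_x\big(f^i(\overline I)\big)=\mu_x(K)/p=1/p$ for $0\le i\le p-1$. Applying the Brouwer fixed point theorem to the continuous self-map $f^p$ of $\overline I$ (valid because $f^p(\overline I)\subset I\subset\overline I$) yields a periodic point $z\in I$; as $f^i(z)\in f^i(\overline I)$ and the sets $f^0(\overline I),\dots,f^{p-1}(\overline I)$ are pairwise disjoint, $z$ has exact period $p$, so $\nu\defeq\frac1p\sum_{i=0}^{p-1}\delta_{f^i(z)}\in\mbox{Per}_f$ and $\nu\big(f^i(\overline I)\big)=1/p$ for each $i$. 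The $p$ sets $f^0(\overline I),\dots,f^{p-1}(\overline I)$ are pairwise disjoint, compact (hence closed), connected (continuous images of the simplex $\overline I$), of diameter $<1/q$, and they carry the supports of both $\mu_x$ and $\nu$, with $\big|\mu_x(f^i(\overline I))-\nu(f^i(\overline I))\big|=0\le1/(qp)$; Lemma~\ref{lemma-dist} therefore gives $d(\mu_x,\nu)<\e/2$, whence $d(\mu_{x_0},\nu)\le d(\mu_{x_0},\mu_x)+d(\mu_x,\nu)<\e$, as desired.

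The only delicate point is the smallness of the pieces $f^j(\overline I)$: the statement of Theorem~\ref{theoremLebesgue-a-e-isInShrinkingInterval} controls the diameter of the eventually periodic shrinking set $J$, but not that of the periodic shrinking set $I$ it eventually enters, nor that of the iterates $f^j(\overline I)$. This is not a genuine obstruction. In the construction used to prove Theorem~\ref{theoremLebesgue-a-e-isInShrinkingInterval}, $J$ is a simplex $\lambda_1\bar T_i$ of a triangulation of mesh $<1/q$, and each forward iterate $g^s(\lambda_1\bar T_i)$ is contained in another simplex of the same triangulation; consequently every set in the forward orbit of $\overline J$ — in particular $\overline I$ and all its images $f^j(\overline I)$ — has diameter $<1/q$, so Theorem~\ref{theoremLebesgue-a-e-isInShrinkingInterval} may be applied with this strengthened conclusion. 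Once that is in hand, everything else (invariance and equidistribution of $\mu_x$ over the $p$ cyclic pieces, existence and exactness of the period of $z$, and the final metric estimate) is routine.
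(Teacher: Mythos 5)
Your proof is correct and follows essentially the same route as the paper's: pick, inside a periodic shrinking set of small diameter, a point $x$ whose limit measure $\mu_x$ is close to the given pseudo-physical measure, use Lemma~\ref{lemmaInvMeasOnShrinkingIntervals} to show that $\mu_x$ equidistributes over the $p$ cyclic pieces $f^j(\overline I)$, observe that the atomic measure $\nu$ on the Brouwer-periodic point in $I$ does the same, and conclude via Lemma~\ref{lemma-dist}. The only cosmetic difference is that you reduce first via Proposition~\ref{propositionAA_1} to measures of the form $\mu_{x_0}$ with $x_0\in AA_1$, whereas the paper works directly from the definition of a pseudo-physical measure $\mu$ and chooses $y_n$ with $d(p\omega(y_n),\mu)<1/n$; these amount to the same thing.

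Two remarks on details you raised. First, you are right that the integer $m$ appearing in Lemma~\ref{lemma-dist} is simply the number of pieces $I_j$, that the choice of $q$ in its proof is independent of that number, and that the lemma is indeed applied in the paper (and in your argument) with $m$ replaced by the period $p$. Second, and more substantively, you correctly identify that the statement of Theorem~\ref{theoremLebesgue-a-e-isInShrinkingInterval} only bounds the diameter of the eventually periodic shrinking set $J$, not of the periodic shrinking set $I$ it enters nor of its iterates $f^j(\overline I)$ — a gap the paper's own proof steps over when it asserts $\mbox{diam}(I_n)\le 1/q_n$. Your fix is the right one: in the construction used to prove Theorem~\ref{theoremLebesgue-a-e-isInShrinkingInterval}, every iterate $g^s(\lambda_1\T_i)$ lands inside some $\lambda_1 T_{j}$, and the associated periodic shrinking set $I$ can be chosen among the $\lambda_1 T_j$, all of which have diameter less than $1/q$; so the theorem holds with the strengthened conclusion that $\mbox{diam}(f^j(\overline I))<1/q$ for all $j$. (One small imprecision: $\overline I$ is not itself a forward iterate of $\overline J$, but rather one of the $\lambda_1\overline T_j$ containing such an iterate — the diameter bound goes through regardless.)
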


 \begin{proof}
Choose  $q_n \geq 1$ such that  any two  measures  satisfying the $q_n$-approach conditions of  Lemma \ref{lemma-dist}, are mutually at distance smaller than $1/n$.
Let $\mu \in \O_f$.
By the definition of pseudo-physical measure, the set $A_n \subset M$ of points $y$ such that $$
d(p \omega (y), \mu)< 1/n, $$ has positive Lebesgue measure. If $f$ is typical in $\C$ or $\HM$, then Lebesgue a.e.~$y_n \in A_n$ satisfy $p\omega(y_n) = \{\mu_{y_n}\}$ (Corollary \ref{theoremAAa)}) and is contained in an arbitrarily small eventually periodic shrinking set (Theorem \ref{theoremLebesgue-a-e-isInShrinkingInterval}).
Therefore,  \begin{equation}
\label{eqn09}
d(\mu_{y_n}, \mu)< 1/n, \end{equation} and $\mu_{y_n}\ $ is supported on $K_n \defeq \bigcup_{j= 1}^{p_n} f^j(\overline I_n)$,  where $I_n$ is a periodic shrinking set of period $p_n$,  such that $\mbox{diam}(I_n) \leq 1/q_n$. Applying Lemma \ref{lemmaInvMeasOnShrinkingIntervals} and the definition of shrinking set yields $\mu_{y_n}(f^j(\overline I_n)) = 1/p_n$.

Since the set $I_n$ is shrinking periodic with period $p_n$, there exists at least one periodic orbit of period $p_n$ in $K_n$. Consider the periodic invariant measure $\nu_n $ supported on this periodic orbit. It also satisfies $\nu_n(f^j(\overline I)) = 1/p_n$. So, applying Lemma \ref{lemma-dist} we deduce that
\begin{equation}
\label{eqn10}   \  \exists \   \nu_n \in \mbox{Per}_{f}\colon \
 d(\mu_{y_n}, \nu_n) \leq 1/n. \end{equation}

Combining Inequalities (\ref{eqn09}) and (\ref{eqn10}), we deduce that there exists    sequence of periodic atomic measures $\nu_n$ such that
 $d(\nu_n, \mu) < 2/n$, finishing the proof of Theorem \ref{TheoremPeriodicSRB-l}.
\end{proof}

\section{Most invariant measures are not pseudo-physical}\label{sec4}
In this section we will prove Theorem \ref{Theorem-main2}.
We start with an extension of  Theorem of Abdenur and Andersson in \cite[Theorem 3.6]{AA}:

\begin{theorem}
 {\bf (Extension  of Abdenur-Andersson Theorem) } \label{theoremAA}
\noindent For typical maps in $C(M)$  there does not exist physical measures.
\end{theorem}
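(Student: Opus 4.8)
The plan is to obtain this as a consequence of a recursive strengthening of the perturbation constructed in the proof of Theorem~\ref{theoremLebesgue-a-e-isInShrinkingInterval}. It suffices to prove that for every $n\in\N^+$ the following family $\mathcal G_n\subset\C$ is open and dense: $f\in\mathcal G_n$ if $f$ admits a finite rooted forest of depth $n$ whose vertices are forward-invariant open sets, each a finite union of eventually periodic shrinking sets all flowing into a common periodic cycle, such that the level-$0$ vertices cover $M$ up to Lebesgue measure $1/n$, and every vertex $D$ has at least two children $D_1,\dots,D_r$ ($r\ge 2$) which are forward-invariant subsets of $D$ with $Leb\big(D\setminus\bigcup_s D_s\big)<\tfrac1n Leb(D)$, $Leb(D_s)\le\tfrac23 Leb(D)$, and pairwise disjoint periodic cycles. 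The family $\mathcal G_n$ is open because each of its defining conditions is stable under small $C^0$ perturbations: being a periodic or eventually periodic shrinking set is an open condition; the Lebesgue measure of a fixed open set is unchanged; and, the transience and period lengths being locally constant, a forward orbit closure is a finite union of sets $\overline{g^j(D_s)}$ depending continuously (in the Hausdorff metric) on $g$, so disjointness of finitely many such closures persists.

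Assume the density of the $\mathcal G_n$. Let $f\in\bigcap_n\mathcal G_n$ and let $\mu$ be any Borel probability measure; I claim $Leb\{x:p\omega(x)=\{\mu\}\}=0$, which is precisely the nonexistence of a physical measure. Fix $n$. A point $x$ with $p\omega(x)=\{\mu\}$ lying in a level-$0$ vertex $D$ has $\mu=\mu_x$; since $x$ belongs to one of the eventually periodic shrinking sets making up $D$, its orbit eventually enters and never leaves the periodic cycle of $D$, so by Lemma~\ref{lemmaInvMeasOnShrinkingIntervals} (together with the argument of Corollary~\ref{theoremAAa)}) $\mu_x$ is supported on that cycle. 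Descending the forest, at each vertex $x$ lies in some child outside an exceptional set of relative Lebesgue measure $<1/n$, and because the children's cycles are pairwise disjoint while $\mathrm{supp}\,\mu\neq\varnothing$, it lies in exactly one of them. Hence $x$ is driven into a single leaf, of Lebesgue measure at most $(2/3)^n Leb(D)$; absorbing the exceptional sets along the branch (a geometric series bounded by $\tfrac3n Leb(D)$) and summing over the level-$0$ vertices gives $Leb\{x:p\omega(x)=\{\mu\}\}\le\big((2/3)^n+3/n\big)Leb(M)+1/n$. Letting $n\to\infty$ proves the claim.

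It remains to establish the density of $\mathcal G_n$, which is where all the work lies. Given $f$ and $\e>0$, first apply the construction in the proof of Theorem~\ref{theoremLebesgue-a-e-isInShrinkingInterval} to get $g_0$ with $\rho(f,g_0)<\e/2$, a fine triangulation, and eventually periodic shrinking sets $\lambda_1\overline{T}_a$ with $g_0(\lambda_1 T_a)\subset\lambda_1 T_{j(a)}$; the connected components of the functional graph $a\mapsto j(a)$ group the cells into forward-invariant sets $D_u=\bigcup_{a\in C_u}\lambda_1 T_a$ with pairwise disjoint closures — hence pairwise disjoint periodic cycles — covering $M$ up to $1/n$: take these as the level-$0$ vertices. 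Now refine recursively: inside a vertex $D$, whose recurrent dynamics is concentrated in a small cycle, perturb the map on a small neighbourhood of that cycle so as to split it into $r\ge2$ sub-cycles, each a genuine interior-mapping shrinking set with closure disjoint from its siblings, while routing the incoming Lebesgue mass of $D$ into the corresponding sub-basins in essentially equal proportions; repeating this for $n$ levels produces the required forest, the total additional perturbation being $<\e/2$ because each local modification is carried out on a set small enough that the oscillation of the current map there is correspondingly small.

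The principal difficulty is this recursive step — arranging, at each vertex, that the children simultaneously (i) cover all but a $1/n$-fraction of the vertex, (ii) have pairwise disjoint forward orbit closures, and (iii) are Lebesgue-balanced, none exceeding $\tfrac23$ of the parent; the balancing (iii) is essential, being exactly what makes the leaves exponentially small and closes the argument. For homeomorphisms one can exploit injectivity to keep forward images of distinct simplices separated; for a merely continuous $f$, which may crush large regions onto small ones, the balance cannot be inferred from the given dynamics and must be imposed: on each small region where a refinement is performed one replaces the map by an explicit model map — legitimate because the region, hence the oscillation of $f$ on it, is tiny — which sends prescribed halves of the region onto prescribed disjoint small targets. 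Carrying this out consistently over the whole forest and over all $n$, keeping every child a shrinking set with closure disjoint from its siblings and the total perturbation $C^0$-small, is the technical heart of the proof.
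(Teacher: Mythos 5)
Your approach is genuinely different from the paper's: the paper does not construct anything new for the boundaryless case but instead cites Abdenur--Andersson \cite[Theorem~3.6]{AA} directly, and then spends the entire proof on a soft topological reduction --- forming the double $D(M)$ of a manifold with boundary, restricting to the open dense family $\mathcal V=\{f: f(M)\subset M\setminus\partial M\}$, and showing that the restriction operator $g\mapsto g|_{M_1}$ transports residuality between $C(D(M))$ and $\mathcal V\subset C(M)$. Your proposal, by contrast, is a self-contained construction that would reprove the Abdenur--Andersson theorem itself. That is a legitimate and in principle more informative route, but it is not what the paper does, and as written it has real gaps.

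The decisive gap is the one you flag yourself: the density of $\mathcal G_n$ is only sketched, and it is exactly where the content lies. ``Split the cycle into $r\ge2$ sub-cycles with disjoint closures while routing incoming Lebesgue mass in balanced proportions'' is a substantial claim. The balance condition $Leb(D_s)\le\tfrac23 Leb(D)$ is a statement about basins of the perturbed map, and basins are notoriously unstable under $C^0$ perturbation; that the local model map can be glued in while keeping the perturbation $\e$-small, preserving all the shrinking sets built at earlier levels of the forest and at other vertices of the same level, and simultaneously controlling how $Leb$ redistributes among the new sub-basins, is precisely what needs a proof and is not obvious. (Compare Theorem~\ref{TheoremAddedNonUniqErgodic} in the paper, which performs a single splitting with no quantitative control on basins; going to a recursive, measure-balanced splitting is a genuine jump.)

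There is also an internal inconsistency in the forest definition: you require \emph{every} vertex to be ``a finite union of eventually periodic shrinking sets all flowing into a common periodic cycle,'' while also requiring every non-leaf vertex to have children with \emph{pairwise disjoint} periodic cycles contained in it. If the children are forward-invariant subsets of the parent, their cycles are contained in the parent's recurrent set; once the parent has two children with disjoint cycles, the parent's recurrent set is no longer a single cycle, so the parent fails its own defining condition. You need to weaken the condition on non-leaf vertices (e.g.\ impose the ``common cycle'' clause only at leaves), and then re-verify that the openness argument for $\mathcal G_n$ still goes through. The counting argument after that (at most one child per level for a fixed $\mu$, exponential leaf size, geometric error) is fine once the structure is made consistent and $\mathcal G_n$ is shown dense, but as it stands the proof is incomplete precisely at the step you describe as its heart.
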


\begin{proof}
In \cite[Theorem 3.6]{AA}, Abdenur and Andersson   prove this  assertion  in the case that $M$ is a compact connected
$C^1$ manifold without boundary.
Now, let us prove that it also holds   when $M$ has boundary.   Given a manifold
with boundary $M$, take two disjoint copies $M_1, M_2$ and identify each boundary point in $M_1$ with the same point in $M_2$. We obtain the double cover
$D(M)$ of $M$ which is a $C^1$ manifold  without boundary  with the same dimension as $M$. See for example 9.32 in \cite{L} for details of this consruction.

Suppose that $M$ has boundary, and denote by ${\mathcal V}$ the family of all the continuous map  $f \in \C$ such that $f(M) \subset M \setminus \partial M$.
 By construction ${\mathcal V}$  is   open and dense in $\C$.  For each $f \in {\mathcal V}$, construct the nonempty  set ${\mathcal S} _{f} \subset C(D(M))$ composed by all the continuous maps $g: D(M) \mapsto D(M)$ such that
$g|_{M_1}= f$.

We claim that for any open set ${\mathcal U} \subset {\mathcal V}$ the set
\begin{equation} \label{eqnU'} {\mathcal U}' := \bigcup_{f \in {\mathcal U}} {\mathcal S} _f = \Big \{g \in C(D(M)) \colon  g|_{M_1}=: f \in {\mathcal U} \Big\}\nonumber\end{equation}
 is open in $C(D(M))$;  in particular ${\mathcal V}' $ is open in $C(D(M))$. This claim follows from that fact that
the restriction that sends each map $g \in C(D(M)) $ to the map $g|_{M_1} : M_1 \mapsto g(M_1) \subset M$ is a continuous operator. Therefore, the preimage ${\mathcal U}'$ by that restriction, of the open family ${\mathcal U} \subset {\mathcal V} \subset  C(M) \subset C(M_1, D(M)) $, is open.

 Next we claim that a converse-like statement is true: for any open set ${\mathcal U}' \subset {\mathcal V}' \subset C(D(M))$ the set
  \begin{equation} \label{eqnU} {\mathcal U} :=    \Big \{f \in {\mathcal V} \colon   \ \exists \ g \in {\mathcal U'} \subset C(D(M)) \mbox{ such that }  g|_{M_1}= f \Big\}\end{equation}
  is open in $C(M)$.  In fact, since ${\mathcal U}' \subset {\mathcal V}'$,  the restriction $g|_{M_1} $ for each $g \in {\mathcal U}'$, belongs to $ {\mathcal V} $. Besides, the restriction that sends each map $g \in {\mathcal V'} $ to the map $f := g|_{M_1} \in {\mathcal V} $ is an open operator. Therefore, the  image ${\mathcal U}$ by that restriction, of the open family ${\mathcal U}' $, is open.

    Since   Theorem  \ref{theoremAA} holds for typical maps $g \in C(D(M))$, and ${\mathcal V'}$ is nonempty and open in $C(D(M)) $, it also holds for typical maps $g \in {\mathcal V}' $. Namely, there exists a sequence  $\{{\mathcal U}'_n\}_{n \geq 1}$ of open and dense subsets of ${\mathcal V}'$ such that assertions a) and  b)  hold for all $g \in \bigcap_ {n \geq 1} {\mathcal U}'_n \subset {\mathcal V}' $. Consider  the preimages ${\mathcal U}_n \subset {\mathcal V}$ defined in (\ref{eqnU}), of the sets ${\mathcal U}_n$.  Since ${\mathcal U}_n$ is open and dense in ${\mathcal V'}$, the above assertions imply  that ${\mathcal U}_n$ is open and dense in ${\mathcal V}$; hence also in $\C$.

If Theorem \ref{theoremAA}  holds for a map $g \in {\mathcal V}' \subset C(D(M))$, then  it also holds for the restriction $g|_{M_1} \in {\mathcal V} \subset \C$. We conclude that it holds for the countable intersection $\bigcap_{n \geq 1} {\mathcal U}_n$ of open and dense sets in $\C$. In other words, Theorem \ref{theoremAA} holds for typical maps in $\C$.
\end{proof}

 {The following theorem follows easily  from results of Hurley and his coauthors on the existence of
many periodic points for homeomorphism of manifolds of dimension at least two without boundary \cite{AHK,H1}.
We give a different proof which also works for manifolds with boundary and also in dimension one.}

\begin{theorem}
\label{TheoremAddedNonUniqErgodic}
Typical maps in $\C$ (resp.\ $\HM$) are not uniquely ergodic.
\end{theorem}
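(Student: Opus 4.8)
The goal is to show that a typical map $f \in \C$ (resp.\ $\HM$) is not uniquely ergodic. The plan is to exhibit, via an open-dense argument, two disjoint periodic orbits (equivalently, two distinct periodic measures) for every map in a residual set, which forces the set of invariant measures to be more than a single point. The natural starting point is Theorem \ref{theoremLebesgue-a-e-isInShrinkingInterval}, or rather its built-in construction: each $\mathcal{S}_{q,k}$ is open and dense, and every $f \in \mathcal{S}_{q,k}$ carries a finite family $\{I_1,\dots,I_l\}$ of (eventually) periodic shrinking sets of diameter $< 1/q$ covering all but $1/k$ of the Lebesgue measure of $M$. Each periodic shrinking set yields a periodic point by the Brouwer fixed-point theorem (as observed just before Theorem \ref{theoremLebesgue-a-e-isInShrinkingInterval}), hence a measure in $\mbox{Per}_f$.

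First I would fix $q$ large enough that $2/q$ is smaller than the diameter of $M$ — this is where we use that $\dim M = m \ge 1$ and $M$ is a nondegenerate manifold, so that no single set of diameter $< 1/q$ can be all of $M$. Then I would take $k$ large; for $f \in \mathcal{S}_{q,k}$ the shrinking sets cover a set of Lebesgue measure $> \mbox{Leb}(M) - 1/k > 0$, and since each has diameter $< 1/q < \mbox{diam}(M)/2$, no single shrinking set covers that whole region, so at least \emph{two} distinct shrinking sets $I_i, I_j$ appear in the family. These two sets are disjoint (being distinct members of the family in the construction — they come from distinct simplices $\lambda_1 T_i$ of a triangulation, which have disjoint interiors), so their forward orbits $K_i = \bigcup f^s(\overline{I_i})$ and $K_j = \bigcup f^s(\overline{I_j})$ might still overlap. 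To fix this I would instead work with the open-dense set $\mathcal{S}_{q,k}$ for a \emph{much} finer parameter: choose $q$ so large that one can find two shrinking sets whose \emph{entire periodic orbits} are disjoint. Concretely, the sum of transience time plus period is bounded by $l$ (the number of simplices in the triangulation), and by taking the triangulation fine enough one can guarantee many shrinking sets lie in regions far apart in $M$; a cleaner route is simply to observe that a periodic shrinking set $I$ of period $p$ has $\mbox{diam}(f^s(I)) < \mbox{diam}(I)$, so $K_i$ is contained in a $\mbox{diam}(I_i)$-neighborhood of finitely many points, hence has small diameter, and two shrinking sets whose defining simplices are at mutual distance $> 2/q$ in $M$ produce disjoint orbit-closures. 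Such a pair exists once $q$ is large because $M$ is connected with positive diameter and the uncovered set has measure $< 1/k \to 0$.

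The main obstacle is precisely ensuring the two periodic orbits are genuinely distinct, i.e.\ that the two periodic measures are different — overlapping forward orbits could a priori be the same orbit. I expect to handle this by strengthening the density argument rather than appealing only to $\mathcal{S}_{q,k}$ as stated: given any $f$ and $\e > 0$, use the construction in the proof of Theorem \ref{theoremLebesgue-a-e-isInShrinkingInterval} to perturb $f$ to $g$ with $\rho(f,g) < \e$ (resp.\ $\rho_H < \e$) possessing shrinking sets $\lambda_1 T_i$ for \emph{all} simplices of a triangulation of mesh $< \delta$; then pick two simplices $T_a, T_b$ and further arrange (by a second small perturbation supported near $T_a$ and near the image simplices, exactly as $f_1$ was produced from $f$) that $T_a$ and $T_b$ return to themselves under appropriate iterates with orbits contained in disjoint unions of simplices — this is possible because the triangulation has at least two simplices and the map $h$ is the identity on $\partial\mathcal{T}$. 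Each such $g$ then has two distinct periodic points with disjoint orbits, hence at least two distinct ergodic invariant measures, so $g$ is not uniquely ergodic; and the property "there exist two disjoint periodic shrinking sets with disjoint orbit-closures" is open. Intersecting this open-dense family over the countable data finishes the proof; the same argument runs verbatim in $\HM$ since the perturbations in Theorem \ref{theoremLebesgue-a-e-isInShrinkingInterval} were already carried out within $\HM$.
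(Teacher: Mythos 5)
Your approach is genuinely different from the paper's, and it has a real gap. The paper proves the two cases separately: for $\C$ it derives non-unique ergodicity as a consequence of the (extended) Abdenur--Andersson theorem that typical maps have \emph{no} physical measures, via the set $AA_1$; for $\HM$ it takes a single periodic shrinking set $I$ with orbit closure $K=\bigcup_{j<p}f^j(\bar I)\neq M$, and shows via the bump function $\psi$ and uniform convergence of empirical measures that a unique invariant measure would force $f^{n_0}(M)\subset K$, contradicting surjectivity. Neither case constructs two disjoint periodic orbits.

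The gap in your proposal is precisely the step you flag as the ``main obstacle'' and then try to wave away. In the proof of Theorem \ref{theoremLebesgue-a-e-isInShrinkingInterval}, the perturbed map $g=f_1\circ h$ sends $\lambda_1\bar T_i$ into $\lambda_1 T_{j(i)}$, and the assignment $i\mapsto j(i)$ is determined by where $f_1(x_i)$ falls; since every node has out-degree one, the functional graph has at least one cycle but may well have exactly one. In that case \emph{all} the shrinking sets $\lambda_1 T_i$ are eventually periodic sets funneling into a \emph{single} periodic shrinking set, and the Brouwer fixed point you extract from each of them is the same periodic orbit; two shrinking sets whose defining simplices are far apart in $M$ do not give disjoint orbit closures, because eventual periodicity carries both into the same cycle. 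Your proposed fix --- ``a second small perturbation supported near $T_a$ and near the image simplices... so that $T_a$ and $T_b$ return to themselves'' --- cannot work in general: the location of $g(\lambda_1 T_a)$ is pinned to within $\e$ of $f(T_a)$, and if $f(T_a)$ is far from $T_a$ no $C^0$-small perturbation can make $T_a$ a short cycle. (A concrete obstruction: if $f$ is $C^0$-close to a constant map onto a point $c$, every periodic orbit of every $\e$-perturbation lies inside the $2\e$-ball around $c$, so you cannot hope to produce two periodic shrinking sets at mutual distance $>2/q$; any two periodic orbits you can create are \emph{close together}, not far apart as your diameter argument requires.) This is exactly the phenomenon the paper's two proofs are designed to sidestep: for $\HM$ surjectivity forbids the whole manifold from collapsing into $K$, and for $\C$, where it can collapse, the non-existence of physical measures (Theorem \ref{theoremAA}) does the work instead. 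Without one of those inputs, your construction as written does not establish two distinct invariant measures.
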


\begin{proof}
First, let us argue in $\C$. Recall that Lebesgue a.e.\ $x \in M$ belongs to the set $AA_1$ defined by Equality \eqref{eqnEquationAA}. We claim that
if $f \in \C $ is typical,   then
 for any $\e > 0$ and any  $x_0 \in AA_1$ the \nolinebreak set \linebreak $\{x \in AA_1: \mu_x \neq \mu_{x_0} \mbox{ and } d(\mu_x, \mu_{x_0}) < \e\}$ has positive Lebesgue measure.

To prove the claim we argue by  contradiction, assume that there exists $x_0 \in AA_1$ and $\e >0$ such that \begin{equation}
\label{eqn08}
\mbox{Leb}\{x \in AA_1\colon \mu_x \neq \mu_{x_0} \mbox{ and } d(\mu_{x_0}, \mu_x) < \e\} = 0.\end{equation}
Since $\mu_{x_0}$ is pseudo-physical,  by definition we have
$$\mbox{Leb}(\{x \in M\colon d(p\omega (x), \mu_{x_0}) < \e\}) >0.$$
Since $Leb(AA_1) = 1$ we deduce that
$$\mbox{Leb}(\{x \in AA_1\colon d(\mu_x, \mu_{x_0}) < \e\}) >0.$$
Combining this last assertion with Inequality (\ref{eqn08}), we obtain that
$$\mbox{Leb}(\{x \in AA_1\colon  \mu_x = \mu_{x_0}) \}) >0.$$
So, $\mu_{x_0}$ is a  physical measure, contradicting  Theorem \ref{theoremAA}, the claim is proven.

Since the measures $\mu_x $ are $f$-invariant for all $x \in AA_1$, we deduce  from the claim that $f$ is not uniquely ergodic.

Now let us prove Theorem \ref{TheoremAddedNonUniqErgodic} for typical $f \in \HM$.
From Theorem \ref{theoremLebesgue-a-e-isInShrinkingInterval}, there exists a periodic shrinking set $I$ {with arbitrarily small diameter}. Denote by $p$ the period of $I$, and by $x_0 \in I$ a periodic point of period $p$.
 Recall that $\mbox{diam}(f^j(I)) < \mbox{diam}(I)$.
Besides the sets $f^i(\bar I)$ are pairwise disjoint for $j \in \{0,\dots,p-1\}$, $f^p (\bar I) \subset I$,
and this inclusion is strict since $\mbox{diam}(f^p(I)) < \mbox{diam}(I)$. Therefore,    if $\mbox{diam}(I)$ is small enough, then
\begin{equation}\nonumber
\label{eqnKneqM} K:= \bigcup_{j=1}^{p} f^j(\overline I) \neq M.
\end{equation}

Denote by $\nu$ the atomic invariant measure supported on the periodic orbit of $x_0$.  We clain that
there exists $\delta>0$  such that for any Borel probability measure $\mu$ (not necessarily invariant), if $\mbox{dist}(\mu, \nu)< \delta$, then $\mu(I) >0$.

To prove the claim, we argue by contradiction. Assume that there exists $\mu_n \rightarrow \nu$ such that $\mu_n(I)=0$. Consider $\psi:M \mapsto [0,1]$ defined by
\begin{equation}
\label{eqn-psi}
\psi(x):=\frac{\mbox{dist}(x, M \setminus I)}{\mbox{dist}(x , f^p(\overline I)) + \mbox{dist}(x, M \setminus I) }.\end{equation}
This function is continuous and satisfies $\psi|_{f^p(\overline I)} =1, 0 < \psi(x) <1$ if $x \in I \setminus f^p(\overline I))$ and $\psi(x)= 0$ if $x \not \in I$. Since $\mu_n(I)=0$, we obtain $\int \psi \, d \mu_n =0$, and taking the weak$^*$-limit of $\mu_n$, we deduce $\int \psi \, d \nu=0$; hence $\psi=0$ $\nu$-a.e., which contradicts that $\nu(I) = 1/p >0$ and $\psi>0$ for all $x \in I$, finishing the proof of the claim.

Now, let us prove that $f$ is not uniquely ergodic. Arguing by contradiction, if $\nu$ were the unique invariant measure, then  $(1/n) \sum_{j=0}^{n-1} \delta_{f^j(x)}$ would converge uniformly to $\nu$ for all $x \in M$ as $n \rightarrow + \infty$. Therefore, there would exist $n_0 \geq 1$ such that
$$\mbox{dist} \Big(\nu, (1/n_0) \sum_{j=0}^{n_0-1} \delta_{f^j(x)}        \Big)< \delta \ \ \forall \ x \in M.$$
Applying  the claim for all $x \in M$, there would exist $0 \leq j = j(x) \leq n_0-1$ such that $f^j(x) \in I$. Since $I$ is shrinking periodic, we would deduce that $f^{n_0}(x) \in K$  for all $x \in M$. Therefore, applying assertion (\ref{eqnKneqM}), the image of $M$ by the homeomorphism $f^{n_0}: M \mapsto M$ would not be $M$, which is a contradiction.
\end{proof}

Now, we state the main lemma to be used to prove Theorem \ref{Theorem-main2}.

\begin{lemma} \label{Lemma-ConvexCombination}
Suppose that $f$ is a typical map in $\C$ (resp.\ $\HM$),
 $\mu_1$ is   $f$-invariant and supported on  $K = \bigcup_{j= 0}^{p-1} f^j(\I)  $, where $  I$ is a periodic shrinking set of period $p $, and that  $\mu_2$ is   $f$-invariant such that $\mu_2(K)=0$. Then, no convex combination $ \nu= \lambda \mu_1 + (1 - \lambda) \mu_2$  with $0 < \lambda < 1$  is pseudo-physical.
\end{lemma}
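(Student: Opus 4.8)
The plan is to show that a nontrivial convex combination $\nu=\lambda\mu_1+(1-\lambda)\mu_2$ ($0<\lambda<1$) cannot be pseudo-physical because any point whose empirical measure is close to $\nu$ would have to enter the compact forward-invariant set $K$ deep enough to get permanently trapped there, forcing its empirical measure to live entirely on $K$; but $\nu(K)=\lambda<1$, so no such approximation is possible. The only external inputs are Proposition \ref{propositionAA_1} (which describes $\O_f$ through empirical measures of individual points, for typical $f$) and Lemma \ref{lemmaInvMeasOnShrinkingIntervals}.

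First I would record two elementary facts about $K=\bigcup_{j=0}^{p-1}f^j(\overline I)$. It is compact, being a finite union of continuous images of the compact simplex $\overline I$. And it is forward-invariant, $f(K)\subseteq K$: indeed $f\big(f^j(\overline I)\big)=f^{j+1}(\overline I)\subseteq K$ for $0\le j\le p-2$, while $f\big(f^{p-1}(\overline I)\big)=f^p(\overline I)\subseteq I\subseteq K$ since $I$ is a periodic shrinking set. Moreover $I$, being open and contained in $K$, satisfies $I\subseteq\operatorname{int}(K)$. Next I compute $\nu$ on these sets. Since $\mu_1$ is supported on $K$ we get $\mu_1(K)=1$, and by hypothesis $\mu_2(K)=0$, so $\nu(K)=\lambda<1$. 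On the other hand, Lemma \ref{lemmaInvMeasOnShrinkingIntervals} applied to $\mu_1$ gives $\mu_1\big(f^p(\overline I)\big)=\mu_1(\overline I)=\mu_1(K)/p=1/p$; since $f^p(\overline I)\subseteq I$ this yields $\mu_1(I)\ge 1/p>0$, and as $\mu_2(I)=0$ (because $I\subseteq K$) we obtain $\nu(\operatorname{int}(K))\ge\nu(I)\ge\lambda/p>0$. It is important here to use $f^p(\overline I)\subseteq I$, not merely $f^p(\overline I)\subseteq\overline I$, so that $\mu_1$ genuinely charges the open set $\operatorname{int}(K)$ and not only its boundary.

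Now suppose, for contradiction, that $\nu\in\O_f$. By Proposition \ref{propositionAA_1} there is a sequence $x_n\in AA_1$ with $\mu_{x_n}\to\nu$ weakly$^*$. Since $\operatorname{int}(K)$ is open, the portmanteau inequality gives $\liminf_n\mu_{x_n}(\operatorname{int}(K))\ge\nu(\operatorname{int}(K))>0$, so $\mu_{x_n}(\operatorname{int}(K))>0$ for all large $n$; fix one such $n$. Because $x_n\in AA_1\subseteq AA$, the measure $\mu_{x_n}$ is the weak$^*$ limit of $\frac1m\sum_{j=0}^{m-1}\delta_{f^j(x_n)}$, so applying the portmanteau inequality once more to the open set $\operatorname{int}(K)$ forces $f^{j_0}(x_n)\in\operatorname{int}(K)$ for some $j_0$ (otherwise every such empirical average would vanish on $\operatorname{int}(K)$). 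Forward-invariance of $K$ then gives $f^j(x_n)\in K$ for all $j\ge j_0$, hence, using that $M\setminus K$ is open,
$$\mu_{x_n}(M\setminus K)\ \le\ \liminf_m\frac1m\#\{\,0\le j<m:\ f^j(x_n)\notin K\,\}\ \le\ \liminf_m \frac{j_0}{m}\ =\ 0,$$
so $\mu_{x_n}(K)=1$. But $K$ is closed and $\mu_{x_n}\to\nu$, whence $\nu(K)\ge\limsup_n\mu_{x_n}(K)=1$, contradicting $\nu(K)=\lambda<1$.

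I do not expect a genuine obstacle. The key observation is simply that once an orbit enters $\operatorname{int}(K)$ it can never leave $K$, so any empirical-limit measure that puts positive mass on $\operatorname{int}(K)$ is actually carried by $K$; and $\nu$ is deliberately built to put mass $\lambda/p>0$ on $\operatorname{int}(K)$ yet only mass $\lambda<1$ on $K$. The one thing requiring attention is the bookkeeping with the portmanteau theorem — using a lower bound on the open set $\operatorname{int}(K)$ and an upper bound on the closed set $K$ (equivalently a lower bound on the open set $M\setminus K$) — together with the point made above that one must invoke $f^p(\overline I)\subseteq I$ so that $\mu_1$ really charges the \emph{interior} of $K$.
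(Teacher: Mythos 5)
Your proof is correct, and it takes a genuinely different route from the paper's. The paper constructs an explicit continuous ``bump'' function $\psi$ that equals $1$ on $f^p(\overline I)$, is positive on $I$, and vanishes off $I$; it then computes $\int\psi\,d\nu=\lambda/p\in(0,1/p)$, chooses $\e$ so that $d(\mu,\nu)<\e$ forces $\int\psi\,d\mu\in(0,1/p)$, shows that for $x\in AA$ this forces the orbit to enter $I$, and finally applies Lemma~\ref{lemmaInvMeasOnShrinkingIntervals} a \emph{second} time (to $\mu_x$) to get $\int\psi\,d\mu_x=1/p$, a contradiction. Thus the paper concludes $A'_\e(\nu)=\emptyset$, hence $\nu\notin\O_f$, by a numerical clash at the level of the single point $x$. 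You instead work directly with the sets via the portmanteau inequalities: you approximate $\nu$ by $\mu_{x_n}$ with $x_n\in AA_1$ (Proposition~\ref{propositionAA_1}), use the open-set lower bound to force an orbit iterate into $\operatorname{int}(K)$, invoke forward-invariance of $K$ to trap the tail of the orbit and conclude $\mu_{x_n}(K)=1$, and then use the closed-set upper bound to get $\nu(K)=1$, contradicting $\nu(K)=\lambda<1$. The dynamical core --- once an orbit enters $I\subset\operatorname{int}(K)$ it never leaves $K$, so any $AA$-limit measure of such an orbit lives on $K$ --- is identical, but your route replaces the explicit test function and the second application of Lemma~\ref{lemmaInvMeasOnShrinkingIntervals} with portmanteau estimates, landing the contradiction on $\nu(K)$ directly; this is a bit shorter and more conceptual, at the cost of three uses of portmanteau where the paper uses none explicitly. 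You are also right to insist on $f^p(\overline I)\subset I$ (not merely $\subset\overline I$) so that $\mu_1$ actually charges the open set $I$ and hence $\operatorname{int}(K)$, which is the only place Lemma~\ref{lemmaInvMeasOnShrinkingIntervals} is genuinely needed in your version.
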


\begin{proof} Since $I$ is a shrinking periodic set with period $p$,  it is is open, $\overline{I}$ is an $m$-simplex, and $f^p(\overline{I}) \subset I$. Consider the continuous real function $\psi:M \rightarrow [0,1]$ defined by Equality (\ref{eqn-psi}).
It satisfies
 $\psi|_{f^p(\overline I)}= 1$, $0 < \psi(x) <1$ if $x  \in I \setminus f^p(\overline I) $, and $\psi(x) = 0$ if $x \not \in I$.

Since $\mu_1 $ is supported on $K =   \bigcup_{j= 0}^{p-1} f^j(\overline I) $,  we can apply Lemma \ref{lemmaInvMeasOnShrinkingIntervals} to deduce that $$\mu_1(\overline I) = \mu_1 (f^p(\overline I)) = \frac{1}{p}; \ \ \mbox{ hence }  \mu_1 (I \setminus f^p(\overline I)) = 0.  $$ We obtain $
  \int \psi \, d \mu_1 = \mu_1 (f^p(\overline I)) =  {1}/{p} $ and $ \int \psi \, d \mu_2 = \mu_2 (I) = 0.$
  Therefore
 $ 0 <\int \psi \, d \nu =  \lambda /p  <  {1}/{p}.  $

Choose $\e >0$  such that for any measure $\mu$, if $d(\nu, \mu) < \e$, then $\int \psi \, d \mu > 0$ and $\int \psi \, d \mu <   1 /p. $

Consider the set $$A'_{\e}(\nu) \defeq    \{ x \in AA: \mbox{ dist} (\mu_x, \nu) < \e\},$$
where the set $AA$ is defined in Equality (\ref{eqnEquationAA}).
We claim that  the set $A'_{\e} (\nu)$ is empty. Arguing by contradiction, assume that there exists $x \in A'_{\e}(\nu)$.
Then,  from the choice of $\e$, we have   \begin{equation}
   \label{eqn07}0 <\int \psi \, d \mu_x < 1/p, \end{equation} and  from (\ref{eqn-mu_x}) we deduce that there exists $n_0 \geq 1$ such that $$ \frac{1}{n} \sum_{j= 0} ^{n-1} \psi (f^j(x)) = \int \psi \, \Big (\frac{1}{n} \sum_{j= 0}^{n-1} \delta_{f^j(x)}
   \Big )  >0 \ \forall \ n \geq n_0.$$
Thus,  there exists $n_1 \geq 1$ such that $\psi(f^{n_1}(x))    >0$; hence $f^{n_1}(x) \in I$. This implies that  the future orbit of $f^{n_1}{x}$ is contained in $K$. Hence,  $\mu_x$ is supported on $K$. Since $\mu_x$ is $f$-invariant, we apply Lemma \ref{lemmaInvMeasOnShrinkingIntervals} to deduce that $\mu_x (f^p(\overline I)) = 1/p$ and $\mu_x(I \setminus f^p(\overline I)) = 0$. Therefore,
$$\int \psi \, d \mu_x = \mu_x(f^p(\overline I)) = \frac{1}{p},$$
contradicting  \eqref{eqn07}. We have proved that $A'_{\e} (\nu) $ is   empty.

The definition of  pseudo-physical measure $\mu$   asserts that  $$\mbox{Leb}(A_{\e}(\mu)) >0 \ \ \forall \ \e >0. $$
But \eqref{eqAA} implies that $Leb(A'_\e(\nu)) = Leb(A_\e(\nu))$, which  equals zero because
 $A'_{\e}(\nu) = \emptyset$.  We conclude that $\nu$ is not  pseudo-physical.
\end{proof}

\subsection{End of the proof of Theorem \ref{Theorem-main2}}

{
\begin{proof}
From \cite[Theorem 1.3]{CE1} the set $\O_f$ is closed. Let us prove that  its interior  in ${\mathcal M}_f$ is empty.
Fix $\mu \in \O_f$.  Since a typical map is not uniquely ergodic (recall Theorem \ref{TheoremAddedNonUniqErgodic}), there exists an ergodic measure $\nu \neq \mu$. Denote by $\e:= d(\mu_x, \nu)>0$.
Consider an arbitrary $\delta \in (0,\e/2)$.

From Proposition \ref{propositionAA_1}, we can find $x_0 \in AA_1$ such that $d (\mu_{x_0},\mu) < \delta$. So, to prove that $\mu$  does not belong to the interior of ${\mathcal O}_f$ in ${\mathcal M}_f$, it is enough to prove that the $\delta$-neighborhood of $\mu_{x_0}$ in ${\mathcal M}_f$ is not contained in ${\mathcal O}_f$. To do that, it is enough to find a sequence $\{\mu_n\}_n \subset {\mathcal M}_f$ converging to $\mu_{x_0}$ and such that $\mu_n \not \in {\mathcal O}_f$.

Applying   Theorem \ref{theoremLebesgue-a-e-isInShrinkingInterval} and Equality (\ref{eqAA1}), and taking into account the second part of Proposition \ref{propositionAA_1},   the point $x_0$ could  be chosen to belong  to arbitrarily small eventually periodic shrinking sets. Therefore, for any fixed $q \in \mathbb{N}^+$ (that will be chosen later),  the probability measure $\mu_{x_0}$ is supported on the orbit of a periodic shrinking set  $I_q$ of  diameter smaller than $1/q$. Denote by $p$ the period of $I_q$, and denote
$$K = \bigcup_{j=0}^{p-1} f^j(\overline I_q).$$
We have $\mu_{x_0}(K)=1$. So,
applying Lemma \ref{lemmaInvMeasOnShrinkingIntervals} we deduce $$\mu_{x_0} (f^j(\overline I))= {1}/{p} \ \ \ 0 \leq j \leq p-1.$$

Applying Lemma \ref{lemma-dist}, construct $q \geq 1$ such that if $\mu$ is a probability measure satisfying $\mu(f^j(\overline I)) = \mu_x(f^j(\overline I)) = 1/p$ with  $\mbox{diam}(f^j(\overline I)) <1/q$ for all $0 \leq j \leq p-1$, then $d(\mu, \mu_x)< \delta$. Since $d(\mu, \nu)= \e > \delta$, we deduce that $\nu(f^j(\overline I)) \neq 1/p$ for some $j$. But applying Lemma \ref{lemmaInvMeasOnShrinkingIntervals} $\nu(f^j(\overline I)) = \nu(K)/p$ for all $j$, with $\nu(K) \in \{0,1\}$. Since $\nu$ is ergodic, we conclude that $\nu(K)=0$. Therefore, applying Lemma \ref{Lemma-ConvexCombination}, $\mu_n:=\lambda_n \mu_x + (1- \lambda_n) \nu \in {\mathcal M}_f \setminus {\mathcal O}_f$ for all $0 <\lambda_n<1$.
Taking $\lambda_n \rightarrow 1^-$, we obtain $\mu_n \rightarrow \mu_x$, with $\mu_n \in {\mathcal M}_f \setminus {\mathcal O}_f$, as wanted.
\end{proof}
}

\section{Pseudo-physical,  ergodic and periodic measures.}\label{sec5}
A \em  $\delta$-pseudo-orbit of $f$ \em is a sequence $\{y_n\}_{n \in  \mathbb{N}} \subset M$ such that $$dist(f(y_n), y_{n+1} ) < \delta \ \ \forall \ n \in \mathbb{N}.$$
A \em $\delta$-pseudo-orbit  $\{y_n\}_{n \in  \mathbb{N}} $ is  periodic \em with period $p \geq 1$, if $$y_{n+p}= y_n \ \ \ \forall \ n \in \mathbb{N}.$$
A  map $f \in \C$ has the   \emph{periodic shadowing property} if for all $\e > 0$, there exists $\delta>0$ such that,  if $\{y_n\}_{n \in \mathbb{N}}$ is any periodic $\delta$-pseudo-orbit, then, at least one  periodic orbit $\{f^n(x)\}_{n \in \mathbb{N}}$ satisfies
$$dist(f^n(x), y_n) < \e \ \ \ \forall \ n \in \mathbb{N}.$$

We will also use the following result, which is the accumulation of many authors' work.
\begin{theorem}
\label{TheoremOprocha&als}
Typical maps $f \in \C$ (resp.\  $\HM$) have the periodic shadowing property.
\end{theorem}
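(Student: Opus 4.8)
The plan is to combine known genericity results with the double-cover trick already used for Theorem \ref{theoremAA}. It suffices to exhibit a residual set $\mathcal R \subset \C$ (resp.\ $\HM$) all of whose elements have the periodic shadowing property, since then Baire's theorem finishes the proof. I would take $\mathcal R = \mathcal R_1 \cap \mathcal R_2$, where $\mathcal R_1$ is a residual set of maps with the ordinary shadowing property and $\mathcal R_2$ is a residual set of maps whose chain recurrent set $\mbox{CR}(f)$ is ``tame'', and then check by hand that $f \in \mathcal R_1 \cap \mathcal R_2$ has periodic shadowing. Working with the intersection of two residual $G_\delta$ sets sidesteps the question of whether the set of maps with periodic shadowing is directly open or $G_\delta$.

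The first ingredient is the $C^0$-genericity of the plain shadowing property: for homeomorphisms of a compact manifold without boundary this is the theorem of Pilyugin and Plamenevskaya \cite{PP}, and the analogue for continuous maps is established in \cite{Oprocha&als} (see also \cite{O}); this produces $\mathcal R_1$. The second ingredient is the description of $C^0$-generic chain recurrence due to Hurley and to Akin--Hurley--Kennedy \cite{AHK,H1,H2}: for generic $f$ one has $\mbox{CR}(f) = \overline{\mbox{Per}(f)}$, the periodic points are dense in $\mbox{CR}(f)$ in a uniform way, and the dynamics on each chain component is rigid; this produces $\mathcal R_2$. Given $f \in \mathcal R_1 \cap \mathcal R_2$ and $\e > 0$, I would take $\delta$ witnessing shadowing with accuracy $\e/3$; a compactness argument shows that after shrinking $\delta$ every periodic $\delta$-pseudo-orbit lies in a prescribed small neighborhood of $\mbox{CR}(f)$, hence is $O(\delta)$-close to a periodic $\delta$-pseudo-orbit made of periodic points of $f$ lying in a single chain component; shadowing that pseudo-orbit and using the rigidity of $f$ on the chain component to close the shadowing point up into a genuine periodic orbit would then give an $\e$-shadowing periodic orbit.

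The main obstacle will be exactly this closing-up step --- which is why the statement rests on ``the accumulation of many authors' work''. Plain shadowing by itself does \emph{not} upgrade a periodic pseudo-orbit to a periodic orbit: doing so by comparing the two shadowing points $x$ and $f^p(x)$ would require an expansivity constant, and by Corollary \ref{corollaryStronglyNonExpansive} typical maps are at the opposite extreme (Lebesgue-a.e.\ strongly non positively expansive). The mechanism replacing expansivity is the fine generic structure of $\mbox{CR}(f)$ from \cite{AHK,H1,H2}: generically $\mbox{CR}(f)$ splits into chain components on each of which $f$ is rigid enough --- periodic points dense, dynamics close to a permutation of small pieces --- that a periodic pseudo-orbit confined near such a component is $\e$-shadowed by a periodic orbit. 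I expect verifying this rigidity and carrying out the resulting closing-up argument to be the bulk of the work.

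It remains to drop the hypotheses $\partial M = \emptyset$ (and, for homeomorphisms, $\dim M \ge 2$). For the boundary case I would repeat the construction of the proof of Theorem \ref{theoremAA}: let $\mathcal V \subset \C$ be the open dense set of maps with $f(M) \subset M \setminus \partial M$, pass to the double cover $D(M)$ and to the maps $g$ on $D(M)$ with $g|_{M_1} = f$, and transfer openness and density between $\C$ and $C(D(M))$ exactly as there. The extra observation is that if $f \in \mathcal V$ then $f(M)$ is a compact subset of the open set $\mbox{int}(M)$, hence lies at distance $\eta > 0$ from $\partial M$; since a periodic $\delta$-pseudo-orbit satisfies $y_0 = y_p$ and $y_{j+1}$ is within $\delta$ of $f(y_j) \in f(M)$, for $\delta < \eta$ every point of such a pseudo-orbit is at distance $\ge \eta - \delta$ from $\partial M$. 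Thus such pseudo-orbits, viewed inside $M_1 \subset D(M)$, stay well away from the gluing locus, and for $\e$ small their $\e$-shadowing periodic orbits of $g$ remain in $M_1$ and descend to periodic orbits of $f$ in $M$. Finally, the one-dimensional homeomorphism case is immediate, since Morse--Smale circle homeomorphisms are dense and $C^0$-stable and trivially have the periodic shadowing property.
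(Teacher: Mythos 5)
The paper does not prove this theorem; its ``proof'' is a one-line citation. The references it invokes, \cite[Theorem~1.2]{Oprocha&als} and \cite[Theorem~1]{PP} (together with the earlier special cases \cite{Y,O}), already assert \emph{periodic} shadowing as a generic property -- not just plain shadowing -- and \cite{Oprocha&als} in particular is stated for compact topological manifolds with or without boundary. So the double-cover reduction and the separate treatment of the one-dimensional homeomorphism case that you build are not part of the paper's argument; the paper simply delegates everything to the cited literature, which already covers those cases.

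Your route is genuinely different in its core mechanism, and that is where the gap is. You take as given only plain shadowing from \cite{PP,Oprocha&als}, and propose to manufacture periodic shadowing by combining it with the generic description of $\mathrm{CR}(f)$ from \cite{AHK,H1,H2} plus a ``closing-up'' step on chain components. You are right that plain shadowing by itself cannot be upgraded to periodic shadowing -- the obvious comparison of the two candidate shadowing points would require an expansivity constant, which Corollary~\ref{corollaryStronglyNonExpansive} rules out generically. But the fix you propose does not obviously work. Generic chain recurrence gives $\mathrm{CR}(f)=\overline{\mathrm{Per}(f)}$, a Cantor set on which the dynamics is wild rather than ``close to a permutation of small pieces''; density of periodic points in $\mathrm{CR}(f)$ near the pseudo-orbit does not by itself produce a \emph{single} periodic orbit whose entire trajectory tracks the pseudo-orbit, precisely because nearby periodic points can have wildly divergent orbits in the absence of expansivity. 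You flag this yourself (``I expect verifying this rigidity and carrying out the resulting closing-up argument to be the bulk of the work''), and that flagged step is the actual content of the theorem -- it is not a routine verification. The way the literature resolves it is not abstract at all: the proofs in \cite{PP} and \cite{Oprocha&als} are combinatorial, following the pseudo-orbit through a handle decomposition or fine triangulation; a \emph{periodic} pseudo-orbit traces a \emph{cycle} in the combinatorial structure, and a Brouwer fixed-point argument inside the corresponding chain of cells produces the periodic shadowing point. Periodic shadowing is thus built into the construction, not deduced afterwards from the plain shadowing statement. If you want to reprove the theorem rather than cite it, you should imitate that combinatorial construction (which is also in the spirit of the shrinking-set machinery of Section~\ref{sec3}), not try to bootstrap from plain shadowing plus Hurley's $\mathrm{CR}(f)=\overline{\mathrm{Per}(f)}$.

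Two smaller remarks. Your observation that a periodic $\delta$-pseudo-orbit of an $f$ with $f(M)\subset M\setminus\partial M$ stays a definite distance from $\partial M$ (because every point $y_j$, including $y_0=y_p$, lies within $\delta$ of $f(M)$) is correct and is exactly the kind of control needed to make the double-cover transfer work, so that part of your plan is sound even if, as noted, it is unnecessary here. For the circle, the ``Morse--Smale homeomorphisms are $C^0$-dense and $C^0$-stable'' shortcut is shakier than you make it sound; the clean reference for $S^1$ is Yano \cite{Y}, which the paper already cites.
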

\begin{proof} The statement follows from \cite[Theorem 1.2]{Oprocha&als} and \cite[Theorem 1]{PP}.  Earlier special cases where treated in \cite{Y} and \cite{O}.
\end{proof}

Recall that ${\mathcal E}_f$ denotes the set of ergodic measures, and $\mbox{Per}_{f}$ denotes the set of invariant measures supported on periodic orbits of $f$.
As a consequence of Theorem \ref{TheoremOprocha&als}:
\begin{corollary}
\label{corollaryOprocha&als}
For a typical map $f \in \C$ (resp.\  $f \in \HM$)  we have $\mathcal E_f \subset \overline{\mbox{Per}_{f}}. $
\end{corollary}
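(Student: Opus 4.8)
The plan is to show that an arbitrary ergodic measure $\mu \in {\mathcal E}_f$ can be weak$^*$-approximated by measures supported on periodic orbits, using the periodic shadowing property guaranteed by Theorem \ref{TheoremOprocha&als} together with Birkhoff's ergodic theorem. First I would fix $\mu \in {\mathcal E}_f$ and a continuous function $\phi \colon M \to \R$ (or, working directly with the metric $d$ of subsection \ref{s2}, a finite family $\Psi_1,\dots,\Psi_N$), and fix $\e > 0$. By the uniform continuity of $\phi$ on the compact manifold $M$, choose $\e' > 0$ so that $dist(x,y) < \e'$ forces $|\phi(x) - \phi(y)| < \e$. Apply Theorem \ref{TheoremOprocha&als} to obtain $\delta > 0$ so that every periodic $\delta$-pseudo-orbit is $\e'$-shadowed by a genuine periodic orbit of $f$.

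Next I would produce a suitable long periodic $\delta$-pseudo-orbit. By Birkhoff's ergodic theorem applied to $\mu$, $\mu$-a.e.\ point $z$ satisfies $\frac1n\sum_{j=0}^{n-1}\delta_{f^j z} \to \mu$ weak$^*$; pick such a $z$ and a large $n$ for which $d\big(\frac1n\sum_{j=0}^{n-1}\delta_{f^j z},\mu\big) < \e$. Set $y_j := f^j(z)$ for $0 \le j \le n-1$ and close the orbit up by declaring $y_{j+n} = y_j$. The only ``bad'' jump is from $y_{n-1} = f^{n-1}(z)$ back to $y_0 = z$, where we need $dist(f(y_{n-1}), y_0) = dist(f^n(z), z) < \delta$. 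To arrange this one uses the recurrence furnished by ergodicity: the set of $n$ for which $f^n(z)$ returns to within $\delta$ of $z$ has positive density (Poincaré recurrence), so such an $n$ can be chosen as large as we like, simultaneously with the Birkhoff estimate. Thus $\{y_j\}_{j\in\N}$ is a periodic $\delta$-pseudo-orbit of period $n$.

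Now shadowing gives a genuine periodic point $x$ (of some period dividing or related to $n$ — more precisely the shadowing orbit $\{f^j(x)\}$ satisfies $dist(f^j(x), y_j) < \e'$ for all $j$, hence $dist(f^{j+n}(x), f^j(x)) < 2\e'$, and after enlarging the shadowing constant appropriately or passing to the periodic-shadowing version one gets a true periodic orbit). Let $\nu \in \mbox{Per}_f$ be the invariant measure equidistributed on this periodic orbit, of period $p$. For $n = \ell p + r$ a standard estimate (exactly as in equations \eqref{eqcont}–\eqref{eqconv} of the proof of Corollary \ref{theoremAAa)}) controls $\big|\int \phi\, d\nu - \frac1n\sum_{j=0}^{n-1}\phi(f^j x)\big|$ up to $O(\e + p/n)$, while the pointwise closeness $dist(f^j x, y_j) < \e'$ gives $\big|\frac1n\sum_{j=0}^{n-1}\phi(f^j x) - \frac1n\sum_{j=0}^{n-1}\phi(y_j)\big| < \e$, and $\frac1n\sum_{j=0}^{n-1}\phi(y_j) = \frac1n\sum_{j=0}^{n-1}\phi(f^j z)$ is within $\e$ of $\int\phi\,d\mu$. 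Combining, $|\int\phi\,d\nu - \int\phi\,d\mu|$ is as small as desired, so $\mu \in \overline{\mbox{Per}_f}$.

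The main obstacle is the bookkeeping around the period: periodic shadowing as stated shadows a periodic pseudo-orbit by a genuine \emph{periodic} orbit, but a priori the period of the shadowing orbit need not equal $n$, and if one only has ordinary (non-periodic) shadowing one must work to close up the shadowing orbit or to control the length-$n$ average along an orbit whose period is unknown. Using the \emph{periodic} shadowing property of Theorem \ref{TheoremOprocha&als} resolves this, but care is still needed to ensure $n$ can be chosen large enough both for the Birkhoff/recurrence estimates and so that the error term $p/n$ coming from the truncation $n = \ell p + r$ is negligible; this is where the argument must be assembled carefully rather than invoked routinely.
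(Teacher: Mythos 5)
Your strategy is the same as the paper's: pick a $\mu$-generic, recurrent point $z$, close a long finite piece $\{z, f z, \dots, f^{n-1}z\}$ into a periodic $\delta$-pseudo-orbit, invoke Theorem \ref{TheoremOprocha&als} to $\e'$-shadow it by a genuine periodic orbit, and argue that the periodic measure $\nu$ on that orbit is weak$^*$-close to $\mu$. The first three steps are fine and coincide with what the paper does (the paper calls the pseudo-orbit length $p$ and the shadowing period $q$; you call them $n$ and $p$).

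The gap, which you notice but do not close, is in the final comparison. You propose to relate $\int\phi\,d\nu = \frac{1}{p}\sum_{j=0}^{p-1}\phi(f^j x)$ to $\frac{1}{n}\sum_{j=0}^{n-1}\phi(f^j x)$ by writing $n = \ell p + r$ and absorbing an error $O(p/n)$, and you suggest one should ``ensure $n$ is large enough'' to make $p/n$ small. But $p$ is the period of the shadowing orbit produced by the shadowing theorem \emph{after} $n$ is fixed, and there is no a priori control of $p$ in terms of $n$: the shadowing orbit could have period comparable to, or even larger than, $n$ (the shadowing only forces $f^{j+n}x$ to be $2\e'$-close to $f^j x$, not equal). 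So ``choose $n$ large'' is circular, and the $O(p/n)$ error is not negligible as written. The paper sidesteps this entirely by averaging over $np$ iterates, a common multiple of the two periods: the $np$-step empirical of the shadowing orbit equals $\nu$ \emph{exactly} (since $p \mid np$), the $np$-step average of $\phi(y_j)$ equals $\frac{1}{n}\sum_{j=0}^{n-1}\phi(y_j)$ \emph{exactly} (since $n \mid np$), and the termwise shadowing bound plus convexity of weak$^*$ balls gives $d\bigl(\nu,\, \frac{1}{n}\sum_{j=0}^{n-1}\delta_{f^j z}\bigr) < \e_0$ with no truncation term at all. Combined with the Birkhoff estimate $d\bigl(\frac{1}{n}\sum_{j=0}^{n-1}\delta_{f^j z},\, \mu\bigr) < \e_0$, this finishes the proof. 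An equivalent fix in your framework is to let $N \to \infty$ in $\frac{1}{N}\sum_{j=0}^{N-1}\phi(f^j x)$ and $\frac{1}{N}\sum_{j=0}^{N-1}\phi(y_j)$ simultaneously; the former converges to $\int\phi\,d\nu$ and the latter to $\frac{1}{n}\sum_{j=0}^{n-1}\phi(f^j z)$, so the shadowing estimate survives the limit without any need to control $p/n$.
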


\noindent{\bf Remark. }
 The closing lemma and the closeability properties (see for instance \cite[Definition 2.1]{Coudene-S} and \cite[Definitions 4.1 and 4.5]{Gelfert-K}), state  that for all $\epsilon>0$ there exists $\delta>0$ such that, for a finite piece  of orbit $\{f^j(y\}_{0 \leq j \leq p}$ verifying $dist(f^p(y), y) < \delta$,  there exists a periodic point $x$, whose period  is $p$  or at least   $(\epsilon \cdot p)$- near  $p$, satisfying $dist(f^j(y), f^j(x)) < \epsilon$ for all $0 \leq j \leq p$.  It is  known that the closing lemma and the closeability properties imply   ${\mathcal E}_f \subset \overline{\mbox{Per}_f}$ (see for instance  \cite[Lemma 2.2]{Coudene-S} and  \cite[Theorems 4.10]{Gelfert-K}).
 Nevertheless,  we do not know if typical maps in $C^0(M)$  satisfy the closing lemma or the closeability properties.  

\begin{proof}[Proof of Corollary \ref{corollaryOprocha&als}]
From the definition of distance in the weak$^*$ topology of the space of probability measures it is standard to check that   for all $\e_0 >0$, there exists $\e>0$, such that, for any two points $x_1, x_2 \in M$, $$dist(x_1, x_2) < \e \ \ \Rightarrow \ \ d(\delta_{x_1}, \delta_{x_2})< \e_0.  $$

Fix any $\mu  \in {\mathcal E}_f$. Since $\mu$ is  ergodic, we have
$p \omega (x) = \{\mu\}  \ \ \mbox{ for } \mu\mbox{-a.e. } x \in M.$ Fix such a point $x$;  then there exists $n_0 \geq 1$ such that
\begin{equation}
\label{eqn11}
d \left (\frac{1}{n} \sum_{j= 0}^{n-1} \delta_{f^j(x)}, \ \mu \right ) < \e_0 \ \ \ \forall \ n \geq n_0.\end{equation}
Given $\e$, choose $\delta >0$ given by Theorem \ref{TheoremOprocha&als}. By Poincar\'{e} Recurrence Lemma, the point $x$ can be chosen to be recurrent. Thus
$$ d(f^{p - 1} (x), x) < \delta \ \ \mbox{ for some } p \geq n_0.$$
Construct the periodic $\delta$-pseudo-orbit  $\{y_n\}_{n \in \mathbb{N}}$ of period $p$ defined by   $y_n= f^n(x)$ for all $0 \leq n < p$,
$y_{n + p} = y_n$ for all $n \geq 0$.  Applying Theorem \ref{TheoremOprocha&als}, there exists a periodic orbit $\{f^n(z)\}_{n \geq 0}$, such that
$dist (f^n(z), y_n) < \e \ \ \forall \ n \geq 0$.
By construction,  if  $ip \le  n < (i+1)p$    and   $i \ge 0$ then
$dist(f^n(z), f^{n-ip}(x)) < \e $.
Thus, from the choice of $\e$, we obtain
$d(\delta_{f^n(z)}, \delta_{f^{n-ip}(x)}) < \e_0$
Denote by $q$ the period of $z$.
Taking into account that  balls are convex in the
 weak$^*$-distance in the space of probabilities, we deduce
$$d\left ( \frac{1}{qp} \sum_{j= 0 }^{qp-1} \delta_{f^j(z)},  \ \ \frac{1}{qp}  q \cdot \sum_{j= 0 }^{p-1} \delta_{f^j(x)}   \right) < \e_0. $$
For the atomic invariant measure $\nu$ supported on the periodic orbit of $z$, we have
$$\nu = \frac{1}{q} \sum_{j= 0}^{q-1} \delta_{f^j(z)} =  \frac{1}{qp} \sum_{j= 0 }^{qp-1} \delta_{f^j(z)}.$$
Thus,
$$d \left ( \nu ,  \ \ \frac{1}{p} \sum_{j= 0 }^{p-1} \delta_{f^j(x)}   \right) < \e_0.$$
Together with  (\ref{eqn11}), this implies that for any $\e_0$
the given ergodic measure $\mu$ is $2  \e_0$-approximated by  some measure $\nu \in \mbox{Per}_{f}$,
finishing the proof of   Corollary \ref{corollaryOprocha&als}.
\end{proof}

 An invariant measure $\mu$ is called \em infinitely shrinked \em if there exists a sequence $\{I_q\}_{q \geq 0}$ of periodic shrinking intervals $I_q$, of periods $p_q$, such that $\mbox{diam}(I_q) < 1/q$ and $\mu(\bigcup_{i= 1}^{p_q}f^j(\I_q) )= 1$ for all $q \geq 1$.
We denote by $\mbox{Shr}_f \subset {\mathcal M}_f$ the set of  infinitely shrinked  invariant measures.
Define
$$AA_2  \defeq \{ x \in AA_1: \ \  \mu_x \in \mbox{Shr}_f\}. $$
\begin{theorem}
\label{theoremSRB-l=ClosureSrh}
For a typical map $f \in \C$ (resp.\ $f \in \HM$),  $$ \mbox{ Leb}(AA_2)  = 1 \ \ \mbox{ and  } \ \  \O_f  =\overline {\{\mu_x \colon  x \in AA_2\}} =  \overline {\mbox{\em Shr}_f}.$$
\end{theorem}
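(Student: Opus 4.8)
The plan is to prove the two assertions in turn: first $Leb(AA_2)=1$, then — granting that — the triple equality, by comparing $\O_f$ with $\overline{\{\mu_x:x\in AA_2\}}$ via Proposition~\ref{propositionAA_1}, comparing $\overline{\{\mu_x:x\in AA_2\}}$ with $\overline{\mbox{Shr}_f}$ via the trivial inclusion $\{\mu_x:x\in AA_2\}\subseteq\mbox{Shr}_f$, and closing the loop by showing directly that $\mbox{Shr}_f\subseteq\O_f$. For the first assertion, since $Leb(AA_1)=1$ by \eqref{eqAA1}, it suffices to check that for Lebesgue a.e.\ $x$ and every $q\in\N^+$ there is a \emph{periodic} shrinking set $P_q$ of period $p_q$ with $\mbox{diam}(P_q)<1/q$ such that the forward orbit of $x$ eventually enters $P_q$: for then, $P_q$ being periodic shrinking, the orbit tail stays in the closed, forward-invariant block $K_q:=\bigcup_{j=0}^{p_q-1}f^j(\overline{P_q})$, so $\mu_x(K_q)=1$ for all $q$, hence $\mu_x\in\mbox{Shr}_f$, i.e.\ $x\in AA_2$. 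Theorem~\ref{theoremLebesgue-a-e-isInShrinkingInterval} as stated only gives a.e.\ $x$ an \emph{eventually} periodic shrinking set of diameter $<1/q$ through it, but I would use the fact that its periodic core can also be taken small, which is what the construction in that proof delivers: there a.e.\ $x$ lies in some $\lambda_1 T_i$, each set occurring is a homothetic copy $\lambda_1\overline T_i$ of a triangulation simplex of diameter $<\delta<1/q$, and iterating $i\mapsto j(i)$ the set $\lambda_1 T_i$ is eventually mapped into the base $\lambda_1 T_{i_0}$ of a cycle, which is a periodic shrinking set of diameter $<1/q$ (the shrinking condition holding because $g(\lambda_1\overline T_i)\subseteq f_1(\lambda_2\overline T_i)$ has diameter $<\min_{i'}\mbox{diam}(\lambda_1\overline T_{i'})$). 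So I would either re-run that argument keeping track of this, or simply record it in the statement of Theorem~\ref{theoremLebesgue-a-e-isInShrinkingInterval}.

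\emph{The equality $\O_f=\overline{\{\mu_x:x\in AA_2\}}$.} One inclusion is immediate: $AA_2\subseteq AA_1$, $\mu_x\in\O_f$ for $x\in AA_1$ by definition, and $\O_f$ is closed by \cite[Theorem~1.3]{CE1}, so $\overline{\{\mu_x:x\in AA_2\}}\subseteq\O_f$. For the reverse inclusion, fix $\mu\in\O_f$ and $\e>0$; by definition $Leb(A_\e(\mu))>0$, and since $Leb(AA_1)=1$ by \eqref{eqAA1} and $Leb(AA_2)=1$ by the first step, one gets $Leb\{x\in AA_2:\ d(\mu_x,\mu)<\e\}=Leb(A_\e(\mu))>0$; in particular this set is nonempty, so $\mu\in\overline{\{\mu_x:x\in AA_2\}}$.

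\emph{The inclusion $\mbox{Shr}_f\subseteq\O_f$, and conclusion.} Given $\mu\in\mbox{Shr}_f$ and $\e>0$, I would fix $q_0$ as in Lemma~\ref{lemma-dist} for this $\e$, pick from the defining sequence of $\mu$ a periodic shrinking set $I$ of period $p$ with $\mbox{diam}(I)<1/q_0$ and $\mu(K)=1$ for $K:=\bigcup_{j=0}^{p-1}f^j(\overline I)$, and use Lemma~\ref{lemmaInvMeasOnShrinkingIntervals} to get $\mu(f^j(\overline I))=1/p$ for all $j$, the $p$ sets $f^j(\overline I)=\overline{f^j(I)}$ ($0\le j\le p-1$) being pairwise disjoint, connected, closed, of diameter $<1/q_0$ (for $j\ge1$ using $\mbox{diam}(f^j(I))<\mbox{diam}(I)$). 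Since $I$ is nonempty open and $Leb(AA)=1$ by \eqref{eqAA}, the set $I\cap AA$ has positive Lebesgue measure; for $x\in I\cap AA$ the forward orbit of $x$ stays in $K$, so $\mu_x$ is supported on $K$ and, being $f$-invariant, $\mu_x(f^j(\overline I))=1/p$ for all $j$ by Lemma~\ref{lemmaInvMeasOnShrinkingIntervals} again. Then $\mu$ and $\mu_x$ satisfy the hypotheses of Lemma~\ref{lemma-dist}, giving $d(\mu_x,\mu)<\e$, whence $I\cap AA\subseteq A_\e(\mu)$ and $Leb(A_\e(\mu))\ge Leb(I)>0$; as $\e$ is arbitrary, $\mu\in\O_f$, and since $\O_f$ is closed, $\overline{\mbox{Shr}_f}\subseteq\O_f$. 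Combining this with $\{\mu_x:x\in AA_2\}\subseteq\mbox{Shr}_f$ (definition of $AA_2$) and the previous step,
$$\overline{\{\mu_x:x\in AA_2\}}\ \subseteq\ \overline{\mbox{Shr}_f}\ \subseteq\ \O_f\ =\ \overline{\{\mu_x:x\in AA_2\}},$$
so all three sets coincide. The one delicate point in the whole argument is the bookkeeping in the first step — namely that the eventually periodic shrinking sets through a.e.\ point can be chosen with periodic core of diameter $<1/q$; once that is granted, the remaining two steps are a direct assembly of Lemmas~\ref{lemma-dist} and \ref{lemmaInvMeasOnShrinkingIntervals} and Proposition~\ref{propositionAA_1}.
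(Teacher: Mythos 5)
Your proof is correct and follows essentially the same route as the paper's: establishing $Leb(AA_2)=1$ from Theorem~\ref{theoremLebesgue-a-e-isInShrinkingInterval} together with the observation that the periodic cores of the constructed eventually periodic shrinking sets also have diameter $<1/q$, and then closing the cycle $\overline{\{\mu_x : x\in AA_2\}}\subset\overline{\mbox{Shr}_f}\subset\O_f\subset\overline{\{\mu_x : x\in AA_2\}}$ using Lemmas~\ref{lemma-dist} and~\ref{lemmaInvMeasOnShrinkingIntervals}. The only variation is in the last inclusion, where you argue directly from the definition of pseudo-physical (as in Proposition~\ref{propositionAA_1}) while the paper invokes the minimality characterization of $\O_f$ from \cite[Theorem~1.5]{CE1} --- both are fine --- and your explicit flagging of the ``delicate point'' that the periodic core has small diameter is well placed, since the paper asserts $\mbox{diam}(I_q)<\mbox{diam}(J_q)$ in passing even though this follows from the construction in Theorem~\ref{theoremLebesgue-a-e-isInShrinkingInterval}'s proof rather than from the stated definition of eventually periodic shrinking set.
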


\begin{proof}
From Theorem \ref{theoremLebesgue-a-e-isInShrinkingInterval}, Lebesgue-a.e.~$x \in M$ belongs to a sequence of eventually periodic or periodic shrinking sets $J_q$ with $\mbox{diam}(J_q) < 1/q$. Every eventually periodic shrinking set $J_q$  wanders under  $f$ until it drops into a periodic shrinking set $I_q$ with
$ \mbox{diam}(I_q)< \mbox{diam}(J_q)< 1/q$.  By the definition of periodic shrinking set, every point of $I_q$ has all the measures of $p\omega (x) $ supported on the compact set $$K_{x,q} \defeq \bigcup_{j= 0}^{p_q-1} f^j(\overline I_q).$$ In particular for Lebesgue almost all $x \in AA_1$, the limit measure $\mu_x$  defined by   (\ref{eqn-mu_x}), is supported on $K_{x,q}$. Thus, for a.e.\ $x \in AA_1$ we have $\mu_x \in \mbox{Shr}_f$. Taking into account (\ref{eqAA1}), the above assertion implies  that the set $AA_2$ has full Lebesgue measure.

By construction,  $AA_2 \subset AA_1$. So, applying Proposition \ref{propositionAA_1}, we obtain:
$$ \overline {\{\mu_x \colon  x \in AA_2\}} \subset \overline {\{\mu_x \colon  x \in AA_1\}} = \O_f. $$
To obtain the opposite inclusion, we apply \cite[Theorem 1.5]{CE1}):   $\O_f$ is the minimal weak$^*$-compact set of probability measures, that contains $p \omega (x)$ for Lebesgue a.e.~$x$. Since $ \overline {\{\mu_x \colon  x \in AA_2\}}$ is weak$^*$-compact and contains $p\omega_x = \{\mu_x\}$ for Lebesgue almost all $x$ (because $\mbox{Leb}(AA_2) = Leb (M)$), we conclude that
    $$   \overline {\{\mu_x \colon  x \in AA_2\}} \supset \O_f.$$

    The inclusion   $\overline {\{\mu_x \colon  x \in AA_2\}} \subset \overline{\mbox{Shr}_f}$ follows trivially from the definition of the set $AA_2$. Now, let us prove the opposite inclusion. We will prove that every shrinking measure is pseudo-physical.  Let $\mu \in \mbox{Shr}_f$. For any  $\e>0$, choose  $q \geq 1$ as in Lemma \ref{lemma-dist}.  By the definition of shrinking measure, there exists a periodic shrinking set $I'_{q'}$ such that $\mu$ is supported on
    $$K_\mu  = \bigcup_{j= 0}^{p_{q'}-1} f^j(\overline{ I'_{q'}}) $$
 for the periodic shrinking set $I'_{q'}$ of period $p_{q'}$, such that  $\mbox{diam}(\overline {I'_{q'}}) <1/{q'}$; hence  $\mbox{diam}(f^j(\overline I'_{q'})) <1/{q'} \ \ \forall \ 1 \leq j \leq p_{q'}$,
$\mu(K_{\mu}) = 1$.

    Besides, for any point $x \in I'_{q'}$, any  measure in $p\omega_x$ is also supported on $K_{\mu}$. If additionally $x \in AA_1$, then $p\omega_x = \{\mu_x\}$, so  $\mu_x (K_{\mu}) = 1$.
    Finally, applying Lemmas \ref{lemma-dist} and  \ref{lemmaInvMeasOnShrinkingIntervals}, we deduce that
      the measures $\mu \in \mbox{Shr}_f$ and $\mu_x$ given above satisfy $$d(\mu, \mu_x) < \e \ \mbox{ for any } x \in I_q \cap AA_1. $$ Since $Leb (I'_{q'} \cap AA_1) = Leb (I'_{q'}) >0$, the basin $A_\e (\mu)$ has positive Lebesgue measure; namely $\mu $ is pseudo-physical.

    We have shown that every shrinking measure is pseudo-physical. Since the set $\O_f$ of pseudo-physical measures is closed, we conclude $$\overline {\mbox{Shr}_f} \subset \mathcal O_f, $$
finishing the proof of Theorem \ref{theoremSRB-l=ClosureSrh}.
\end{proof}

\begin{theorem} \label{theoremShrIncludedErgodic-SRB-l-includedClosureErgodic}
For any map $f \in \C$ (resp.\  $f \in \HM$), if $\mu \in \mbox{Shr}_f$, then it is ergodic.
\end{theorem}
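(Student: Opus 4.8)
The plan is to reduce ergodicity to a \emph{uniqueness} statement: I claim that if $\mu_1,\mu_2$ are $f$-invariant probability measures such that, for one and the \emph{same} sequence $\{I_q\}_{q\ge1}$ of periodic shrinking sets of periods $p_q$ with $\mathrm{diam}(I_q)<1/q$, both $\mu_1$ and $\mu_2$ are supported on $K_q:=\bigcup_{j=0}^{p_q-1}f^j(\overline{I_q})$ for every $q$, then $\mu_1=\mu_2$. This is the crux; once it is available the theorem drops out immediately. Indeed, let $\mu\in\mathrm{Shr}_f$ with associated sequence $\{I_q\}$, so $\mu(M\setminus K_q)=0$ for all $q$. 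If $\mu$ were not ergodic there would be an $f$-invariant Borel set $A$ with $0<\mu(A)<1$; the conditional measures $\mu_A:=\mu(\,\cdot\,\cap A)/\mu(A)$ and $\mu_{A^c}:=\mu(\,\cdot\,\cap A^c)/\mu(A^c)$ are then $f$-invariant probabilities, and since $\mu(M\setminus K_q)=0$ both of them are supported on every $K_q$; the uniqueness claim forces $\mu_A=\mu_{A^c}$, which contradicts $\mu_A(A)=1\ne0=\mu_{A^c}(A)$.

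To prove the uniqueness claim I would argue by weak$^*$ approximation against a reference measure that does not see $\mu_1$ or $\mu_2$. Fix a continuous $\phi:M\to\R$ and $\e>0$; by compactness of $M$ choose $\delta>0$ with $\mathrm{dist}(x,y)<\delta\Rightarrow|\phi(x)-\phi(y)|<\e$, and pick $q$ with $1/q<\delta$. For $i\in\{1,2\}$, Lemma \ref{lemmaInvMeasOnShrinkingIntervals} applied to the $f$-invariant measure $\mu_i$ and the periodic shrinking set $I_q$ gives $\mu_i(f^j(\overline{I_q}))=\mu_i(K_q)/p_q=1/p_q$ for $0\le j\le p_q-1$. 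The closed sets $f^j(\overline{I_q})$, $0\le j\le p_q-1$, are pairwise disjoint (part of the definition of a periodic shrinking set), each has diameter $<1/q<\delta$ (trivially for $j=0$, and by the shrinking property for $1\le j\le p_q-1$), and their union $K_q$ is $\mu_i$-full. By the Brouwer fixed point theorem $f^{p_q}$ has a fixed point $x_q\in I_q$, which is $f$-periodic of period exactly $p_q$ because its orbit meets the pairwise disjoint sets $f^j(\overline{I_q})$; in particular $f^j(x_q)\in f^j(\overline{I_q})$ for each $j$. Splitting $\int_M\phi\,d\mu_i$ over the disjoint pieces and comparing with the periodic orbit average gives
$$\Bigl|\int_M\phi\,d\mu_i-\frac1{p_q}\sum_{j=0}^{p_q-1}\phi(f^j(x_q))\Bigr|\ \le\ \sum_{j=0}^{p_q-1}\int_{f^j(\overline{I_q})}\bigl|\phi(z)-\phi(f^j(x_q))\bigr|\,d\mu_i(z)\ \le\ \e.$$
Since the orbit average $\frac1{p_q}\sum_j\phi(f^j(x_q))$ does not depend on $i$, this yields $|\int\phi\,d\mu_1-\int\phi\,d\mu_2|\le2\e$; letting $\e\to0$ (with $q$ growing accordingly) gives $\int\phi\,d\mu_1=\int\phi\,d\mu_2$ for all continuous $\phi$, i.e.\ $\mu_1=\mu_2$. (This estimate is essentially the content of Lemma \ref{lemma-dist}, with the sets $f^j(\overline{I_q})$ in the role of the $I_j$ there.)

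The argument uses only continuity of $f$, so it covers both $\C$ and $\HM$ and requires no genericity. The one point that needs care — and the conceptual heart of the proof — is that the approximating measure must be chosen \emph{independently} of $\mu_1$ and $\mu_2$; this is why I evaluate $\phi$ along the orbit of a genuine periodic point $x_q\in I_q$ (available via Brouwer) rather than at arbitrary sample points of the two supports, and why it is convenient that the pieces $f^j(\overline{I_q})$ are genuinely pairwise disjoint. Everything else (the oscillation/mean-value estimate, the conditional-measure bookkeeping) is routine, and I do not expect any essential difficulty.
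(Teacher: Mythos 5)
Your proof is correct and rests on the same two technical pillars as the paper's: Lemma \ref{lemmaInvMeasOnShrinkingIntervals} (to pin down the masses $\mu_i(f^j(\overline{I_q}))=1/p_q$) and, in effect, Lemma \ref{lemma-dist} (the oscillation estimate). The only substantive difference is how you package the reduction to ergodicity. The paper works directly with extremality in the Choquet simplex of invariant measures: it supposes $\mu=\lambda\mu_1+(1-\lambda)\mu_2$ with $0<\lambda<1$, notes that then $\mu_1(K_q)=\mu_2(K_q)=1$, applies the two lemmas to get $d(\mu_i,\mu)<\e$ for all $\e$, and concludes $\mu_1=\mu_2=\mu$. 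You instead go through the equivalent \lq\lq no nontrivial invariant set\rq\rq\ characterization, forming the conditional measures $\mu_A$ and $\mu_{A^c}$ and invoking a uniqueness statement. Both are standard and equivalent; the paper's version is marginally shorter because it never needs to mention invariant sets or conditionals, and because it compares $\mu_i$ to $\mu$ directly rather than routing the comparison through a periodic orbit. Your detour via the Brouwer fixed point $x_q$ is sound (and the observation that the pairwise disjointness of $f^j(\overline{I_q})$ forces the period to be exactly $p_q$ is correct), but it is unnecessary: once you know $\mu_1$ and $\mu_2$ give equal mass $1/p_q$ to each piece of diameter $<1/q$, Lemma \ref{lemma-dist} already yields $d(\mu_1,\mu_2)<\e$ without any reference point. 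You correctly flag this yourself in the parenthetical remark. In short: same proof, slightly different bookkeeping for the ergodicity reduction and one avoidable intermediate object.
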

Before proving Theorem \ref{theoremShrIncludedErgodic-SRB-l-includedClosureErgodic} let us deduce its main consequence:
\begin{corollary} \label{corollaryShrink}
For a typical map $f \in \C$ (resp.\ $f \in \HM$):
$$\O_f  = \overline{\mbox{Shr}_f}\subset \overline{{\mathcal E}_f} =  \overline{\mbox{Per}}_f.$$
\end{corollary}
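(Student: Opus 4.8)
The plan is to obtain the corollary by chaining the three immediately preceding results together with the trivial inclusion $\mbox{Per}_f \subset \mathcal E_f$. The left-hand equality $\O_f = \overline{\mbox{Shr}_f}$ is already part of the statement of Theorem \ref{theoremSRB-l=ClosureSrh} for typical $f$, so nothing new is needed there; what remains is to verify the chain $\overline{\mbox{Shr}_f} \subset \overline{\mathcal E_f}$ and $\overline{\mathcal E_f} = \overline{\mbox{Per}_f}$.

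First I would deduce $\overline{\mbox{Shr}_f} \subset \overline{\mathcal E_f}$ from Theorem \ref{theoremShrIncludedErgodic-SRB-l-includedClosureErgodic}: since every infinitely shrinked invariant measure is ergodic, we have $\mbox{Shr}_f \subset \mathcal E_f$, and passing to weak$^*$-closures (a monotone operation) gives the inclusion. I would emphasize that this step does not use typicality at all, since Theorem \ref{theoremShrIncludedErgodic-SRB-l-includedClosureErgodic} holds for every continuous $f$; hence the resp.-$\HM$ version comes for free.

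Next I would establish $\overline{\mathcal E_f} = \overline{\mbox{Per}_f}$ by proving both inclusions. The inclusion $\overline{\mbox{Per}_f} \subset \overline{\mathcal E_f}$ is immediate from $\mbox{Per}_f \subset \mathcal E_f$ (a measure equidistributed on a single periodic orbit is ergodic), after taking closures. For the reverse inclusion, Corollary \ref{corollaryOprocha&als} gives $\mathcal E_f \subset \overline{\mbox{Per}_f}$ for typical $f$; taking closures and using that the closure of $\overline{\mbox{Per}_f}$ is itself yields $\overline{\mathcal E_f} \subset \overline{\mbox{Per}_f}$. Concatenating everything gives $\O_f = \overline{\mbox{Shr}_f} \subset \overline{\mathcal E_f} = \overline{\mbox{Per}_f}$, which is the claim.

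I do not expect any genuine obstacle here: the corollary is essentially bookkeeping on top of Theorems \ref{theoremSRB-l=ClosureSrh} and \ref{theoremShrIncludedErgodic-SRB-l-includedClosureErgodic} and Corollary \ref{corollaryOprocha&als}, where all the real work lives (identifying $\O_f$ with $\overline{\mbox{Shr}_f}$, proving ergodicity of shrinked measures, and the shadowing-based periodic approximation of ergodic measures). The one point requiring a little care is to keep track of which imported statements need typicality of $f$ and which are fully general, so that the $\C$ and $\HM$ cases are handled uniformly. Incidentally, one could also reach $\O_f \subset \overline{\mbox{Per}_f}$ directly via Theorem \ref{TheoremPeriodicSRB-l}, but routing the argument through $\overline{\mathcal E_f}$ is what produces the sharper chain asserted by the corollary.
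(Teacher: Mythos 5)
Your proposal is correct and follows exactly the route the paper indicates: combine Theorem \ref{theoremSRB-l=ClosureSrh} for the equality $\O_f = \overline{\mbox{Shr}_f}$, Theorem \ref{theoremShrIncludedErgodic-SRB-l-includedClosureErgodic} for $\mbox{Shr}_f \subset \mathcal E_f$, and Corollary \ref{corollaryOprocha&als} together with the trivial inclusion $\mbox{Per}_f \subset \mathcal E_f$ for $\overline{\mathcal E_f} = \overline{\mbox{Per}_f}$; the only work is the bookkeeping you carried out.
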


The corollary immediately follows by combining Corollary \ref{corollaryOprocha&als} with
Theorems  \ref{theoremSRB-l=ClosureSrh} and \ref{theoremShrIncludedErgodic-SRB-l-includedClosureErgodic}.
At the end of the next section we will prove that  for typical maps these sets are all equal.

\begin{proof}[Proof of Theorem \ref{theoremShrIncludedErgodic-SRB-l-includedClosureErgodic}]
Fix $f \in \C$. Suppose $\mu \in \mbox{Shr}_f$, and  $\mu_1, \mu_2 \in \mathcal{M}_f$ such that
\begin{equation}
\label{eqn13}
\mu = \lambda \mu_1 + (1 -\lambda)\mu_2, \ \ \ \mbox{with} \ \  0 < \lambda <1,\end{equation}
We shall prove that $\mu_1 = \mu_2 = \mu$; namely  $\mu$ is extremal in the convex compact set of invariant measures; hence ergodic.

Take arbitrary $\e >0$ and fix $q \geq 1$ as in Lemma \ref{lemma-dist}. By the definition of infinitely shrinking measures, there exists a periodic shrinking set $I_q$, with $\mbox{diam}(I_q) < 1/q$, and period $p_q$, whose $f$-orbit $K_q$ supports $\mu$.  The definition of periodic shrinking set and Lemma \ref{lemmaInvMeasOnShrinkingIntervals} tell us:
$$\mu(f^j(\overline I_q))  = \frac{1}{p_q},  \ \ \ \mbox{diam} (f^j(\overline I_q)) < 1/q \ \ \ \ \forall \ 1 \leq j \leq p_q.$$

Since $\mu(K_q) = 1$,  from  (\ref{eqn13}) we deduce $\mu_1(K_q) = \mu_2(K_q) = 1 $. Applying  Lemma \ref{lemmaInvMeasOnShrinkingIntervals} we obtain
 $$\mu_1(f^j(\overline I_q))  = \mu_2(f^j(\overline I_q))  = \frac{1}{p_q}\ \ \ \ \forall \ 1 \leq j \leq p_q. $$
So,   Lemma \ref{lemma-dist} implies
$d(\mu_1, \mu) < \e,$ and $d(\mu_2, \mu)  < \e$.
Since $\e> 0$ is arbitrary, we conclude that $\mu = \mu_1 = \mu_2$; hence $\mu $ is ergodic.
\end{proof}

\section{All ergodic measures are pseudo-physical.}\label{sec6}
\begin{definition} \em  \label{definition-qShrinking}
Let $q \geq 1$ and $x_0$ be a periodic point with period $r \geq 1$. We call the  (invariant) measure  $$\nu= \frac{1}{r} \sum_{j=0} ^{r-1}\delta_{f^j (x_0)} \in Per_f$$   a \em  $q$-shrinked periodic measure,  \em  if there exists some periodic shrinking set  $I $, with diameter smaller than $1/q$, with period $p \geq 1$ such that $\nu$ is supported on   $K \defeq \bigcup_{j= 1}^p f(\I)$. From the definition of periodic shrinking set,    the period $p$ must divide $r$.
We denote by $\mbox{Shr}_q\mbox{Per}_{f}$ the set of $q$-shrinked periodic measures.

We say that an invariant measure $\mu$,  \em is $\e$-approached by $q$-shrinked  periodic measures \em if there exists $\nu \in \mbox{Shr}_q\mbox{Per}_{f}$ such that $d(\mu, \nu) < \e $. We denote by $\mbox{AShr}_{\e, q}\mbox{Per}_{f}$ the set of measures that are  $\e$-approached by $q$-shrinked periodic measures.

\end{definition}

\begin{theorem} \label{theoremBigcapClosShr_q&SRBl-l}
For any map $f \in \C$ (resp.\ $f \in \HM$)
$$  \bigcap_{\e >0}\bigcap_{q \geq1}\overline{\mbox{AShr}_{\e, q}\mbox{Per}}_f  \subset \O_f.$$

\end{theorem}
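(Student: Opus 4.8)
The plan is to show that any measure $\mu$ lying in $\bigcap_{\e>0}\bigcap_{q\ge1}\overline{\mathrm{AShr}_{\e,q}\mathrm{Per}}_f$ is pseudo-physical, by producing, for each $\e>0$, a periodic shrinking set of small diameter whose orbit carries both $\mu$ (approximately) and whose every point has empirical measures converging into a small neighborhood of $\mu$; the positive Lebesgue measure of that shrinking set then witnesses $\mu\in\O_f$. First I would fix $\e>0$ and choose $q\ge1$ as in Lemma \ref{lemma-dist}, so that any two probability measures satisfying the $q$-approach conditions are within $\e/3$ of each other. Since $\mu\in\overline{\mathrm{AShr}_{\e/3,q}\mathrm{Per}}_f$, there is $\nu\in\mathrm{Shr}_q\mathrm{Per}_f$ with $d(\mu,\nu)<2\e/3$; unwinding Definition \ref{definition-qShrinking}, $\nu$ is supported on $K=\bigcup_{j=0}^{p-1}f^j(\overline I)$ for some periodic shrinking set $I$ of period $p$ with $\mathrm{diam}(I)<1/q$, and by Lemma \ref{lemmaInvMeasOnShrinkingIntervals} we have $\nu(f^j(\overline I))=1/p$ for each $j$.

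Next I would examine an arbitrary point $x\in I$. Because $I$ is a periodic shrinking set, $f^p(\overline I)\subset I$ forces the whole forward orbit of $x$ to stay in $K$, and moreover $\mathrm{diam}(f^{j}(I))<\mathrm{diam}(I)$ controls how the orbit distributes among the pieces $f^j(\overline I)$: for $n=\ell p+s$ the point $f^n(x)$ lies in $f^s(\overline I)$, so the empirical average $\frac1n\sum_{i=0}^{n-1}\delta_{f^i(x)}$ assigns mass $(1/p)+O(1/n)$ to each $f^j(\overline I)$. Hence any measure in $p\omega(x)$ satisfies the $q$-approach conditions of Lemma \ref{lemma-dist} relative to $\nu$ (same diameters $<1/q$, same masses $1/p$ on the sets $f^j(\overline I)$, which are pairwise disjoint and connected), so $d(p\omega(x),\nu)<\e/3$ for every $x\in I$, and therefore $d(p\omega(x),\mu)<\e$ for every $x\in I$. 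Since $I$ is a nonempty open set, $Leb(I)>0$, so $A_\e(\mu)\supset I$ has positive Lebesgue measure.

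As $\e>0$ was arbitrary, $A_\e(\mu)$ has positive Lebesgue measure for all $\e>0$, which is exactly the definition of $\mu$ being pseudo-physical; thus $\mu\in\O_f$, completing the proof. I expect the only delicate point to be the bookkeeping in the second step: one must check that a convergent subsequence of empirical measures of a point $x\in I$ really does put mass exactly $1/p$ on each $f^j(\overline I)$ in the limit — this uses that the $f^j(\overline I)$ are closed with pairwise disjoint interiors and that a point cannot be ``lost'' between the pieces because $f^p(\overline I)\subset I\subset\bigcup_j f^j(\overline I)$, together with the portmanteau-type estimate already packaged in Lemma \ref{lemma-dist}. Everything else is a direct unwinding of the definitions of $q$-shrinked periodic measure, periodic shrinking set, and pseudo-physical measure, plus one application each of Lemma \ref{lemma-dist} and Lemma \ref{lemmaInvMeasOnShrinkingIntervals}. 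Note in particular that no typicality of $f$ is needed here — the statement is for \emph{any} $f\in\C$ — because the positive-measure witness comes from the open set $I$ itself, not from the a.e.\ shrinking property of Theorem \ref{theoremLebesgue-a-e-isInShrinkingInterval}.
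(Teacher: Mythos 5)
Your proof is correct and follows essentially the same route as the paper: fix $\e$, choose $q$ via Lemma \ref{lemma-dist}, pass from $\mu$ through an element of $\mbox{AShr}_{\cdot,q}\mbox{Per}_f$ to a $q$-shrinked periodic measure $\nu$ supported on the orbit $K$ of a periodic shrinking set $I$, show that $p\omega(x)$ lies $\e/3$-close to $\nu$ for every $x\in I$, and conclude that the open set $I$ lies in $A_\e(\mu)$. The one place you deviate is in justifying that every $\mu_x\in p\omega(x)$ assigns mass exactly $1/p$ to each $f^j(\I)$: you do this by explicit counting of the empirical orbit ($(1/p)+O(1/n)$ on each piece) followed by a portmanteau argument, which does require the extra observation that the upper bounds $\mu_x(f^j(\I))\ge 1/p$ sum to the total mass over pairwise disjoint sets, forcing equality. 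The paper sidesteps that delicacy by observing that every $\mu_x\in p\omega(x)$ is $f$-invariant and supported on $K$, so Lemma \ref{lemmaInvMeasOnShrinkingIntervals} gives $\mu_x(f^j(\I))=1/p$ immediately — the cleaner route, and in fact the one you gesture at in your closing paragraph. Both arguments are sound; the paper's is just more economical. Your observation that no typicality hypothesis is used here is correct and matches the statement.
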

\begin{proof}
Fix  $\e > 0$, and choose $q \geq 1$ as in Lemma \ref{lemma-dist},  such that $1/q < \e$.
For any $\mu_q  \in \mbox{AShr}_{1/q, q}\mbox{Per}_f $, denote by $\nu_q $ a measure in $ {\mbox{Shr}_q\mbox{Per}}_f$  such that $$d(\mu_q, \nu_q) < 1/q< \e.$$
Consider  the periodic shrinking set $I = I(\nu_q)$ and the compact set $K$ for $\nu_q$ from   Definition \ref{definition-qShrinking}. From the definition of periodic shrinking set,  any point $x \in I$ satisfies $f^n (x) \subset K= \bigcup_{j= 0}^{p-1} f^j(\I)$ for all $n \geq 0$. So, any  measure $\mu_x \in p\omega (x)$ is supported on $K$. Also $\nu_q$ is supported on $K$. Thus, applying Lemma \ref{lemmaInvMeasOnShrinkingIntervals}, we deduce that
$$\mu_x( f^j(\I)) = \nu_q(f^j(\I)) = \frac{1}{p} \ \ \forall \ 1 \leq j \leq p; \ \ \ \mbox{diam}(f^j(\I)) < \frac{1}{q}.$$
Now, from Lemma \ref{lemma-dist}, we obtain  $d(\nu_q, \mu_x) < \e$ for all $\mu_x \in p\omega(x)$, for all $x \in I$. Thus, for any $x \in I$
\begin{equation}
\label{eqn14}
d(p \omega(x), \nu_q)< \e,  \mbox{ hence } d(p \omega(x), \mu_q)< 2 \e.\end{equation}

Note that when we vary the value of $\e >0$, the value of $q$, and thus also the measures $\nu_q$ and $\mu_q$ and the set $I$, may change. So, from the above inequality we \em can not deduce \em that each $\mu_q$ is pseudo-physical. Nevertheless, we have proved that for any fixed value of  $\e> 0$ there exists $q \geq 1$ such that Inequality (\ref{eqn14}) holds  for all $\mu_q  \in \mbox{AShr}_{1/q, q}\mbox{Per}_f $.

Now, consider any measure $\mu'_q \in  \overline{\mbox{AShr}_{1/q, q}\mbox{Per}_f}$. Thus, there exists $\mu_q \in \mbox{AShr}_{1/q, q}\mbox{Per}_f$ such that
$$d(\mu'_q, \mu_q) < \e.$$
Combining this with (\ref{eqn14}) we deduce that,  for  all $\e >0$   there exists   $q \geq 1$ such that, for any measure $\mu'_q  \in \overline{\mbox{AShr}_{1/q, q}\mbox{Per}_f}$ there exists an open set $I$ (the periodic shrinking set $I(\nu_q)$ for the measure $\nu_q$ associated to $\mu'_q$)  such that
\begin{equation}
\label{eqn15}
d(p \omega(x), \mu'_q) < 3\e \ \ \ \forall \ x \in I.\end{equation}

So, if $\mu \in \bigcap_{\e >0}  \bigcap_{q \geq 1}\overline{\mbox{AShr}_{\e, q}\mbox{Per}_f}$, then,  for all $\e >0$ there exists an open set $I$ satisfying assertion (\ref{eqn15}). Thus  $\mbox{Leb}(A_{3\e}(\mu)) \ge \mbox{Leb}(I) >0$ for all $\e >0$; hence $\mu \in \O_f$, as wanted.
\end{proof}

The last ingredient of the proof of Theorem \ref{Theorem-main} is the following theorem.

\begin{theorem}
\label{TheoremSRB-l=ClosureErgodic}
For a typical map $f \in \C$ (resp.\ $f \in \HM$),
\begin{equation}
\label{eqn16}
\mbox{Per}_f \subset \bigcap_{q \geq1}\overline{\mbox{AShr}_{\e, q}\mbox{Per}_f} \ \ \forall \ \e>0.\end{equation}
\end{theorem}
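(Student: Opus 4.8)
The plan is to show that any measure supported on a single periodic orbit can, after a $C^0$-small perturbation of $f$, be pushed onto the orbit of an arbitrarily small periodic shrinking set, so that it becomes exactly a $q$-shrinked periodic measure for the perturbed map. Since $\mathrm{Per}_f$, the set of shrinking sets, and the weak$^*$ distance all behave continuously under $C^0$ perturbations, this will give density of $\mathrm{Shr}_q\mathrm{Per}$ in $\mathrm{Per}$ for the typical map. More precisely, fix $\e>0$ and $q\ge 1$. It suffices to find, for a typical $f$, a dense-in-$\mathrm{Per}_f$ — hence, by closure, all of $\mathrm{Per}_f$ — supply of measures that are $\e$-approached by $q$-shrinked periodic measures. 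The natural strategy is a Baire-category argument paralleling the proof of Theorem \ref{theoremLebesgue-a-e-isInShrinkingInterval}: define, for each periodic orbit datum, the set of maps $g$ for which that orbit lies in (the orbit of) a periodic shrinking set of diameter $<1/q$, show it is open and dense, and intersect.

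First I would set up the perturbation. Let $\nu = \frac1r\sum_{j=0}^{r-1}\delta_{f^j(x_0)}\in\mathrm{Per}_f$ be given, with $x_0$ of period $r$. Pick a tiny simplex $\bar I$ around $x_0$ of diameter $<1/q$ contained in a chart, chosen so that the sets $f^j(\bar I)$, $0\le j\le r-1$, remain pairwise disjoint (possible since $x_0$ has period exactly $r$ and $M$ is a manifold, so distinct orbit points have disjoint neighborhoods). Now perturb $f$ to $g$ supported near the orbit of $x_0$: using ``spherical coordinates'' on a slightly larger simplex around $x_0$ exactly as in the proof of Theorem \ref{theoremLebesgue-a-e-isInShrinkingInterval} (the map $h(s,\theta)=(s^n,\theta)$ conjugated by the chart), arrange that $g^r$ contracts $\bar I$ strictly into $I$ and that $\mathrm{diam}(g^j(I))<\mathrm{diam}(I)$ for $1\le j\le r-1$. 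The composition $g=f\circ h$ (or a localized version that equals $f$ away from a neighborhood of the orbit) satisfies $\rho(f,g)<\e$ for the right choice of $n$, and, when $f$ is a homeomorphism, the same estimate for inverses as in the earlier proof. By the Brouwer-fixed-point remark in Section \ref{sec3}, $g^r$ has a fixed point in $I$, giving a $g$-periodic point whose orbit lies in $K=\bigcup_{j}g^j(\bar I)$; the invariant measure on this orbit is a $q$-shrinked periodic measure $\nu_g$ for $g$. Since $\mathrm{diam}(g^j(\bar I))\le\mathrm{diam}(\bar I)<1/q$ and the orbit of $x_0$ is $\e$-close to this new orbit, Lemma \ref{lemma-dist} (or just the modulus-of-continuity estimate for a fixed finite family of test functions, as in Corollary \ref{corollaryOprocha&als}) gives $d(\nu,\nu_g)$ small, hence $\nu\in\mathrm{AShr}_{\e,q}\mathrm{Per}_g$.

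To turn this into a statement about a single typical $f$, I would run the genericity argument. For $q,k\ge 1$ let $\mathcal R_{q,k}$ be the set of $g\in\C$ (resp.\ $\HM$) such that every $g$-periodic measure whose orbit has ``combinatorial length'' at most $k$ is $(1/k)$-approached by a $q$-shrinked periodic measure of $g$. Openness follows because periodic orbits of bounded length, periodic shrinking sets, and the weak$^*$ distance all persist under small $C^0$ perturbations (the persistence of shrinking sets is noted explicitly in Section \ref{sec3}). Denseness is exactly the perturbation just described, applied orbit by orbit to the finitely many ``bounded-length'' periodic orbits one needs to handle. Then $\bigcap_{q,k}\mathcal R_{q,k}$ is residual, and for $f$ in it one checks that every $\mu\in\mathrm{Per}_f$ — being a limit of periodic measures of bounded length, or itself of bounded length — lies in $\overline{\mathrm{AShr}_{\e,q}\mathrm{Per}_f}$ for all $\e>0$ and all $q$. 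The main obstacle I anticipate is the bookkeeping in the denseness step: one must perturb near one periodic orbit without destroying control near others, and must ensure the disjointness $f^j(\bar I)\cap f^i(\bar I)=\emptyset$ for the chosen small simplex — this is where the hypothesis that $x_0$ has period \emph{exactly} $r$ and that $M$ is a manifold (Hausdorff, so orbit points are separated) is essential, and it is the analogue of the delicate choice of $\lambda_1,\lambda_2$ and of the points $x_i\in\mathrm{int}(\bar T_i)$ in the proof of Theorem \ref{theoremLebesgue-a-e-isInShrinkingInterval}.
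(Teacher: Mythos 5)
Your perturbation mechanism (radial contraction $h(s,\theta)=(s^k,\theta)$ composed with $f$ near periodic points) is the same device the paper uses, but the framing of the Baire-category step has a genuine gap. You define $\mathcal R_{q,k}$ as the set of $g$ for which \emph{every} $g$-periodic measure of combinatorial length at most $k$ is $1/k$-approached by a $q$-shrinked periodic measure, and assert that openness ``follows because periodic orbits of bounded length, periodic shrinking sets, and the weak$^*$ distance all persist under small $C^0$ perturbations.'' Persistence of existing data is not enough: the quantifier is universal over periodic orbits, and under a $C^0$ perturbation $g'$ of $g$ \emph{new} periodic orbits of bounded period can appear. You would need to guarantee that every such newborn orbit still lands $1/q$-close to some persisting shrinking set, and nothing in the proposal controls where the new orbits can live. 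Moreover the compact set $\{x: f^r x = x\}$ can be uncountable (this is $C^0$, not $C^1$), so the denseness step ``applied orbit by orbit to the finitely many bounded-length periodic orbits'' does not make sense as stated: there need not be finitely many orbits to handle, and placing shrinking simplexes around individually chosen periodic orbits will not control the rest of $Per(f,r)$.

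The paper resolves both problems with one structural idea you are missing: a \emph{good $q,r$-covering}, a finite family $\mathcal U_{q,r}$ of open simplexes of diameter $<1/q$ that covers the whole compact set $Per(f,r)$ and such that each member contains an $f$-periodic shrinking set of period dividing $r$. Openness of the set $\mathcal P_{q,r}$ of maps admitting such a covering then comes from a compactness argument: the complement $K=M\setminus\bigcup U_i$ is compact and disjoint from $\{f^r x=x\}$, so $\min_{x\in K}\mathrm{dist}(f^r x,x)>0$, and this strict gap persists under $C^0$ perturbation, forcing $Per(g,r)\subset\bigcup U_i$ for all $g$ near $f$ (while the finitely many shrinking sets persist as you note). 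This is exactly what rules out bad newborn periodic orbits. Denseness of $\mathcal P_{q,r}$ is then proved by triangulating $M$ finely enough, nudging the triangulation so that a finite selection of periodic points lies in simplex interiors together with their orbits, and applying the radial contraction near those finitely many orbits; the resulting inflated simplexes $\lambda T_i$ give the finite open cover of $Per(f,r)$. Without the covering and the compactness/gap argument, your openness claim for $\mathcal R_{q,k}$ does not go through, and the denseness step is not well posed when $Per(f,r)$ is infinite.
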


Before proving Theorem \ref{TheoremSRB-l=ClosureErgodic}, let us introduce the following definition:

\begin{definition}
\label{definitionP_q,r} \em

Fix  $q,r \in \N^+$.
  A \em good $q, r$-covering  \em $\mathcal U_{q,r}$ for $f \in \C$ (resp.\ $\HM$), is a finite family of \em open \em simplexes (i.e., the interiors of simplexes)  such that

(1)  $\mathcal U_{q,r}$ covers the compact set  $Per(f,r) \defeq \{x \in M: f^rx = x\}$.

(2)  $\mbox{diam}(U_i)< 1/q$ for any $U_i \in \mathcal U_{q,r}$.

(3) For any $U_i \in \mathcal U_{q,r}$, there exists a periodic shrinking set $I_i$, with period $p_i \leq r$, with $p_i$ that divides $r$, such that $\I_i \subset U_i$.

\vspace{.2cm}

We call a map $f \in \C$ (resp.\ $\HM$)  \em a good $q,r$-covered map,  \em  if  there exists a  good $q, r$-covering  $\mathcal U_{q,r}$ for $f$. We denote by ${\mathcal P}_{q,r} \subset \C$ (resp.\ $\HM$) the set of all good $q,r$-covered maps.
\end{definition}

\begin{proof}[Proof of Theorem \ref{TheoremSRB-l=ClosureErgodic}] We claim that, for fixed $q,r \geq 1$, the set ${\mathcal P}_{q,r}$  is open in $\C$.
Fix $f \in {\mathcal P}_{q,r} $, and denote its good $q,r$-covering by  $\mathcal U_{q,r} = \{U_1, U_2, \ldots, U_h\}$. The compact set $K= M \setminus \bigcup_{i= 1}^h U_i$ does not intersect the compact set $\{f^r(x) =x\}$. Let us prove that for all $g \in \C$ (resp.\ $\HM$) close enough to $f$, the same compact set $K$ (defined for the \em same \em covering ${\mathcal U}_{q,r}$) does not intersect   $\{g^r(x) = x\}$.  In fact,    the real function   $\phi_f(\cdot) : = \mbox {dist} (f^r(\cdot), \cdot) $  depends continuously on $f$. Since $\min _{x \in K} \phi_f(x) >0$, we deduce $\min _{x \in K} \phi_g(x) >0$ for all $g \in \C$ (resp.\ $\HM$) close enough to $f$.
In other words, ${\mathcal U}_{q,r}$ also covers the fixed points of $g^r$.  Thus, the good $q,r$-covering of $f$, is also a covering satisfying conditions (1) and (2) of Definition \ref{definitionP_q,r},  for any $g \in \C$ (resp.\ $\HM$) close enough to  $f$.
Now, let  us prove that Condition (3) for $g$  is satisfied   by the same   covering ${\mathcal U}_{q,r}$, provided that $g $ is close  enough to
 $f$.
Consider a $f$-shrinking periodic set $I_i \subset U_i \in  {\mathcal U}_{q,r}$, of period $p_i$.
Now  $I_i$ is a periodic shrinking set with the same period $p_i$  for all $g$ sufficient close  to $f$. Since the family $\{I_i\}_{1 \leq i \leq h}$ of shrinking periodic sets to be preserved is finite, we conclude that
(3) is also satisfied for any $g$ sufficiently close to $f$
and thus ${\mathcal P}_{q,r}$ is open in $\C$ (resp.\ $\HM$).

In Lemma \ref{lemmaP_qrIsDense}, we will  prove that  ${\mathcal P}_{q,r}$ is  dense in $\C$ (resp.\ $\HM$). Let us conclude the proof of Theorem \ref{TheoremSRB-l=ClosureErgodic} assuming that Lemma \ref{lemmaP_qrIsDense} is proven.
Observe that, for fixed $q,r \geq 1$, any $f \in {\mathcal P}_{q,r}$ has the following property:
any   point $x_0$ fixed by $f^r$ (in particular any periodic point $x_0$ of period $r$) is $(1/q)$-near all the points of a periodic shrinking set $I_0$ with diameter smaller than $1/q$, and with period $p_0  \leq r$, $p_0 $ dividing $r$.

Besides, any periodic shrinked set of period $p_0$ has at least one periodic point $y_0$, fixed by $f^{p_0}$. We deduce that $I_0 $, whose diameter is smaller than $1/q$, contains  a periodic point $y_0$. Using the  definition of the set of measures $\mbox{Shr}_q \mbox{Per}_f$, we summerize this assertion as follows:
 \begin{equation}
\label{eqn18} \mbox{for all }  \ x_0 \mbox{ with period } r, \ \mbox{ there exists }  \end{equation} $$
\nu_0 \defeq  \frac{1}{r} \sum_{j= 0}^{r-1} \delta_{f^j(y_0)} \in \mbox{Shr}_q \mbox{Per}_f,
\mbox{ with } \ \  dist(y_0, x_0) < 1/q.$$

For   $r \geq 1$ fixed, consider $f \in \bigcap_{q \geq 1} \mathcal P_{q,r}$.  Let $\mu_0 \defeq  \frac{1}{r} \sum_{j= 0}^{r-1} \delta_{f^j(x_0)}$.
Fix $\e > 0$ and choose $q' \ge 1$ as in Lemma \ref{lemma-dist}.
Since $f^j$ is continuous for each $j$, we can find a $q > q'  $ so that if $dist(x,y) < 1/q$, then $dist(\delta_{f^j x}, \delta_{ f^j y}) < 1/q'$ for all $j \in  \{0,1,\dots, r\}$.
Then Lemma \ref{lemma-dist}  implies that $d(\mu_0,\nu_0) < \e$.

We have shown that for any given periodic orbit $\{f^j(x_0)\}_{ 0 \leq j \leq r-1}$ of period $r$, the distance between the periodic measure  supported on it, and some measure $\nu_q \in \mbox{Shr}_q \mbox{Per}_f$, for all $q$ large enough, is smaller than $\e$.
  In other words, any periodic measure supported on a periodic orbit of period $r$, belongs to $\bigcap_{q \geq 1} \overline {\mbox{AShr}_{\e, q}\mbox{Per}_f}$ for all $\e>0$.

  Finally, if $f \in {\mathcal P}\defeq \bigcap_{r \geq 1} \bigcap_{q \geq 1}\mathcal P_{q,r}$, then all its periodic measures (supported on periodic orbits of any period $r$) will belong to $\overline{\mbox{AShr}_{\e, q}\mbox{Per}_f} $ for all $q \geq 1$ and for all $\e>0$. In brief, if $f \in   \mathcal P$, then
  $$\bigcap_{q \geq 1} \mbox{Per}_f \subset \overline{\mbox{AShr}_{\e, q}\mbox{Per}_f}  \ \ \ \forall \ \e > 0. $$

   As ${\mathcal P}_{q,r}$ is open and dense in $\C$ (resp.\ $\HM$), the maps $f \in \mathcal P$ are typical. This ends the proof of Theorem \ref{TheoremSRB-l=ClosureErgodic}, provided that Lemma \ref{lemmaP_qrIsDense} is proven.
\end{proof}
\begin{lemma}
\label{lemmaP_qrIsDense}

For each  $q,r \geq 1$, the set ${\mathcal P}_{q,r}$
is dense in $\C$ (resp.\ $\HM$).
\end{lemma}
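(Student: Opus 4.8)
The plan is to run the construction from the proof of Theorem~\ref{theoremLebesgue-a-e-isInShrinkingInterval}, but on a triangulation chosen so as to avoid the finitely many periodic orbits of low period. Fix $f\in\C$ (resp.\ $f\in\HM$) and $\e>0$. First replace $f$ by a perturbation $f_1$ with $\rho(f,f_1)<\e/4$ (resp.\ $\rho_H(f,f_1)<\e/4$) that is ``generic'': in the continuous case a non-degenerate piecewise-linear map whose iterates up to order $r$ have graphs transverse to the diagonal, and in the homeomorphism case a Kupka--Smale diffeomorphism. Then the set $P:=\bigcup_{1\le p\le r}\{x\in M:f_1^p(x)=x\}$ is finite and $f_1^{-1}(\partial\cT)$ is nowhere dense for every triangulation $\cT$. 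Now choose a triangulation $\cT=\{\T_1,\dots,\T_l\}$ so fine that (i) the simplexes $\T_i$ and their dilations $\lambda'\T_i$ about interior base points, with $\lambda'$ slightly larger than $1$, all have diameter $<1/q$; (ii) its simplexes are small enough relative to the modulus of uniform continuity of $f_1$; and (iii) $\partial\cT\cap P=\emptyset$ --- all compatible since $P$ is finite. Pick in each $T_i$ a point $x_i\in T_i\setminus f_1^{-1}(\partial\cT)$, so that $f_1(x_i)\in\mbox{int}(\T_{j(i)})$ for some index $j(i)$, and set $g:=f_1\circ h$, where $h$ is the spherical contraction $(s,\theta)\mapsto(s^n,\theta)$ of each $T_i$ onto its core $\lambda_1\T_i$ built exactly as in the proof of Theorem~\ref{theoremLebesgue-a-e-isInShrinkingInterval}, with $\lambda_1$ close to $1$, $\lambda_2$ close to $0$, and the exponent $n$ correspondingly large.

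By the estimates there, $\rho(f,g)<\e$ (resp.\ $\rho_H(f,g)<\e$), the map $g$ sends each $\lambda_1\T_i$ into the \emph{open} core $\lambda_1 T_{\sigma(i)}$ where $\sigma(i):=j(i)$, and for every $i$ lying on a cycle of $\sigma$ of length $p_i$ the core $\lambda_1 T_i$ is a periodic shrinking set of period $p_i$. The requirement ``$p_i\mid r$'' of Definition~\ref{definitionP_q,r}(3) is then automatic: if $x\in Per(g,r)$ lies in some $\lambda_1\T_i$, then $x=g^r(x)\in\lambda_1 T_{\sigma^r(i)}$, and since the open cores of distinct simplexes are disjoint this forces $\sigma^r(i)=i$; hence $p_i\mid r$ (so $p_i\le r$) and $\lambda_1 T_i$ is a periodic shrinking set of period $p_i\mid r$ lying inside $U_i:=\lambda'T_i$, a simplex of diameter $<1/q$. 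Thus $\mathcal U_{q,r}:=\{\lambda'T_i:\sigma^r(i)=i\}$ already satisfies conditions (2) and (3), and covers $Per(g,r)\cap\bigcup_i\lambda_1\T_i$.

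What remains --- and this is the heart of the matter --- is condition (1): one must show $Per(g,r)\subset\bigcup_i\lambda_1\T_i$, i.e.\ that $g^r$ has no periodic point of period $\le r$ in the interstitial set $B:=M\setminus\bigcup_i\lambda_1\T_i$, a regular neighbourhood of $\partial\cT$ together with the simplex collars. Here a real connectivity obstruction appears: $B$ is connected (for $m\ge2$) while $\bigcup_i\lambda_1\T_i$ is disconnected, so $g$ cannot push $B$ into the cores, and in fact $f_1(\partial\cT)$ necessarily re-meets $\partial\cT$, so orbits genuinely linger near $\partial\cT$. Two steps handle this. First, since $h$ is strongly contracting it carries everything in each simplex except a thin shell $B'$ of radial thickness $\sim 1-\lambda_1$ around $\partial\cT$ into $\lambda_2\T_i$, so $g$ maps $B\setminus B'$ into $\bigcup_i\lambda_1 T_i$; as a core is forward-invariant and disjoint from $B$, no periodic orbit of period $\le r$ can meet $B\setminus B'$, so $Per(g,r)\cap(B\setminus B')=\emptyset$. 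Second, one must rule out periodic orbits of period $\le r$ inside $B'$; I would attempt this by a compactness argument, transporting such an orbit by $h$ to a periodic orbit of $h\circ f_1$ of period $\le r$ confined to a neighbourhood of $\partial\cT$ that shrinks onto $\partial\cT$ as $\lambda_1\uparrow1$ (with $n$ growing accordingly), and then, using $h|_{\partial\cT}=\mbox{id}$, the continuity of $f_1$, and a subsequential limit, aiming to extract a periodic orbit of $f_1$ of period $\le r$ lying on $\partial\cT$, which contradicts (iii). Granting this, for $\lambda_1$ (and $n$) large enough $Per(g,r)\subset\bigcup_i\lambda_1\T_i$, so the family $\mathcal U_{q,r}$ of the previous paragraph witnesses $g\in{\mathcal P}_{q,r}$; this gives the density of ${\mathcal P}_{q,r}$, the homeomorphism case being identical with $f_1$ a Kupka--Smale diffeomorphism. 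The step I expect to be the main obstacle is precisely this last argument controlling periodic points in the thin interstitial shell $B'$, because $h$ is badly non-uniform near $\partial\cT$ --- it stretches radially by a factor $\sim n$ --- so the limit $\lambda_1\uparrow1$ must be taken keeping careful track of the interplay between $n$, the thickness of $B'$, and this stretching.
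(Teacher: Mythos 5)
Your construction follows the proof of Theorem~\ref{theoremLebesgue-a-e-isInShrinkingInterval}, i.e.\ you post-compose a perturbed $f_1$ with the \emph{global} radial contraction $h$ acting on every simplex of the triangulation. That is not what the paper does, and the difference is exactly where your proof gets stuck. The paper's perturbation is \emph{local}: after refining the triangulation so that the orbits of the chosen periodic points $x_i\in T_i\cap Per(f,r)$ avoid $\partial\cT$, the homeomorphism $h$ is the identity everywhere except inside a finite union of tiny balls $\mathbf B(x,\eta)$ centered at the points of $F^r=\bigcup_{j<r}f^j(F)$, where it radially contracts towards $x$. Consequently $g=f\circ h$ agrees with $f$ outside $\bigcup_x\mathbf B(x,\eta)$, the old periodic points $x_i$ remain fixed by $g^r$, and one only has to manufacture shrinking simplexes nested inside those small balls. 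There is no interstitial set to worry about, because $\rho(g,f)$ can be made as small as one wishes (uniformly in the contraction exponent $k$) simply by shrinking $\eta$; then, since $\{\lambda T_i\}$ is an open cover of the compact set $Per(f,r)$ and $y\mapsto\mbox{dist}(f^r(y),y)$ is bounded away from $0$ on the compact complement $M\setminus\bigcup_i\lambda T_i$, the same holds for $g$, so $Per(g,r)\subset\bigcup_i\lambda T_i$ automatically. Moreover the divisibility requirement $p_i\mid r$ in Definition~\ref{definitionP_q,r}(3) is free in the paper's construction, because the shrinking set around $x_i$ has exactly the period of $x_i$, which already divides $r$ by the choice $x_i\in Per(f,r)$; there is no need for your combinatorial $\sigma^r(i)=i$ argument.

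The gap you yourself flag is real and, as far as I can see, fatal to the global-contraction route. With the global $h$ the perturbation has size comparable to $\e$ on the whole manifold, not size $o(1)$, so the compactness argument of the previous paragraph is unavailable; and the proposed limiting argument ($\lambda_1\uparrow1$, $n\to\infty$) does not control periodic orbits in the thin shell $B'$, because the return dynamics there involve the full nonlinearity of $h$, whose radial derivative blows up as $n\to\infty$. A periodic orbit of $g$ confined to $B'$ need not converge to a periodic orbit of $f_1$ on $\partial\cT$ of period $\le r$ — the period and the itinerary can degenerate in the limit. You have, in effect, rediscovered precisely the difficulty that the paper's localized construction is designed to sidestep; the fix is not to strengthen the shell estimate but to change the perturbation so that it is supported away from $\partial\cT$ and near the existing periodic orbits, which is what the paper does.
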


\begin{proof}   We  will use the notation that we defined in Subsection \ref{sec-setup}. Fix $f \in \C$ (resp.\ $\HM$) and $\e>0$.
Consider $0<\delta \defeq \frac{1}{q'}< \min\Big \{\frac{1}{q}, \ \frac{\e}{2}  \Big \}$  such that $dist(x,y) < \delta \ $ implies $  dist(f(x) , f(y))< \e$.
We wish to $\e$-perturb $f$  into new map $g \in {\mathcal P}_{q,r}$.
Consider a triangulation $\mathcal T := \{\overline T_1,\dots,\overline T_h \}$ of $M$ such that the diameters of all the simplexes $\overline T_i$ are at most $\delta/3$.
By modifying $\mathcal T$ we can suppose that if  $\T_i \cap Per(f,r) \ne \emptyset$ then  $T_i \cap Per(f,r) \ne \emptyset$,  where $T_i = \mbox{int}(\overline T_i)$; and furthermore, for some
$x_i  \in T_i \cap Per(f,r)$   its orbit $\{x_i, f(x_i), \dots, f^{r-1}(x_i) \} \cap \partial \mathcal T = \emptyset$.
 Indeed, suppose some $\overline T_i$ does not satisfy these conditions.
 If no point of $\{x : f^r x = x  \}$ is in the interior of $\overline T_i$ but some is in $\partial T_i$, then we can modify the triangulation by first slightly moving one corner of $\overline T_i$ so that in the resulting triangulation  some point $x_i \in Per(f,r)$ belongs to the interior $T_i$, and all the other simplexes of $\mathcal T$ which satisfied this condition before,  still satisfy it.

If the orbit of $x_i$ does not lay in the interior of the simplexes,   let $1 \le  j \le r-1$ be the first time that $f^j(x_i) \in \partial \mathcal T$.  Again
we modify the triangulation by slightly moving one corner of the triangulation  so that in the modified triangulation $f^j(x_i) \not \in \partial \mathcal T$,
and all the other simplexes of $\mathcal T$ which satisfied this condition before,  still satisfy it.
  Repeat this procedure a finite number of times to
produce the desired triangulation. We can assume that the perturbations are so small that in  the resulting triangulation the  diameters of all the simplexes are at most $\delta/2$.

Let $\mathcal T^{(1)} \defeq  \{\overline T_1^{(1)}, \dots, \overline T_{l}^{(1)}  \} \defeq \{\overline T_i \in  \mathcal T:  {T_i }\cap Per(f,r) \ne \emptyset \} $  and for each $i$ let $x_i$ be a point in $T_i^{(1)} \cap Per(f,r)$. Using $x_i$ as a centroid, consider a real number $\lambda_{\overline T_i^{(1)}} >1$ near enough 1, such that $\lambda_{\overline T_i^{(1)}}{\overline T_i^{(1)}}$ is well defined.
Let $\lambda_{\mathcal T} \defeq \min \{ \lambda_{T_i^{(1)}}: 1 \le i \le l\}$.
For $\lambda \in (1, \lambda_{\mathcal T})$ we have $x_i \in T_i \subset \T_i  \subset \lambda T_i$, and  $  \{\lambda T_1,\dots,\lambda T_l \}$ is an open cover of $Per(f,r)$.
We choose   such a value of  $\lambda_{max} \in (1,\lambda_{\mathcal T})$, so close to $1$ such that besides, each of the points $x_i$ does not lie inside $\la \T_j$ for any $j \ne i$, and furthermore such that the diameters of the $\lambda T_i$ are at most $\delta$.
For the rest of the proof we fix such a value of $\lambda \in (1,\lambda_{\mathcal T})$.

Denote    $$F : = \{x_{i}: 1 \leq i \leq l\}, \ \ \ \ \ F^r \defeq \bigcup_{j= 0}^{r-1} f^j(F).$$
Hence $F$ is consisits of exactly  $l$ different points that are fixed by $f^r$, and $F^r$  consists of at most $r l$ different points, also fixed by $f^r$.

We will define an homeomorphism $h: M \mapsto M$.
Consider a chart $(U_\alpha,\phi_\alpha)$ such that $\lambda \T_i \subset U_\alpha$.
Choose $\eta > 0$ so small such that for each $x \in F^r$ the solid ball $B(\phi_\alpha(x),\eta) \subset \mathbb R^m$ does not intersect $\phi_\alpha(U_\alpha \cap  \partial (\lambda \mathcal T))$, such  that the diameter of  $\mathbf B(x,\eta) := \phi^{-1} (B(x,\eta))$ is at most $\delta$, and such that the finite family $\{\mathbf B(x,\eta): \ x \in F^r\}$ is composed of pairwise disjoint sets.
We define  $h(y) = y$ if $y \not \in \bigcup_{x \in F^r} \mathbf B(x,\eta))$.
On the complement we define $h$ as follows. Fix $x \in F^r$, and
 consider polar/spherical  coordinates $(s,\theta): s \in [0,1], \theta \in \mathbb S^m$ to describe  the ball $B(x,\eta)$.   Finally
  use $\phi_\alpha^{-1}$ to pull back these
coordinates to $\mathbf B(x,\eta)$.

Fix a certain $k \ge 1$ sufficiently large (that will be chosen later), and define
$h : \mathbf B(x,\eta) \mapsto \mathbf B(x,\eta)$  by setting  $h(s,\theta) \defeq (s^{k},\theta)$. This construction for each point $x \in F^r$ completes the definition of the homeomorphism $h$.  Thus if $f$ is continuous then so is $$g: f \circ h,$$ and if $f$ is an homeomorphism, then $g$ is also an homeomorphism.

Note that for any $y \in M$ the map $h$ satisfies
$$dist(y,h(y)) < \delta \quad \mbox{ and } \quad dist(y,h^{-1}(y)) < \delta.$$
Since $g = f \circ h$ and $dist(h(y),y) <   \delta$ we have
$$\rho(f,g) < \e.$$
If besides $f$ is an homeomorphism, then $g^{-1} = h^{-1} \circ f^{-1}$ and
$$dist(g^{-1}(y),f^{-1}(y))  = dist(h^{-1}(z),z) < \delta < \e.$$
Thus
$$\rho(g^{-1} , f^{-1}) < \e.$$
We have proved that $g$ is an $\e$-perturbation of $f$ in $\C$ (resp.\ $\HM$). Now, to end the proof of the lemma, it is enough to  choose $k$ such that the set $\mathbf B(x,\eta)$ contains a periodic shrinking set of period that divides $r$ for each point $x$ of the finite set $  F^r$.

From the above construction, we have $h(0, \theta)= 0$. Therefore, $g(x) = g(0,\theta) = f(0,\theta) = f(x)$ for all $x \in F^r$; thus  each point $x$ of $F^r$  will
be fixed by the map $g^r$.

We claim that if we choose $k$ large enough, then $g$ has a shrinking set around the point $x \in T_i \cap F^r$ contained in $\mathbf B(x,\eta)$.
We consider the $(s,\theta)$ coordinates in the set  $\mathbf B(x,\eta)$.
Fix $\eta_1 \in (0,\eta)$. For each  $j=0,1,\dots,r-1$ choose a simplex $\overline I_j$ containing $f^j(x)$ such that
 $\overline I_j \subset \mathbf B(f^j(x),\eta_1)$.
 Next choose  $\eta_2 \in (0, \eta_1)$ small enough such that $f(\mathbf B(f^jx,\eta_2)) \subset I_{j+1}$ for $j=0,1,\dots,r-1$.
 In these coordinates $\mathbf B(x,\eta_i)$ is given by $\{(s,\theta): s \in [0,\eta_i]\}$.
Choose $k > 1$ so that $\eta_1^k < \eta_2$. For each $j = 1,\dots,r$ we have
 $$g(\overline I_j) \subset g(\mathbf B(f^jx,\eta_1)) = f \circ h (\mathbf B(f^jx,\eta_1)) \subset f(B^jx,\eta_2) \subset I_{j+1}.$$
 Thus the simplexes $\overline I_j$ are the desired shrinking sets.
This finishes the proof of Lemma \ref{lemmaP_qrIsDense}.
\end{proof}

\subsection{End of the proof of Theorem \ref{Theorem-main}}

\begin{proof}

In Corollary \ref{corollaryShrink} we have proved that
 $\O_f   \subset \overline{{\mathcal E}_f} =  \overline{\mbox{Per}}_f.$
Combining Theorems   \ref{theoremBigcapClosShr_q&SRBl-l}  and \ref{TheoremSRB-l=ClosureErgodic}, we deduce that
\begin{equation}
\nonumber \mbox{Per}_f \subset \bigcap_{\e >0}\bigcap_{q \geq1}\overline{\mbox{AShr}_{\e, q}\mbox{Per}_f} \subset  \mathcal O_f. \end{equation}
Since $\mathcal O_f$ is closed,   we conclude that ${\mathcal O}_f = \overline{{\mathcal E}_f} =  \overline{\mbox{Per}}_f $, as wanted.
\end{proof}

{\footnotesize }
\end{document}